\documentclass[12pt,a4paper, oneside,reqno]{article}
\usepackage[utf8]{inputenc}
\usepackage[english]{babel}
\usepackage[left=1in,right=1in,top=1in,bottom=1in]{geometry}
\usepackage{amsfonts,amssymb,amsmath,amsthm}

\usepackage[symbol]{footmisc} 
\usepackage{longtable}
\usepackage{tasks}
\usepackage{enumerate}
\usepackage{cite}
\usepackage{hyperref}
\usepackage{verbatim}
\usepackage{tkz-graph}
\usepackage{fancyhdr} 
\theoremstyle{plain}
\newtheorem{theorem}{Theorem}
\newtheorem{lemma}[theorem]{Lemma}
\newtheorem{proposition}[theorem]{Proposition}
\newtheorem{remark}[theorem]{Remark}

\newtheorem{definition}[theorem]{Definition}
\newtheorem{corollary}[theorem]{Corollary}
\usetikzlibrary{arrows}
\newtheorem*{theoremM}{Main Theorem}

 \usepackage{fouriernc} 

\newcommand{\cS}{{S}}
\newcommand{\cB}{\mathcal{B}}
\newcommand{\Der}{\mathfrak{Der}}
\newcommand{\QDer}{{\rm Q}\mathfrak{Der}}
\newcommand{\GDer}{{\rm G}\mathfrak{Der}}
\newcommand{\Inn}{\operatorname{Inn}}
\newcommand{\ad}{\operatorname{ad}}
\newcommand{\id}{\mathrm{id}}
\newcommand{\hg}{\operatorname{hg}}
\newcommand{\Ann}{\operatorname{Ann}}
\newcommand{\Th}{\Theta}

\begin{document}

\noindent{\large
Derivations and quasi-derivations of Block Lie superalgebras $S(q)$}\footnote{The authors thank the SRMC (Sino-Russian Mathematics Center at Peking University, Beijing, China) for its hospitality and excellent working conditions, where some parts of this work were done. 
The work is supported by FCT 2023.08031.CEECIND and UID/00212/2025.}  
\footnote{The present version of the paper was checked in the proof assistant Lean~4 (with the Mathlib library) with the help of Aristotle (\url{https://aristotle.harmonic.fun/}). }

\bigskip

\bigskip

\begin{center}	
	{\bf
		Ivan Kaygorodov\footnote{CMA-UBI, University of  Beira Interior,  Portugal;  \    kaygorodov.ivan@gmail.com},
   Abror Khudoyberdiyev\footnote{V.I.Romanovskiy Institute of Mathematics, Uzbekistan Academy of Sciences; National University of Uzbekistan, Uzbekistan; \ 
khabror@mail.ru} \&
Zarina Shermatova\footnote{V.I.Romanovskiy Institute of Mathematics, Uzbekistan Academy of Sciences; Kimyo International University in Tashkent, Uzbekistan;  \ 
zarina\_shermatova91@mail.ru}}   
\end{center}

\bigskip

\bigskip

\noindent {\bf Abstract.}
{\it
We compute the homogeneous derivations and the homogeneous quasi-derivations of the Block Lie superalgebras $S(q)$,
whose even part is the Block Lie algebra $\mathcal{B}(q)$.
For every $q\in\mathbb C$, the space $\mathfrak{Der}_\theta(S(q))$ is the direct sum of the inner derivations and an explicitly given
finite-dimensional space of outer derivations; in particular, for $q\neq0$ a new odd outer derivation appears if and only if $q\in2\mathbb Z^*$.
For $q\in\mathbb C^*$ we prove that $\mathfrak{QDer}(S(q))=\mathfrak{Der}(S(q))\oplus\langle \id\rangle$ if $q\notin\mathbb Z$, and that
for $q\in\mathbb Z^{*}$ the superalgebra $S(q)$ has an infinite-dimensional space of nontrivial quasi-derivations,
namely all graded linear maps with values in the center.
For $q=0$ we also describe all homogeneous quasi-derivations: modulo maps with values in the center, they are spanned by
$\id$, three further maps and infinite families of maps shifting the $\mathbb Z^2$-grading; in particular,
$\mathfrak{QDer}(S(0))/\mathfrak{Der}(S(0))$ is infinite-dimensional.
As corollaries, we obtain the complete description of homogeneous $\delta$-derivations of $S(q)$, $q\neq0$, and prove that,
in contrast with the Block Lie algebras $\mathcal{B}(q)$, the superalgebras $S(q)$, $q\neq0$, do not admit nontrivial transposed
$\delta$-Poisson superalgebra structures for any $\delta\notin\{0,1\}$.}

\bigskip

\noindent {\bf Keywords}:
{\it Lie superalgebra, Block superalgebra, derivation, quasi-derivation, $\delta$-derivation, transposed $\delta$-Poisson superalgebra.}

\bigskip

\noindent {\bf MSC2020}: 17B68 (primary); 17A70, 17B40, 17B63, 17B70 (secondary).

\tableofcontents

\section*{Introduction}

Generalized derivations and quasi-derivations of Lie algebras were introduced by Leger and Luks \cite{LL}.
They generalize both derivations and Filippov's $\delta$-derivations \cite{f98,f95,zz}.
The description of $\delta$-derivations (and, more generally, of quasi-derivations) gives a constructive way to classify
transposed $\delta$-Poisson structures on a given Lie algebra \cite{delta,deltu,FKL}; for $\delta=\frac12$ these are the transposed Poisson structures.
About recent studies of transposed Poisson structures, see \cite{kkz1,kkz4,aae23,LS26,kms,S24} or the survey \cite{k23}. 
Recently, quasi-derivations of the Witt algebra and of the algebras $\mathcal W(a,b)$ \cite{kkz2},
of the deformed Heisenberg--Virasoro algebra \cite{LD26}, and of the $n$-th Schr\"odinger superalgebra \cite{WW26}
were described, and the quasi-derivations of the Block Lie algebras $\cB(q)$ were computed in \cite{kkz3}.
In the last case, $\cB(q)$ has a unique (up to isomorphism) nontrivial transposed $\delta$-Poisson structure when
$q\in\mathbb Z$; it comes from the $\delta$-derivation $D_5\colon L_{0,-2q}\mapsto L_{0,-q}$.

\medskip

Block type algebras were introduced by Block \cite{block58} and studied in many papers
\cite{dz96,oz99,xu99,xyz12,cgz14,gwl21,suxia20}.
Derivations of generalized Block algebras were described by \DJ okovi\'c and Zhao \cite{dz96}, and biderivations of the Block Lie algebras
by Liu, Guo and Zhao \cite{lgz18}.
Xia \cite{xia16} introduced two classes of Lie superalgebras of Block type and studied, in particular, their derivations;
for instance, it was shown there that every derivation of the Lie superalgebra of Block type considered in \cite{xia16} is the sum of an inner derivation
and a multiple of the outer derivation $d$ of degree zero given by $d\colon L_{m,i}\mapsto iL_{m,i}$, $G_{\alpha,j}\mapsto jG_{\alpha,j}$ (in the notation of \cite{xia16}).
In the present paper we consider the Lie superalgebra $\cS(q)$ whose even part is the Block algebra $\cB(q)$ of Xia, You and Zhou \cite{xyz12},
with arbitrary $q\in\mathbb C$ and basis indexed by the whole lattice $\mathbb Z\times\mathbb Z$.
Transposed Poisson structures on Block Lie algebras and superalgebras were studied in \cite{kk22,HGS25}.

\medskip

Our main results are the following.
\begin{itemize}
\item For every $q\in\mathbb C$ we describe all homogeneous derivations of $\cS(q)$ (Theorem \ref{thm:main}).
The outer part $\Der(\cS(q))/\Inn(\cS(q))$ has dimension $1$ if $q\notin\mathbb Z$, $3$ if $q$ is an odd integer,
$4$ if $q\in2\mathbb Z^*$ and $10$ if $q=0$. In particular, the derivation $D_5$ of $\cB(q)$ does not extend to $\cS(q)$ for $q\in\mathbb Z^*$, while
for $q\in2\mathbb Z^*$ a new odd derivation $D_6\colon G_{0,-\frac{3q}{2}}\mapsto L_{0,-q}$ appears.
\item For $q\neq0$ we describe all homogeneous quasi-derivations of $\cS(q)$ (Theorems \ref{thm:qnotZ} and \ref{thm:qZ}):
every quasi-derivation is the sum of a derivation, a multiple of the identity map and a graded map with values in the center.
\item For $q=0$ we describe all homogeneous quasi-derivations of $\cS(0)$ (Theorem \ref{thm:q0}):
\begin{center}
$\QDer_{\bar0}(\cS(0))=\mathcal V_{\bar0}\oplus\langle\id,T\rangle\oplus\Ann^Q_{\bar0}(\cS(0))$
\end{center} and
\begin{center}$\QDer_{\bar1}(\cS(0))=\mathcal V_{\bar1}\oplus\langle\Th,\Th'\rangle\oplus\Ann^Q_{\bar1}(\cS(0))$, where $\mathcal V_{\bar0}$
\end{center} is spanned by the maps
$V_{\beta,\kappa,\lambda}\colon X_{m,i}\mapsto(\kappa m+\lambda i)X_{m+\beta_1,i+\beta_2}$, $\mathcal V_{\bar1}$ is spanned by the maps
$V^o_{\beta,\kappa,\lambda}\colon L_{m,i}\mapsto(\kappa m+\lambda i)G_{m+\beta_1,i+\beta_2}$, $G_{m,i}\mapsto0$, and $\Ann^Q_\theta(\cS(0))$ consists of the maps with values in the center.
In particular, $\QDer(\cS(0))/\Der(\cS(0))$ is infinite-dimensional.
\item We describe all homogeneous $\delta$-derivations of $\cS(q)$, $q\neq 0$, $\delta\notin\{0,1\}$ (Theorem \ref{thm:delta}).
Nontrivial homogeneous $\delta$-derivations exist only if $q\in2\mathbb Z^{*}$; they are the nonzero multiples of the odd derivation $D_6$
(for $\delta=\frac12$ the nontrivial $\frac12$-derivations are the maps $l\,\id+c\,D_6$ with $c\neq0$).
\item $\cS(q)$, $q\neq 0$, admits no nontrivial transposed $\delta$-Poisson superalgebra structures for $\delta\notin\{0,1\}$
(Theorem \ref{thm:tdp}). The reason is that the element $L_{0,-2q}$, which is responsible for the transposed $\delta$-Poisson structure on $\cB(q)$,
lies in the derived superalgebra of $\cS(q)$: $L_{0,-2q}=\frac1{2q}[G_{0,-2q},G_{0,0}]$.
\end{itemize}

\section{Preliminaries}

In this section, we recall some definitions and known results for studying derivations and quasi-derivations of Lie superalgebras.
All algebras and vector spaces are considered over the complex field.
For a set of vectors ${\rm S}$ we denote its linear span by $\langle {\rm S}\rangle$.
By $\delta_{i,j}$ we denote the standard Kronecker symbol.
For a superalgebra $L=L_{\bar0}\oplus L_{\bar1}$ we denote by $\hg(L)$ the set of homogeneous elements; if $|x|$ appears in a formula,
$x$ is assumed to be homogeneous of degree $|x|\in\mathbb Z_2$.
The letters $\theta,\mu$ denote elements of $\mathbb Z_2$.

\subsection{\normalsize Main definitions}

Let $\mathfrak{gl}_\theta(L)=\{D\in{\rm End}(L) : D(L_\mu)\subseteq L_{\mu+\theta}\ \text{for all}\ \mu\in\mathbb Z_2\}$.
Then $\mathfrak{gl}(L)=\mathfrak{gl}_{\bar0}(L)\oplus\mathfrak{gl}_{\bar1}(L)$ is a Lie superalgebra with the multiplication 
$[D_\theta,D_\mu]=D_\theta D_\mu-(-1)^{\theta\mu}D_\mu D_\theta$.

\begin{definition}
Let $L$ be a Lie superalgebra and $\delta\in\mathbb C$. An element $D\in\mathfrak{gl}_\theta(L)$ is called
\begin{itemize}
\item a \textit{homogeneous derivation of degree $\theta$} if
\begin{equation}\label{der}
D([x,y])=[D(x),y]+(-1)^{\theta|x|}[x,D(y)];
\end{equation}
\item a \textit{homogeneous generalized derivation of degree $\theta$} if there exist $D',D''\in\mathfrak{gl}_\theta(L)$ such that
$[D(x),y]+(-1)^{\theta|x|}[x,D'(y)]=D''([x,y])$;
\item a \textit{homogeneous quasi-derivation of degree $\theta$} if there exists $D'\in\mathfrak{gl}_\theta(L)$ such that
\begin{equation}\label{qder}
[D(x),y]+(-1)^{\theta|x|}[x,D(y)]=D'([x,y]);
\end{equation}
\item a \textit{homogeneous $\delta$-derivation of degree $\theta$} if $D([x,y])=\delta\big([D(x),y]+(-1)^{\theta|x|}[x,D(y)]\big)$,
\end{itemize}
where the identities are required for all $x\in\hg(L)$ and $y\in L$.
\end{definition}

We denote the corresponding spaces by $\Der_\theta(L)$, $\GDer_\theta(L)$, $\QDer_\theta(L)$ and $\Der_{\delta,\theta}(L)$, and put
$\Der(L)=\Der_{\bar0}(L)\oplus\Der_{\bar1}(L)$, etc. By $\Inn(L)=\ad(L)=\{\ad_x : x\in L\}$ we denote the inner derivations, $\ad_x(y)=[x,y]$.
If $\delta\neq 0$ and $D$ is a $\delta$-derivation, then $D$ is a quasi-derivation with $D'=\delta^{-1}D$.
Hence
$$\Inn(L)\subseteq\Der(L)\subseteq\QDer(L)\subseteq\GDer(L)\subseteq\mathfrak{gl}(L),\qquad
\Der_{\delta}(L)\subseteq\QDer(L)\ (\delta\neq0).$$
The identity map is a quasi-derivation with $D'=2\,\id$, and it is a $\frac12$-derivation.
If $Z(L)$ is the center of $L$, then every map in
$$\Ann^Q(L):=\big\{g\in \mathfrak{gl}_{\bar0}(L)\cup\mathfrak{gl}_{\bar1}(L) : g(L)\subseteq Z(L)\big\}$$
is a quasi-derivation with $D'=0$, since both sides of \eqref{qder} vanish.

\begin{definition}[see \cite{delta,deltu}]
Let $\delta\in\mathbb C$. A \textit{transposed $\delta$-Poisson superalgebra} is a triple $(P,\cdot,[\cdot,\cdot])$, where
$(P,[\cdot,\cdot])$ is a Lie superalgebra, $(P,\cdot)$ is a supercommutative associative superalgebra
(that is, $P_\alpha\cdot P_\beta\subseteq P_{\alpha+\beta}$ and $x\cdot y=(-1)^{|x||y|}y\cdot x$), and
\begin{equation}\label{tdp}
z\cdot[x,y]=\delta\big([z\cdot x,y]+(-1)^{|z||x|}[x,z\cdot y]\big).
\end{equation}
A Lie superalgebra $(L,[\cdot,\cdot])$ has a \textit{nontrivial} transposed $\delta$-Poisson structure if there is a nonzero product $\cdot$
such that $(L,\cdot,[\cdot,\cdot])$ is a transposed $\delta$-Poisson superalgebra.
\end{definition}

\begin{lemma}\label{lem:Lz}
Let $\cdot$ be a graded supercommutative product on a Lie superalgebra $L$ satisfying \eqref{tdp}, and let $z\in L_\theta$.
Then the left multiplication $\mathrm L_z\colon x\mapsto z\cdot x$ is a homogeneous $\delta$-derivation of degree $\theta$.
\end{lemma}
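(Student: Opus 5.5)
The claim follows almost directly from the defining identity \eqref{tdp}, so I would simply check the definition of a homogeneous $\delta$-derivation of degree $\theta$ for the map $\mathrm L_z$.

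First, $\mathrm L_z$ lies in $\mathfrak{gl}_\theta(L)$. The product $\cdot$ is graded, so $P_\alpha\cdot P_\beta\subseteq P_{\alpha+\beta}$. Hence for $x\in L_\mu$ we get $\mathrm L_z(x)=z\cdot x\in L_{\mu+\theta}$. The map is also linear, since $\cdot$ is bilinear.

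Second, fix $x\in\hg(L)$ and $y\in L$. Identity \eqref{tdp}, applied to the triple $(z,x,y)$, reads
$$\mathrm L_z([x,y])=z\cdot[x,y]=\delta\big([z\cdot x,y]+(-1)^{|z||x|}[x,z\cdot y]\big)=\delta\big([\mathrm L_z(x),y]+(-1)^{\theta|x|}[x,\mathrm L_z(y)]\big),$$
where $|z|=\theta$. This is exactly the identity required of a homogeneous $\delta$-derivation of degree $\theta$.

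The only point that needs care is that \eqref{tdp} may be stated for homogeneous $y$, while the definition asks for all $y\in L$. I would handle this by decomposing $y=y_{\bar0}+y_{\bar1}$ and using linearity in $y$ of both sides: $\mathrm L_z$ is linear and the bracket is bilinear. There is no real obstacle beyond matching the sign $(-1)^{|z||x|}$ with $(-1)^{\theta|x|}$.
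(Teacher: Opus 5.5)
Your proposal is correct and follows the same route as the paper: gradedness of the product gives $\mathrm L_z\in\mathfrak{gl}_\theta(L)$, and identity \eqref{tdp} with $|z|=\theta$ is literally the $\delta$-derivation identity for $\mathrm L_z$. Your extra remark that non-homogeneous $y$ is handled by linearity is a harmless refinement that the paper leaves implicit.
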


\begin{proof}
Since the product is graded, $\mathrm L_z\in\mathfrak{gl}_\theta(L)$, and \eqref{tdp} is exactly the $\delta$-derivation identity for $\mathrm L_z$.
\end{proof}

\subsection{\normalsize Block Lie algebras $\cB(q)$
and Block Lie superalgebras $\cS(q)$}

Xia, You, and Zhou in \cite{xyz12} defined the Block algebra $\cB(q)$ for a fixed complex number $q\in\mathbb C$ as an algebra with the basis
$\big\{L_{m,i}\mid m,i\in\mathbb Z\big\}$ and the following multiplication table
\begin{center}
$[L_{m,i},L_{n,j}]\ =\ \big(n(i+q)-m(j+q)\big)L_{m+n,i+j},\quad i,j,m,n\in\mathbb Z.$
\end{center}
They characterized the automorphism group and the derivation algebra of $\cB(q)$ for any $q\in\mathbb C$.

\begin{proposition}[see \cite{lgz18}]\label{prop:derB}
For the algebra $\cB(q)$ we have
$$\Der\big(\cB(q)\big)=\begin{cases}
\Inn\big(\cB(q)\big)\oplus\langle D_1,\ D_2,\ D_5\rangle, & \text{if}\quad q=0,\\
\Inn\big(\cB(q)\big)\oplus\langle D_1,\ D_3,\ D_4,\ D_5\rangle, & \text{if}\quad q\in\mathbb Z^*,\\
\Inn\big(\cB(q)\big)\oplus\langle D_1\rangle, & \text{if}\quad q\notin\mathbb Z,
\end{cases}$$
where
\begin{longtable}{lll lll lll}
$D_1(L_{m,i})$&$=$&$i L_{m,i},$ &
$D_2(L_{m,i})$&$=$&$m L_{m,i},$ &
$D_3(L_{m,i})$&$=$&$m L_{m,i-q},$\\
$D_4(L_{m,i})$&$=$&$(q+i)L_{m,i-q},$ &
$D_5(L_{0,-2q})$&$=$&$L_{0,-q}.$
\end{longtable}
\noindent In particular, $\ad_{L_{0,-q}}=0$ if $q\in\mathbb Z$, and $D_2\in\Inn\big(\cB(q)\big)$ if $q\neq0$.
\end{proposition}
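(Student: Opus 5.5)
The plan is to use the $\mathbb Z^2$-grading of $\cB(q)$, given by $\deg L_{m,i}=(m,i)$, whose homogeneous components are all one-dimensional. For a derivation $D$ and each $(a,b)\in\mathbb Z^2$, define $D_{(a,b)}$ as the composition of $D$ restricted to $\cB(q)_{(m,i)}$ with the projection onto $\cB(q)_{(m+a,i+b)}$. Since the bracket is graded, each $D_{(a,b)}$ is again a derivation. Moreover, for every $x$ only finitely many $D_{(a,b)}(x)$ are nonzero, and $D=\sum_{(a,b)}D_{(a,b)}$ as a pointwise finite sum.

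Fix a degree $(a,b)$ and write $D_{(a,b)}(L_{m,i})=f(m,i)L_{m+a,i+b}$. The derivation identity on $[L_{m,i},L_{n,j}]$ becomes the functional equation
$$\big(n(i+q)-m(j+q)\big)f(m+n,i+j)=\big(n(i+b+q)-(m+a)(j+q)\big)f(m,i)+\big((n+a)(i+q)-m(j+b+q)\big)f(n,j).$$
The key steps, in order, are as follows.

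First, consider the case where $L_{a,b}$ is non-central, i.e. $(a,b)\neq(0,-q)$ when $q\in\mathbb Z$. Then $\ad_{L_{a,b}}(L_{m,i})=(m(b+q)-a(i+q))L_{m+a,i+b}$, and I will show that $f$ equals $c\big(m(b+q)-a(i+q)\big)$ for a suitable constant $c$. To do this, first determine $c$ from $f$ on a few low elements. The natural choices are $L_{0,0}$ and $L_{\pm1,0}$, together with $L_{0,\pm1}$ when $a=0$. Then propagate by induction on $|m|+|i|$, using that $\cB(q)$ is generated by these elements. There are exceptional degrees at which the propagation fails because the coefficient $n(i+q)-m(j+q)$ vanishes systematically. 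These are $(a,b)=(0,0)$, $(0,-q)$ and $(0,q)$, and the last two occur only for integer $q$.

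Second, treat these exceptional degrees by hand. In degree $(0,0)$, putting $n=0$ and varying $j$ shows that $f$ is affine in $(m,i)$ modulo $\ad$ of $L_{0,0}$. This gives $D_1$, and also $D_2$; when $q\neq0$, $D_2$ equals $\frac1q\ad_{L_{0,0}}$ and so is inner. In degree $(0,-q)$, the target elements $L_{m,i-q}$ bracket like a shifted copy of $\cB(q)$. Solving the equation there gives the two-dimensional space $\langle D_3,D_4\rangle$ for $q\in\mathbb Z^*$. For $q=0$ these coincide with $D_2,D_1$, and for $q\notin\mathbb Z$ they are not defined at all. In degree $(0,q)$, the only freedom is at $L_{0,-2q}$, which is the unique basis element not lying in $[\cB(q),\cB(q)]$ within its degree. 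It can be sent to the central element $L_{0,-q}$, which yields $D_5$. I will also verify that $D_1,\dots,D_5$ are genuinely derivations and are independent modulo $\Inn$.

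Finally, there is a summability step, which I expect to be the main obstacle. We have $D=\sum_{(a,b)}\big(\ad_{c_{a,b}L_{a,b}}+E_{(a,b)}\big)$, where $E_{(a,b)}$ is the outer part, nonzero only in the finitely many exceptional degrees. I must show that only finitely many $c_{a,b}$ are nonzero, so that the inner part is $\ad_x$ for an honest element $x$. The approach is to evaluate at a fixed element. For $q\neq0$, apply $D$ to $L_{0,0}$: the component of degree $(a,b)$ is $-c_{a,b}\,aq\,L_{a,b}$, so finiteness of $D(L_{0,0})$ forces $c_{a,b}=0$ for all but finitely many $(a,b)$ with $a\neq0$. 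For the degrees $(0,b)$, apply $D$ to $L_{1,0}$ in the same way. When $q=0$, replace $L_{0,0}$ by $L_{1,1}$ and $L_{0,1}$, and handle separately the few degrees where both of the corresponding coefficients vanish.
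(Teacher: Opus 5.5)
\textbf{Verdict: genuine gap, in Step~1.} Your overall architecture is sound: split $D$ into $\mathbb Z^2$-homogeneous components $D_{(a,b)}$, show that all but finitely many are multiples of $\ad_{L_{a,b}}$, treat the exceptional degrees by hand, then prove finiteness. It is the scheme the paper uses for $\cS(q)$ in Subsection~\ref{sec:qne0}; for $\cB(q)$ itself the paper only cites \cite{lgz18}. Your summability step is correct.

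The failure is in Step~1, where all the content lies. It rests on the claim that $\cB(q)$ is generated by $L_{0,0},L_{\pm1,0},L_{0,\pm1}$. This is false exactly in the cases that carry extra outer derivations.
\begin{itemize}
\item For $q\in\frac12\mathbb Z$, every bracket landing in degree $(0,-2q)$ has coefficient $-m(i+j+2q)=0$, so $L_{0,-2q}\notin[\cB(q),\cB(q)]$. It lies in the subalgebra generated by your five elements only if $2q\in\{0,\pm1\}$. In particular this fails for every $q\in\mathbb Z^*$.
\item For $q=1$ it is worse. $L_{0,-1}$ is central and the other four generators have second index $\ge 0$. So the subalgebra they generate lies in $\langle L_{m,i}: i\ge 0\rangle\oplus\langle L_{0,-1}\rangle$ and misses, e.g., $L_{1,-1}$.
\end{itemize}
So ``vanish on the generators, then induct'' does not reach every basis vector. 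Even with a correct generating set, the decisive claim is only asserted: that the values on the generators are forced to be those of a single multiple of $\ad_{L_{a,b}}$, except in degrees $(0,0),(0,\pm q)$, with exactly one free coefficient in degree $(0,q)$. The list of exceptional degrees has to be derived from the functional equation.

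\textbf{Repair.} Work with the functional equation directly, as the paper does for $\cS(q)$.
\begin{itemize}
\item For $q\neq 0$ and $a\neq 0$, the pair $(L_{0,0},L_{n,j})$ gives $-aq\,f(n,j)=f(0,0)\bigl(n(b+q)-a(j+q)\bigr)$ for all $(n,j)$. Hence $D_{(a,b)}=-\frac{f(0,0)}{aq}\ad_{L_{a,b}}$ outright, with no generation argument. This is Lemma~\ref{lem:reduce}, and you already use this computation for summability.
\item For $a=0$, $b=\beta$:
\begin{itemize}
\item setting $m=0$ gives $f(n,j)=h(n)+Bj$ for $n\neq0$, and $(i+\beta+q)f(0,i)=(i+q)Bi$;
\item comparing coefficients for $m,n,m+n\neq 0$ gives $h(n)=An-\beta B$ and $\beta(\beta+q)B=0$;
\item setting $n=-m$ gives $(k+2q)f(0,k)=(k+\beta+2q)B(k-2\beta)$.
\end{itemize}
These relations determine $f$ completely, except that $f(0,-2q)$ is free exactly when $\beta=q$. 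The outcome is $\frac{A}{\beta+q}\ad_{L_{0,\beta}}$ for $\beta\neq -q$, plus $B\,D_1$ at $\beta=0$, $A\,D_3+B\,D_4$ at $\beta=-q$, and a multiple of $D_5$ at $\beta=q$.
\item For $q=0$, your equation is exactly (E$_\sigma$). Lemmas~\ref{lem:der-ne} and~\ref{lem:der-zero} give $t\,\ad_{L_\sigma}$ for $\sigma\neq 0$, and $D_1,D_2,D_5$ for $\sigma=0$.
\end{itemize}
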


Let $q\in\mathbb C$. The Lie superalgebra $\cS(q)=\cS(q)_{\bar0}\oplus\cS(q)_{\bar1}$ has $\cS(q)_{\bar0}=\cB(q)$, and the odd part
$\cS(q)_{\bar1}$ has the basis $\{G_{m,i}\mid m,i\in\mathbb Z\}$, with the remaining multiplications
\begin{align}
[L_{m,i},G_{n,j}]&=\Big(n(i+q)-m\big(j+\tfrac q2\big)\Big)G_{m+n,i+j},\label{LG}\\
[G_{m,i},G_{n,j}]&=2q\,L_{m+n,i+j}.\label{GG}
\end{align}
By super skew-symmetry, $[G_{n,j},L_{m,i}]=-[L_{m,i},G_{n,j}]$.
We write $X_{m,i}$ for $L_{m,i}$ or $G_{m,i}$, and $c(X_a,X_b)$ for the structure constant in $[X_a,X_b]=c(X_a,X_b)X'_{a+b}$,
where $X'$ is the basis vector of the appropriate parity. We write $\cS=\cS(q)$ and $\cS_\theta=\cS(q)_\theta$ when $q$ is fixed.
The superalgebra $\cS$ is also $\mathbb Z^2$-graded: $\deg X_{m,i}=(m,i)$.
One checks directly that $\cS(q)$ is a Lie superalgebra for every $q\in\mathbb C$ (super skew-symmetry and the super Jacobi identity;
the latter is equivalent to every $\ad_x$, $x\in\hg(\cS)$, being a homogeneous derivation of degree $|x|$; both are verified formally).

\medskip
\noindent\textbf{Notation.} For $D\in\mathfrak{gl}(\cS)$ we write $D(X_a)_{Y_c}$ for the coefficient of the basis
vector $Y_c$ in $D(X_a)$. Comparing the coefficients of one basis vector on both sides of \eqref{der} or \eqref{qder}
for a pair of basis vectors gives a scalar equation. All the functional equations below arise in this way.
We use the following elementary formulas many times:
\begin{equation}\label{useful}
[L_{0,0},X_{m,i}]=mqX_{m,i},\ [L_{0,\beta},X_{m,i}]=m(\beta+q)X_{m,i+\beta},\ [G_{0,\beta},L_{m,i}]=m\big(\beta+\tfrac q2\big)G_{m,i+\beta}.
\end{equation}

It follows from the multiplication in the algebra  $\cS(q)$ that 
$Z(\cS(q))=0$ if $q\notin\mathbb Z$; $Z(\cS(q))=\langle L_{0,-q}\rangle$ if $q\in\mathbb Z^{*}$; and
$Z(\cS(0))=\langle L_{0,0},G_{0,0}\rangle$.

\section{Derivations of Block Lie superalgebras}

In this section we describe $\Der(\cS(q))$ for every $q\in\mathbb C$. Recall that a derivation is an arbitrary linear map
satisfying \eqref{der}; no finiteness assumption is made.

\subsection{\normalsize Outer derivations}

\begin{definition}\label{def:maps}
We define the following linear maps on $\cS$ ($X\in\{L,G\}$; basis vectors that are not mentioned are sent to $0$).

\medskip
\begin{longtable}{lll}
\hline
\multicolumn{3}{l}{Even maps:}\\
\hline 
map & definition & used for\\
\hline
$D_1$ & $X_{m,i}\mapsto i\,X_{m,i}$ & all $q$\\
$D_2$ & $X_{m,i}\mapsto m\,X_{m,i}$ & $q=0$\\
$D_3$ & $X_{m,i}\mapsto m\,X_{m,i-q}$ & $q\in\mathbb Z^*$\\
$D_4$ & $L_{m,i}\mapsto (i+q)\,L_{m,i-q}$,\quad $G_{m,i}\mapsto \bigl(i+\frac q2\bigr)G_{m,i-q}$ & $q\in\mathbb Z^*$\\
$E_L$ & $L_{0,0}\mapsto L_{0,0}$ & $q=0$\\
$E_G$ & $G_{0,0}\mapsto G_{0,0}$ & $q=0$\\
$T$ & $L_{m,i}\mapsto 0$,\quad $G_{m,i}\mapsto G_{m,i}$ & $q=0$\\
\hline\\ \hline
\multicolumn{3}{l}{Odd maps:}\\
\hline   
map & definition & used for\\
\hline
$D_6$ & $G_{0,-\frac{3q}{2}}\mapsto L_{0,-q}$ & $q\in 2\mathbb Z^*$\\
$O_1$ & $L_{m,i}\mapsto i\,G_{m,i}$,\quad $G_{m,i}\mapsto 0$ & $q=0$\\
$O_2$ & $L_{m,i}\mapsto m\,G_{m,i}$,\quad $G_{m,i}\mapsto 0$ & $q=0$\\
$O_5$ & $L_{0,0}\mapsto G_{0,0}$ & $q=0$\\
$\Th$ & $L_{m,i}\mapsto 0$,\quad $G_{m,i}\mapsto L_{m,i}$ & $q=0$\\
$\Th_0$ & $G_{0,0}\mapsto L_{0,0}$ & $q=0$\\
\hline
\end{longtable}
\end{definition}

\begin{remark}
If $q\ne0$ then $D_2=\frac1q\ad_{L_{0,0}}$ is inner by \eqref{useful}. On the even part, $D_1,\dots,D_4$ restrict to the maps with
the same names in Proposition \ref{prop:derB}.
For $q=0$ the maps $E_L$ and $O_5$ are the even and odd versions of $D_5$ ($L_{0,0}\mapsto L_{0,0}$).
\end{remark}

\begin{lemma}\label{lem:outer-are-der}
Each map in Definition~\ref{def:maps} is a homogeneous derivation of $\cS(q)$ of the indicated
parity, for the values of $q$ indicated in the tables.
\end{lemma}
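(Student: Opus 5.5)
Since every map $D$ in Definition~\ref{def:maps} is linear, it suffices to check \eqref{der} on pairs of basis vectors $x=X_{m,i}$, $y=X_{n,j}$. There are three parity cases for the pair: $(L,L)$, $(L,G)$ in either order, and $(G,G)$. Each check then reduces to comparing the coefficient of one basis vector on both sides. I group the maps by type.

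\textbf{Grading-type maps $D_1$, $D_2$, $T$, $E_L$, $E_G$.} The maps $D_1$ and $D_2$ act diagonally by a coordinate of the $\mathbb Z^2$-degree, and degrees are additive under the bracket, so both are derivations for every $q$. (For $q\neq0$, $D_2$ is inner anyway.) The map $T$ for $q=0$ multiplies by the $\mathbb Z_2$-degree: it acts as $0$ on $L$ and as $1$ on $G$. Since $[G,G]=2qL=0$ when $q=0$, the bracket is additive in the count of $G$'s on every nonzero product, so $T$ is a derivation. For $E_L$ and $E_G$ with $q=0$, the vectors $L_{0,0}$ and $G_{0,0}$ are central. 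Both sides of \eqref{der} then vanish, except for the left-hand side when $[x,y]$ has a component along $L_{0,0}$ or $G_{0,0}$. That component has coefficient $c=nq\cdot(\dots)$, which is of the form $ni-mj$ or $n i - mj$ with $(m,i)=-(n,j)$, giving $-ni+ni=0$. So the derived algebra avoids these basis vectors.

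\textbf{Shift-type maps $D_3$, $D_4$ ($q\in\mathbb Z^*$).} Here $D(X_{m,i})=f(X_{m,i})X_{m,i-q}$, and the three parity cases give scalar identities. Write $c_{LL}=n(i+q)-m(j+q)$, and similarly for the other structure constants. For $D_3$ on $(L,L)$ we need
$$(m+n)c(L_{m,i},L_{n,j})=m\,c(L_{m,i-q},L_{n,j})+n\,c(L_{m,i},L_{n,j-q}).$$
This identity is already known from Proposition~\ref{prop:derB}. The new cases are $(L,G)$, with $j+\tfrac q2$ in place of $j+q$, and $(G,G)$, which requires $(m+n)\,2q=m\,2q+n\,2q$ and holds trivially. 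For $D_4$ the $(G,G)$ case requires
$$2q(i+j+q)=2q\bigl(i+\tfrac q2\bigr)+2q\bigl(j+\tfrac q2\bigr),$$
which is exactly why the coefficient $i+\tfrac q2$ was chosen. The $(L,G)$ case is a short polynomial identity in $m,n,i,j,q$.

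\textbf{Odd maps.} For $q=0$ the odd part is an abelian ideal, since $[G,G]=0$. The map $\Th$ is defined by $G_{m,i}\mapsto L_{m,i}$ and kills $L$. The map $O_1$ sends $L_{m,i}\mapsto iG_{m,i}$, and $O_2$ sends $L_{m,i}\mapsto mG_{m,i}$. Checking them needs the sign $(-1)^{|x|}$. For $O_1$ and $O_2$ on $(L,L)$, the condition reduces to the same identity as for $D_1$ and $D_2$, because with $q=0$ the constants $c(L,G)$ and $c(L,L)$ coincide. On pairs involving $G$, both sides vanish: the image lies in $\cS_{\bar1}$, and $[G,G]=0$.

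For $\Th$ on $(L,G)$, the left side is $c\,L_{m+n,i+j}$ and the right side is $\pm[L_{m,i},L_{n,j}]$, so the same constant appears and the sign from super skew-symmetry works out. On $(G,G)$ we get $0=[L,G]\pm[G,L]$. This cancels by super skew-symmetry because of the sign $(-1)^{|x|}=-1$, and I expect this to be the main sign-bookkeeping point. The maps $O_5$ and $\Th_0$ are handled like $E_L$ and $E_G$: their sources are central, and their sources never occur in the derived algebra.

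\textbf{$D_6$ ($q\in2\mathbb Z^*$).} Here $L_{0,-q}$ is central. For the left side, $G_{0,-3q/2}$ must not appear in any bracket. Its coefficient in $[L_{m,i},G_{-m,j}]$ with $i+j=-\tfrac{3q}{2}$ is
$$-m(i+q)-m\bigl(j+\tfrac q2\bigr)=-m\bigl(i+j+\tfrac{3q}{2}\bigr)=0.$$
On the right side, the only nonzero terms are $[D_6 x,y]$ and $[x,D_6 y]$, both of which are brackets with the central element $L_{0,-q}$ and hence vanish.
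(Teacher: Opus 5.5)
Your proposal is correct and follows essentially the paper's route. You reduce to pairs of basis vectors and check a scalar identity for the shift-type maps. For the one-vector maps $E_L,E_G,O_5,\Th_0,D_6$ you use that the target is central and the source never occurs in a bracket of basis vectors. Your grading argument for $D_1,D_2,T$ and the appeal to Proposition~\ref{prop:derB} for the $(L,L)$ cases of $D_3,D_4$ are harmless shortcuts.

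A few small slips to fix. For $O_5$ and $\Th_0$, what is needed is that the \emph{targets} are central, not the sources; this holds because $L_{0,0}$ and $G_{0,0}$ are both central. The $(G,G)$ case for $\Th$ relies on $s$ being alternating at $q=0$, not on super skew-symmetry alone. Finally, the $(L,G)$ identities for $D_3$ and $D_4$ should be written out, as the paper does for $D_4$.
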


\begin{proof}
All the maps except $E_L,E_G,O_5,\Th_0,D_6$ have the form
$X_a\mapsto w(X_a)\,X'_{a+s}$ for a fixed shift $s\in\mathbb Z^2$ (and fixed parity change). For such a map the derivation identity on
the pair $(X_a,X_b)$ reduces to the scalar identity
\[
w(X_{a+b})\,c(X_a,X_b)=w(X_a)\,c(X'_{a+s},X_b)+(-1)^{\theta|X_a|}\,w(X_b)\,c(X_a,X'_{b+s}).
\]
For each map, this is checked case by case ($LL$, $LG$,   $GG$) by a direct polynomial
computation. For example, for $D_4$ ($q=k\in\mathbb Z^*$) and the pair $(L_{m,i},G_{n,j})$ we need
\[
\bigl(i+j+\tfrac k2\bigr)\bigl(n(i+k)-m(j+\tfrac k2)\bigr)
=(i+k)\bigl(ni-m(j+\tfrac k2)\bigr)+\bigl(j+\tfrac k2\bigr)\bigl(n(i+k)-m(j-\tfrac k2)\bigr),
\]
and for the pair $(G_{m,i},G_{n,j})$ we need $2k(i+j+k)=2k(i+\tfrac k2)+2k(j+\tfrac k2)$.

The maps $E_L,E_G,O_5,\Th_0,D_6$ send one basis vector $X_{a_0}$ to a vector $X'_{c_0}$
and all other basis vectors to $0$. Such a map is a derivation as soon as
(i) $X'_{c_0}$ is central, and (ii) $X_{a_0}$ does not occur in any multiplication of two basis
vectors with a nonzero coefficient.
\begin{itemize}
\item For $q=0$ ($E_L,E_G,O_5,\Th_0$): $L_{0,0}$ and $G_{0,0}$ are central, and neither of them occurs in a
multiplication of basis vectors, because $[X_a,X_b]$ with $a+b=0$ has coefficient $s(a,-a)=0$ (see \eqref{eq:B0} below) and
$[G,G]=0$.
\item For $q=2k\in2\mathbb Z^*$ and $D_6$: $L_{0,-q}$ is central. Moreover, $G_{0,-3k}$ does not occur in any multiplication
of basis vectors with a nonzero coefficient: since $[\cS_{\bar0},\cS_{\bar0}]+[\cS_{\bar1},\cS_{\bar1}]\subseteq\cS_{\bar0}$, it could only occur in
$[L_{m,i},G_{-m,-3k-i}]=-[G_{-m,-3k-i},L_{m,i}]$, and by \eqref{LG} the coefficient is
$-m(i+2k)-m(-3k-i+k)=0$. \qedhere
\end{itemize}
\end{proof}

\subsection{\normalsize Main theorem on derivations}

\begin{theorem}\label{thm:main}
Let $q\in\mathbb C$. The homogeneous derivations of $\cS(q)$ are the following, and all sums are direct
(i.e.\ the listed outer maps are linearly independent modulo inner derivations):
\begin{longtable}{lcl}
$\Der_{\bar0}(\cS(q))$&$=$&$\begin{cases}
\ad(\cS_{\bar0})\oplus\langle D_1\rangle, & q\notin\mathbb Z,\\
\ad(\cS_{\bar0})\oplus\langle D_1,D_3,D_4\rangle, & q\in\mathbb Z^*,\\
\ad(\cS_{\bar0})\oplus\langle D_1,D_2,E_L,E_G,T\rangle, & q=0,
\end{cases}$\\[8mm]
$\Der_{\bar1}(\cS(q))$&$=$&$\begin{cases}
\ad(\cS_{\bar1}), & q\notin2\mathbb Z,\\
\ad(\cS_{\bar1})\oplus\langle D_6\rangle, & q\in2\mathbb Z^*,\\
\ad(\cS_{\bar1})\oplus\langle O_1,O_2,O_5,\Th,\Th_0\rangle, & q=0.
\end{cases}$
\end{longtable}
\noindent In particular, the outer part $\Der(\cS(q))/\Inn(\cS(q))$ has dimension
$1$ if $q\notin\mathbb Z$, dimension $3$ if $q$ is an odd integer, dimension $3+1=4$ if $q\in2\mathbb Z^*$, and dimension $5+5=10$ if $q=0$.
\end{theorem}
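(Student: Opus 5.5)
Lemma~\ref{lem:outer-are-der} already shows that the listed maps are derivations, so two things remain. First, every homogeneous derivation must be shown to lie in the stated sum. Second, the listed outer maps must be shown to be independent modulo $\Inn$.

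The plan is the standard weight-space reduction. We decompose a derivation $D$ of parity $\theta$ into $\mathbb Z^2$-homogeneous components $D=\sum_{s\in\mathbb Z^2}D_s$, where $D_s(X_a)\in\cS_{a+s}$. Each $D_s$ is again a derivation, since the multiplication is $\mathbb Z^2$-graded and \eqref{der} can be compared coefficientwise. The sum is formal, but $D(X_a)$ is a finite sum for each $a$.

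Case $q\neq0$. Here $\ad_{L_{0,0}}$ acts on $\cS_{(m,i)}$ by the scalar $mq$, by \eqref{useful}. Applying $D_s$ to $[L_{0,0},X_a]=a_1qX_a$ gives
\[
s_1q\,D_s(X_a)=-[D_s(L_{0,0}),X_a].
\]
So for $s_1\neq0$ we get $D_s=\ad_{-D_s(L_{0,0})/(s_1q)}$, which is inner. Moreover, only finitely many $s$ with $s_1\neq0$ occur, because $D(L_{0,0})$ is a finite sum. This leaves the components with $s_1=0$. For each $s=(0,s_2)$ we would write $D_s(L_{m,i})=f(m,i)X'_{m,i+s_2}$ and $D_s(G_{m,i})=g(m,i)X''_{m,i+s_2}$. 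We then solve the functional equations obtained from the pairs $(L_{0,j},\cdot)$ and $(L_{\pm1,j},\cdot)$, which generate, and from $(G,G)$.

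In the even case, restricting to $\cS_{\bar0}=\cB(q)$ and using Proposition~\ref{prop:derB} tells us that, after subtracting an inner derivation $\ad_{L_{0,s_2}}$ and multiples of $D_1,D_3,D_4$, the even-to-even part is a multiple of $D_5$ or zero. The $LG$ equations then determine $g$ from $f$. The $GG$ relation $[G_{m,i},G_{n,j}]=2qL_{m+n,i+j}$ is what forces the $D_5$ coefficient to vanish: $L_{0,-2q}=\frac1{2q}[G_{0,-2q},G_{0,0}]$, so $D_5$ would be incompatible. Components $D_{(0,s_2)}$ can be nonzero for only finitely many $s_2$. This follows by evaluating on a few generators such as $L_{\pm1,0},L_{0,\pm1},G_{0,0}$.

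The odd case is the main obstacle. There $D_s$ maps $L\to G$ and $G\to L$. We write $D_s(L_{m,i})=f(m,i)G_{m,i+s_2}$ and $D_s(G_{m,i})=g(m,i)L_{m,i+s_2}$. The $LL$ equations make $f$ a solution of a linear recursion in $(m,i)$. The $LG$ and $GG$ equations couple $f$ and $g$ via factors $2q$. I expect these to force $(f,g)$ to equal the structure constants of $\ad_{G_{0,s_2}}$ up to a scalar, except at isolated lattice points where a coefficient vanishes. Those exceptional points produce exactly $D_6$ when $s_2=q/2-q$ is an integer, that is, when $q\in 2\mathbb Z^*$, with source $G_{0,-3q/2}$ and target the center $L_{0,-q}$. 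The careful bookkeeping of the degenerate indices (where $n(i+q)-m(j+q)$ or $j+q/2$ vanishes) is where I expect most of the work.

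Case $q=0$. Now $[G,G]=0$ and $\ad_{L_{0,0}}=0$, so the grading trick fails. Instead I would use $\ad_{L_{0,1}}$, which acts by $m$ with shift $(0,1)$, together with $\ad_{L_{1,0}}$ to separate the shifts. The computation then proceeds componentwise as above. The extra freedom comes from three sources. The even and odd parts decouple as modules, which gives $T$ and $\Th$. The maps $L\to G$ can copy $D_1$ and $D_2$, giving $O_1$ and $O_2$. Finally, the central elements $L_{0,0},G_{0,0}$ are not in $[\cS,\cS]$, which gives $E_L,E_G,O_5,\Th_0$.

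For independence, we evaluate a putative relation $\sum c_kD_k=\ad_x$ on a few basis vectors and compare coefficients. For example, $\ad_x$ kills the central elements, while $E_L,E_G,O_5,\Th_0,D_6$ do not. Also, $D_1,D_3,D_4,T,\Th$ are not inner because inner derivations of degree $(0,0)$ act by multiples of $m$.
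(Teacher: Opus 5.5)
Your reduction for $q\neq0$ is correct; it is Lemma~\ref{lem:reduce} in graded form. For even derivations, restricting to $\cS_{\bar0}=\cB(q)$ and invoking Proposition~\ref{prop:derB} is a valid alternative to Lemma~\ref{lem:even-solve}. After subtracting $\ad_y+c_1D_1+c_3D_3+c_4D_4$, the remainder $D''$ satisfies $D''|_{\cS_{\bar0}}=c_5D_5$. The $GG$ identity at $(G_{0,0},G_{0,0})$ and $(G_a,G_{0,0})$ shows that $D''(G_a)$ is $c_5D_5(L_a)$ with $L$ replaced by $G$. Then $(G_{0,-q},G_{0,-q})$ gives $c_5=0$.

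The genuine gap is that the cases producing the new outer maps are only described, not proved. For odd derivations with $q\neq0$ you write that you ``expect'' the answer; the missing content is Lemma~\ref{lem:odd-solve}. Write $D_{(0,\beta)}(L_{m,i})=f(m,i)G_{m,i+\beta}$ and $D_{(0,\beta)}(G_{m,i})=g(m,i)L_{m,i+\beta}$. Then:
\begin{itemize}
\item The $GG$ identity with $m=n=0$ gives $f(0,\cdot)=0$.
\item The $LL$ identity with $n=0$, and then at $(m,1,n,0)$ versus $(m,0,n,0)$, gives $f=am$.
\item The $LG$ identity with $m=0$ makes $g(n,\cdot)$ constant for $n\neq0$.
\item With $n=0$ and $m\neq0$ it reads $-(j+\frac q2)g(m,0)=2qa-(j+\beta+q)g(0,j)$. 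Together with $(m,i,n,j)=(1,0,1,0)$ this gives $g(m,0)=\Gamma:=g(1,0)$, $2qa=\Gamma(\beta+\frac q2)$, and $g(0,j)=\Gamma$ unless $j+\beta+q=0$.
\item At $(1,0,-1,i)$ it gives $g(0,i)=\Gamma$ unless $i+\frac{3q}2=0$.
\end{itemize}
Only a coefficient at which \emph{both} equations degenerate can survive. This is what pins down $\beta=\frac q2$ and the source $G_{0,-3q/2}$, and shows nothing else survives; ``isolated points where a coefficient vanishes'' does not establish this. Also, $D_6$ has degree $(0,\frac q2)$, not $(0,-\frac q2)$.

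The case $q=0$ is likewise unproved. The operators $\ad_{L_{0,1}}$ and $\ad_{L_{1,0}}$ cannot play the role of $\ad_{L_{0,0}}$: they shift degrees rather than act diagonally. So they give recursions, not the conclusion that components of nonzero shift are inner. Since you already have the $\mathbb Z^2$-graded components, you need to solve, for each shift $\sigma$:
\begin{itemize}
\item (E$_\sigma$) for the $L\to L$ (resp.\ $L\to G$) coefficients;
\item (Z$_\sigma$) for the $G\to G$ minus $L\to L$ (resp.\ $G\to L$) coefficients.
\end{itemize}
For $\sigma\neq0$ the first must be $t_\sigma s(\sigma,\cdot)$ everywhere and the second must vanish. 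For $\sigma=0$ the first is linear and the second constant off the origin (Lemmas~\ref{lem:der-ne}--\ref{lem:cent-zero}). Your ``three sources of freedom'' name the answer but exclude nothing. Proposition~\ref{prop:derB} at $q=0$ would cover the (E$_\sigma$)-parts, since $G_\alpha\mapsto L_\alpha$ turns the $L\to G$ part of an odd derivation into a derivation of $\cB(0)$. The (Z$_\sigma$)-parts still need an argument.

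Finally, your independence argument is wrong for $D_6$. It kills the center $\langle L_{0,-q}\rangle$, and its source is not central, since $[G_{0,0},G_{0,-3q/2}]=2qL_{0,-3q/2}\neq0$. Use instead that $\ad_x=cD_6$ with $x=\sum x_{n,j}G_{n,j}$ gives $0=[x,G_{1,0}]=2q\sum x_{n,j}L_{n+1,j}$, so $x=0$ and $c=0$. Two further points need correcting:
\begin{itemize}
\item At $q=0$, evaluation at $G_{0,0}$ gives only the sum of the $E_G$- and $T$-coefficients, since $T(G_{0,0})=G_{0,0}$.
\item $D_3,D_4$ are outer because the only inner derivation of degree $(0,-q)$ is $\ad_{L_{0,-q}}=0$, not because of your remark about degree $(0,0)$.
\end{itemize}
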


\begin{remark}[Comparison with $\cB(q)$]\label{rem:compare}

\begin{enumerate}
\item For $q\in\mathbb Z^*$ the derivation $D_5\colon L_{0,-2q}\mapsto L_{0,-q}$ of the even part $\cB(q)$ does
\emph{not} extend to an even derivation of $\cS(q)$. The reason is that
$L_{0,-2q}=(2q)^{-1}[G_{0,0},G_{0,-2q}]$ lies in the derived superalgebra of $\cS(q)$.
\item Instead, a new \emph{odd} outer derivation $D_6$ appears, but only when $q$ is an even
integer. It exists because $G_{0,-3q/2}$ is not a multiplication of basis vectors and
$L_{0,-q}$ is central.
\item For $q=0$ we have $[\cS_{\bar1},\cS_{\bar1}]=0$, and $\cS(0)=\cB(0)\ltimes M$, where the odd part $M$ is an
abelian ideal isomorphic to the adjoint $\cB(0)$-module. This produces the additional maps
$E_G$, $T$, $O_1$, $O_2$, $O_5$, $\Th$, $\Th_0$.
\end{enumerate}
\end{remark}

The remainder of this section proves Theorem~\ref{thm:main}. By
Lemma~\ref{lem:outer-are-der}, and since inner derivations are derivations, every map on
the right-hand side is a derivation. It remains to prove that every derivation has the stated form
(Subsections \ref{sec:qne0} and \ref{sec:q0}) and that the sums are direct (Subsection \ref{sec:direct}).

\subsection{\normalsize Derivations of $\cS(q)$ for $q\neq0$}\label{sec:qne0}

\begin{lemma}\label{lem:reduce}
Let $q\ne0$ and $D\in\Der_\theta(\cS)$. There is $x\in\cS_\theta$ such that
$\tilde D=D-\ad_x$ preserves the first index: $\tilde D(X_{m,i})\in\langle Y_{m,j}\mid Y\in\{L,G\},\ j\in\mathbb Z\rangle$.
\end{lemma}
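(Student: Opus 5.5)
The plan is to diagonalize the action of $\ad_{L_{0,0}}$, which by \eqref{useful} acts on $X_{m,i}$ as multiplication by $mq$. Since $q\neq0$, the eigenvalue $mq$ determines $m$, so "preserving the first index'' means exactly commuting with $\ad_{L_{0,0}}$. Decompose $D$ by first-index shift: for each $k\in\mathbb Z$ let $D_{[k]}$ be the part of $D$ sending $X_{m,i}$ into $\langle Y_{m+k,j}\rangle$. Explicitly, $D_{[k]}(X_{m,i})$ is the component of $D(X_{m,i})$ lying in first index $m+k$. This decomposition is well defined even though $D$ may have infinitely many nonzero $D_{[k]}$, because each $D(X_{m,i})$ is a finite sum. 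Since the bracket is additive in the first index, each $D_{[k]}$ is again a homogeneous derivation of degree $\theta$: comparing components of the derivation identity on a pair $(X_{m,i},X_{n,j})$ in first index $m+n+k$ gives exactly the identity for $D_{[k]}$.

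The key step is to apply the derivation identity with $x=L_{0,0}$. For $D_{[k]}$ and $y=X_{m,i}$ this gives
\[
mq\,D_{[k]}(X_{m,i})=[D_{[k]}(L_{0,0}),X_{m,i}]+(m+k)q\,D_{[k]}(X_{m,i})\cdot(-1)\cdot(-1),
\]
and after sorting signs,
\[
kq\,D_{[k]}(X_{m,i})=[X_{m,i},D_{[k]}(L_{0,0})]
\]
up to sign. Hence for $k\ne0$ we have $D_{[k]}=\ad_{x_k}$ with $x_k=-(kq)^{-1}D_{[k]}(L_{0,0})$, up to the appropriate sign. Here $x_k\in\cS_\theta$, and it lies in first index $k$.

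The main obstacle is infiniteness. We want $x=\sum_{k\ne0}x_k$, but this is only legitimate if $D(L_{0,0})$ is a finite sum, and it is: $D(L_{0,0})=\sum_k D_{[k]}(L_{0,0})$, and $x_k$ is essentially the first-index-$k$ component of $D(L_{0,0})$. So only finitely many $x_k$ are nonzero. Set $x=\sum_{k\neq0}x_k$, which equals $-\frac1q$ times the "$\ad_{L_{0,0}}$-inverted'' part of $D(L_{0,0})$ with $m\neq0$ components. Then $D-\ad_x=D_{[0]}+\sum_{k\ne0}(D_{[k]}-\ad_{x_k})$ on each basis vector. Each basis vector meets only finitely many nonzero components, so this regrouping is valid pointwise, and we get $D-\ad_x=D_{[0]}$, which preserves the first index.

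I would be careful to verify that $D_{[k]}=\ad_{x_k}$ holds on all basis vectors, both $L$ and $G$, with the signs for odd $\theta$ handled correctly via super skew-symmetry. For $\theta=\bar1$ and $y=G_{m,i}$ the sign $(-1)^{\theta|x|}$ with $x=L_{0,0}$ is trivial, so only skew-symmetry signs enter, and they are uniform in $y$. This yields the lemma.
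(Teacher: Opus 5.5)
Your proposal is correct and is essentially the paper's argument. Both use the identity from the pair $(L_{0,0},X_b)$, namely $-kq\,D_{[k]}(X_b)=[D_{[k]}(L_{0,0}),X_b]$, and the same element $x=-\sum_{k\ne0}(kq)^{-1}D_{[k]}(L_{0,0})$; the paper reads this coefficientwise for $D-\ad_x$ rather than first splitting $D$ into shift components. Since $L_{0,0}$ is even, no sign enters this identity, so you can drop the hedges ``up to sign'': $x_k=-(kq)^{-1}D_{[k]}(L_{0,0})$ is exact.
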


\begin{proof}
Write $D(L_{0,0})=\sum\limits_{\gamma} c_\gamma X_\gamma$ (a finite sum), and put
\begin{center}
$x:\ =\ -\sum_{\gamma=(m_\gamma,i_\gamma),\ m_\gamma\ne0}\frac{c_\gamma}{m_\gamma q}\,X_\gamma\in\cS_\theta .$
\end{center}
Since $[L_{0,0},X_{m,i}]=mq\,X_{m,i}$, the derivation identity for the pair $(L_{0,0},X_b)$, $b=(m_b,i_b)$,
read at a basis vector $X_c$ with $m_c\ne m_b$, gives
\begin{center}
$(m_b-m_c)\,q\,D(X_b)_{X_c}=[D(L_{0,0}),X_b]_{X_c}.$
\end{center}
The same identity holds for the derivation $\ad_x$. Moreover
\begin{center}
$\ad_x(L_{0,0})=-[L_{0,0},x]=\sum_{m_\gamma\ne0}c_\gamma X_\gamma$,
\end{center} so $D(L_{0,0})-\ad_x(L_{0,0})$ has only components with
first index $0$. Hence $[D(L_{0,0})-\ad_x(L_{0,0}),X_b]$ has only components with first index $m_b$, and
therefore $\tilde D(X_b)_{X_c}=0$ whenever $m_c\ne m_b$.
\end{proof}

Consequently $\tilde D$ decomposes into its $\mathbb Z^2$-homogeneous components of degree $(0,\beta)$, $\beta\in\mathbb Z$.
For fixed $\beta$ we study the corresponding coefficients.

\medskip
\noindent\textbf{Even derivations.}
Let $\tilde D$ be as in Lemma~\ref{lem:reduce} with $\theta=\bar0$, and for $\beta\in\mathbb Z$ put
\[
f(m,i)=\tilde D(L_{m,i})_{L_{m,i+\beta}},\qquad g(m,i)=\tilde D(G_{m,i})_{G_{m,i+\beta}}.
\]
(For simplicity we suppress $\beta$ from the notation.) The pairs $(G_{m,i},G_{n,j})$ and
$(L_{m,i},L_{n,j})$ give, after cancelling $2q\ne0$ in the first,
\begin{align}
f(m+n,i+j)&\ =\ g(m,i)+g(n,j), \label{eq:evC}\\
\bigl(n(i+q)-m(j+q)\bigr)f(m+n,i+j)&\ =\ f(m,i)\bigl(n(i+\beta+q)-m(j+q)\bigr)\notag\\
&\ \ \quad+\bigl(n(i+q)-m(j+\beta+q)\bigr)f(n,j). \label{eq:evA}
\end{align}

\begin{lemma}\label{lem:even-solve}
Every solution of \eqref{eq:evC}--\eqref{eq:evA} has the form
\[
g(m,i)=Am+Bi+C\qquad\text{and}\qquad f(m,i)=Am+Bi+2C,
\]
where $A,B,C\in\mathbb C$ satisfy
\[
\beta B+2C=0\qquad\text{and}\qquad C(\beta+q)=0.
\]
\end{lemma}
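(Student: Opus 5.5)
We first extract all information from the Cauchy-type equation \eqref{eq:evC}, and only then turn to \eqref{eq:evA}.

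\emph{Step 1: solving \eqref{eq:evC}.} Equation \eqref{eq:evC} says that $g(m,i)+g(n,j)$ depends only on $(m+n,i+j)$. Put $n=j=0$ to get
\[
f(m,i)=g(m,i)+g(0,0).
\]
Substituting back gives
\[
g(m+n,i+j)+g(0,0)=g(m,i)+g(n,j).
\]
Hence $h(m,i):=g(m,i)-g(0,0)$ is additive on $\mathbb Z^2$. An additive function on $\mathbb Z^2$ is linear, so $h(m,i)=Am+Bi$ with $A=h(1,0)$ and $B=h(0,1)$; additivity over $\mathbb Z$ suffices for this. Setting $C:=g(0,0)$ we obtain
\[
g(m,i)=Am+Bi+C,\qquad f(m,i)=Am+Bi+2C.
\]
No use of \eqref{eq:evA} is needed for the shape of the solution.

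\emph{Step 2: constraints from \eqref{eq:evA}.} Substitute $f(m,i)=Am+Bi+2C$ into \eqref{eq:evA}. The terms containing $A$ and $B$ should cancel up to a $\beta$-correction. Indeed, the map $X_{m,i}\mapsto(Am+Bi)X_{m,i+\beta}$ is, for $\beta=0$, the derivation $AD_2+BD_1$, and for general $\beta$ it is close to the shifted derivations $D_3,D_4$. So expand \eqref{eq:evA} as a polynomial identity in $m,n,i,j$ and compare coefficients.

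The difference between the two sides of \eqref{eq:evA} is
\[
\bigl(n(i+q)-m(j+q)\bigr)\bigl(f(m+n,i+j)-f(m,i)-f(n,j)\bigr)-\beta\bigl(nf(m,i)-mf(n,j)\bigr).
\]
By linearity, $f(m+n,i+j)-f(m,i)-f(n,j)=-2C$. For the second term,
\[
nf(m,i)-mf(n,j)=B(ni-mj)+2C(n-m),
\]
since the $A$-terms cancel. Requiring the difference to vanish gives
\[
-2C\bigl(n(i+q)-m(j+q)\bigr)-\beta B(ni-mj)-2\beta C(n-m)=0
\]
for all $m,n,i,j$.

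\emph{Step 3: reading off the coefficients.}
\begin{itemize}
\item The coefficient of $ni$ gives $-2C-\beta B=0$, that is, $\beta B+2C=0$.
\item The coefficient of $n$ (no $i,j$) gives $-2Cq-2\beta C=0$, that is, $C(\beta+q)=0$.
\item The $mj$ and $m$ coefficients give the same two relations by antisymmetry.
\end{itemize}
These are exactly the stated conditions. Conversely, these relations make the expression vanish identically, so every such triple $(A,B,C)$ is indeed a solution.

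\emph{Main obstacle.} There is little real difficulty here. The one point requiring care is the bookkeeping of the expansion in Step 2, in particular making sure the $A$-terms cancel completely, so that $A$ stays free.
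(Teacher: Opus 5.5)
Your proof is correct and follows the paper's route. Step 1 is identical: set $n=j=0$ in \eqref{eq:evC} and use additivity of $g-g(0,0)$. For the constraints, the paper evaluates \eqref{eq:evA} only at $(m,i,n,j)=(0,0,1,0)$ and $(0,1,1,0)$, whereas you expand the whole identity as a polynomial in $m,n,i,j$ and compare coefficients; this gives the same two relations and, as a small bonus, also shows that they are sufficient.
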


\begin{proof}
Putting $n=j=0$ in \eqref{eq:evC} gives $f(m,i)=g(m,i)+g(0,0)$. Hence
\begin{center}
$g(m+n,i+j)+g(0,0)=g(m,i)+g(n,j)$,
\end{center} so $g-g(0,0)$ is additive on $\mathbb Z^2$, and therefore
$g(m,i)=Am+Bi+C$ with $C=g(0,0)$. Substituting into \eqref{eq:evA} with
$(m,i,n,j)=(0,0,1,0)$ gives $2C(\beta+q)=0$. With $(m,i,n,j)=(0,1,1,0)$ it gives
$(1+q)B=(B+2C)(1+\beta+q)$, that is, $\beta B+2C+2C(\beta+q)=0$.
\end{proof}

\begin{proposition}\label{prop:even-qne0}
Let $q\ne0$ and $D\in\Der_{\bar0}(\cS(q))$. Then
\begin{itemize}
\item $D\in\ad(\cS_{\bar0})+\langle D_1\rangle$ if $q\notin\mathbb Z$;
\item $D\in\ad(\cS_{\bar0})+\langle D_1,D_3,D_4\rangle$ if $q\in\mathbb Z^*$.
\end{itemize}
\end{proposition}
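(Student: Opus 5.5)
We will first apply Lemma~\ref{lem:reduce} and replace $D$ by $\tilde D=D-\ad_x$ with $x\in\cS_{\bar0}$, so that $\tilde D$ preserves the first index. Since $\cS$ is $\mathbb Z^2$-graded and $\tilde D$ is even, we split $\tilde D$ coefficientwise into components $\tilde D^{(\beta)}$ of degree $(0,\beta)$, $\beta\in\mathbb Z$. These satisfy
\[
\tilde D^{(\beta)}(L_{m,i})=f_\beta(m,i)L_{m,i+\beta},\qquad \tilde D^{(\beta)}(G_{m,i})=g_\beta(m,i)G_{m,i+\beta}.
\]
Comparing coefficients of a single basis vector in the derivation identity for $\tilde D$ only involves one $\beta$ at a time. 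Hence each $\tilde D^{(\beta)}$ is itself a derivation, and $(f_\beta,g_\beta)$ satisfies \eqref{eq:evC}--\eqref{eq:evA}. Lemma~\ref{lem:even-solve} then gives
\[
g_\beta=A_\beta m+B_\beta i+C_\beta,\qquad f_\beta=A_\beta m+B_\beta i+2C_\beta,
\]
with $\beta B_\beta+2C_\beta=0$ and $C_\beta(\beta+q)=0$.

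Next we run through the cases in $\beta$.
\begin{itemize}
\item If $\beta=0$, then $C_0=0$ and $\tilde D^{(0)}=A_0D_2+B_0D_1$, where $D_2=\frac1q\ad_{L_{0,0}}$ is inner by the Remark after Definition~\ref{def:maps}.
\item If $\beta\neq0$ and $\beta\ne-q$, then $C_\beta=0$ and hence $B_\beta=0$. So $\tilde D^{(\beta)}\colon X_{m,i}\mapsto A_\beta mX_{m,i+\beta}$, and by \eqref{useful} this equals $\frac{A_\beta}{\beta+q}\ad_{L_{0,\beta}}$. This case covers every $\beta\neq0$ when $q\notin\mathbb Z$.
\item If $q\in\mathbb Z^*$ and $\beta=-q$, then $B_{-q}=2C_{-q}/q$. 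This gives $f=A m+\frac{2C}{q}(i+q)$ and $g=Am+\frac{2C}{q}(i+\frac q2)$, that is, $\tilde D^{(-q)}=A_{-q}D_3+\frac{2C_{-q}}{q}D_4$.
\end{itemize}

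Finally we have to reassemble the components into a finite sum. Put $y=\sum_{\beta\ne0,-q}\frac{A_\beta}{\beta+q}L_{0,\beta}$. Then formally
\[
\tilde D=\ad_y+A_0D_2+B_0D_1+\bigl(A_{-q}D_3+\tfrac{2C_{-q}}qD_4\bigr),
\]
where the last bracket is present only when $q\in\mathbb Z^*$. We must show that $y$ is a genuine finite element, and this is the step we expect to need care. For that we evaluate at $L_{1,0}$. The vector $\tilde D(L_{1,0})$ is a finite sum, and its $L_{1,\beta}$-coefficient equals $f_\beta(1,0)=A_\beta$ for $\beta\neq0,-q$. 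Hence only finitely many $A_\beta$ are nonzero and $y\in\cS_{\bar0}$. Consequently $D=\ad_{x+y}+(\text{combination of }D_1,D_3,D_4)$, with $D_2$ absorbed into $\ad(\cS_{\bar0})$. This yields both bullets of the statement: for $q\notin\mathbb Z$ the value $\beta=-q$ never occurs in $\mathbb Z$, so only $D_1$ survives.
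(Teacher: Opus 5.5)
Your proposal is correct and follows essentially the same route as the paper: reduce via Lemma~\ref{lem:reduce}, solve each degree-$(0,\beta)$ component with Lemma~\ref{lem:even-solve}, split into the cases $\beta=0$, $\beta\neq0,-q$ and $\beta=-q$, and then check that $y$ is a finite sum. The only cosmetic difference is in the finiteness step: you bound the support of $A_\beta$ through the coefficients $f_\beta(1,0)=A_\beta$ of $\tilde D(L_{1,0})$, which is valid since $C_\beta=0$ for $\beta\neq-q$, whereas the paper uses $A_\beta=g_\beta(1,0)-g_\beta(0,0)$ read off from $\tilde D(G_{1,0})$ and $\tilde D(G_{0,0})$; both arguments work.
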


\begin{proof}
Let $\tilde D=D-\ad_x$ be as in Lemma~\ref{lem:reduce}. Fix $\beta$ and let $A_\beta,B_\beta,C_\beta$ be
given by Lemma~\ref{lem:even-solve}. Recall \eqref{useful}:
$[L_{0,\beta},X_{m,i}]=m(\beta+q)X_{m,i+\beta}$.
\begin{itemize}
\item If $\beta+q\ne0$ and $\beta\ne0$, then $C_\beta=0$ and $B_\beta=0$, so $f=g=A_\beta m$. The
degree-$(0,\beta)$ component of $\tilde D$ is therefore $\frac{A_\beta}{\beta+q}\ad_{L_{0,\beta}}$.
\item If $\beta=0$, then $C_0=0$ (as $q\neq0$) and $f=g=A_0m+B_0i$. The degree-$(0,0)$ component is
$\frac{A_0}{q}\ad_{L_{0,0}}+B_0D_1$.
\item If $\beta+q=0$, then $q=k\in\mathbb Z^*$ and $\beta=-k$, and $C=\frac k2 B$. Hence
$f=A m+B(i+k)$ and $g=A m+B(i+\frac k2)$, so the degree-$(0,-k)$ component is $A_{-k}D_3+B_{-k}D_4$.
\end{itemize}
Put $Y_\beta=A_\beta/(\beta+q)$ for $\beta+q\ne0$, and $Y_\beta=0$ otherwise. Since $A_\beta=g(1,0)-g(0,0)$,
$Y_\beta\ne0$ forces $\beta$ to lie in the finite set of second indices occurring in
$\tilde D(G_{1,0})$ or $\tilde D(G_{0,0})$. Hence $y=\sum_\beta Y_\beta L_{0,\beta}$ is a finite sum, and
\[
D=\ad_{x+y}+B_0D_1\ \bigl(+A_{-k}D_3+B_{-k}D_4\ \text{ if } q=k\in\mathbb Z^*\bigr). \qedhere
\]
\end{proof}

\medskip
\noindent\textbf{Odd derivations.}
Let $\tilde D$ be as in Lemma~\ref{lem:reduce} with $\theta=\bar1$, and for fixed $\beta$ put
\[
f(m,i)=\tilde D(L_{m,i})_{G_{m,i+\beta}},\qquad g(m,i)=\tilde D(G_{m,i})_{L_{m,i+\beta}}.
\]
The pairs $(L,L)$, $(L,G)$ and $(G,G)$ give
\begin{align}
\bigl(n(i+q)-m(j+q)\bigr)f(m+n,i+j)&\ =\ f(m,i)\bigl(n(i+\beta+\tfrac q2)-m(j+q)\bigr)\notag\\
&\ \ \quad+\bigl(n(i+q)-m(j+\beta+\tfrac q2)\bigr)f(n,j),\label{eq:odA}\\
\bigl(n(i+q)-m(j+\tfrac q2)\bigr)g(m+n,i+j)&\ =\ 2q\,f(m,i)+\bigl(n(i+q)-m(j+\beta+q)\bigr)g(n,j),\label{eq:odB}\\
2q\,f(m+n,i+j)&\ =\ g(m,i)\bigl(n(i+\beta+q)-m(j+\tfrac q2)\bigr)\notag\\
&\ \ \quad-\bigl(n(i+\tfrac q2)-m(j+\beta+q)\bigr)g(n,j).\label{eq:odC}
\end{align}

\begin{lemma}\label{lem:odd-solve}
Let $q\ne0$. Every solution of \eqref{eq:odA}--\eqref{eq:odC} satisfies
\begin{enumerate}
\item $f(m,i)=a\,m$ with $a=f(1,0)$;
\item $2q\,a=g(1,0)\bigl(\beta+\frac q2\bigr)$;
\item $g(m,i)=g(1,0)$ for all $(m,i)$, except possibly for $m=0$ and $i$ satisfying both
$i+\beta+q=0$ and $i+\frac{3q}{2}=0$.
\end{enumerate}
\end{lemma}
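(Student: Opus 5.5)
The plan is to extract everything from a few well-chosen specializations of \eqref{eq:odA}--\eqref{eq:odC}. Throughout I use $q\neq0$ to cancel $2q$. I work in the order: vanishing of $f$ on $m=0$, independence of $g$ from the second index, additivity of $f$, constancy of $g$, and finally the two exceptional linear conditions at $m=0$.

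\emph{Step 1 (first index zero).} Put $m=n=0$ in \eqref{eq:odC}. Both structure factors on the right vanish, so $2q\,f(0,k)=0$, i.e.\ $f(0,k)=0$ for all $k$.

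Next put $m=0$ in \eqref{eq:odB}. Using $f(0,i)=0$, this gives $n(i+q)\,g(n,i+j)=n(i+q)\,g(n,j)$. Hence for $n\neq0$ we get $g(n,j+i)=g(n,j)$ whenever $i\neq-q$. Choosing $i=1$ or $i=-1$ (one of them differs from $-q$) shows $g(n,\cdot)=:h(n)$ is independent of the second index for $n\neq0$.

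\emph{Step 2 ($f$ is linear in $m$).} Put $n=0$ in \eqref{eq:odB} with $m\neq0$:
\[
2q\,f(m,i)=-m\bigl(j+\tfrac q2\bigr)h(m)+m(j+\beta+q)\,g(0,j).
\]
The right side does not depend on $i$, so $f(m,i)=\varphi(m)$. This also holds for $m=0$, with $\varphi(0)=0$.

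Substitute $f=\varphi(m)$ into \eqref{eq:odA} and compare the coefficients of $i$, which are polynomial in $i$ for fixed $m,n,j$. This gives $n\varphi(m+n)=n\varphi(m)+n\varphi(n)$, so $\varphi$ is additive and $f(m,i)=am$ with $a=f(1,0)$. That is item 1.

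\emph{Step 3 ($h$ is constant, and item 2).} In \eqref{eq:odC} take $m,n\neq0$ with $m+n\neq0$ and compare the coefficients of $i$. This yields $n\,h(m)=n\,h(n)$, so $h(m)=h(n)$. Chaining through suitable pairs gives $h\equiv\gamma:=g(1,0)$ on $\mathbb Z\setminus\{0\}$.

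Then \eqref{eq:odC} with $i=j=0$ reads
\[
2qa(m+n)=\gamma(m+n)\bigl(\beta+\tfrac q2\bigr),
\]
which is item 2.

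\emph{Step 4 (the column $m=0$), which I expect to be the only delicate point.} Two independent equations are needed, since each alone only pins $g(0,k)$ off a single exceptional $k$.

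First, return to the identity of Step 2 with $h=\gamma$ and $f=am$. Using item 2 it becomes
\[
(j+\beta+q)\bigl(g(0,j)-\gamma\bigr)=0 .
\]

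Second, take $n=-m\neq0$ in \eqref{eq:odB}. The left side involves $g(0,i+j)$ and the right side involves only $a$ and $\gamma$. Simplifying with item 2 gives
\[
\bigl(k+\tfrac{3q}{2}\bigr)\bigl(g(0,k)-\gamma\bigr)=0,\qquad k=i+j .
\]

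Together these show $g(0,k)=\gamma$ unless both $k+\beta+q=0$ and $k+\tfrac{3q}2=0$, which is item 3. I would check that the $n=-m$ substitution really isolates $g(0,\cdot)$ with the stated factor; this is a short but sign-sensitive computation.
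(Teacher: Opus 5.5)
Your proof is correct and follows essentially the same route as the paper. Both arguments take $f(0,\cdot)=0$ from \eqref{eq:odC} at $m=n=0$, constancy of $g(n,\cdot)$ from \eqref{eq:odB} at $m=0$, additivity of $f$ from \eqref{eq:odA}, and the column $m=0$ from \eqref{eq:odB} at $n=0$ and at $n=-m$; your factors $(k+\beta+q)$ and $(k+\tfrac{3q}{2})$ check out. The only differences are cosmetic: you get $i$-independence of $f$ from \eqref{eq:odB} at $n=0$ rather than \eqref{eq:odA}, and you derive constancy of $g(m,\cdot)$ in $m$ and item 2 from \eqref{eq:odC} rather than \eqref{eq:odB}.
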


\begin{proof}
(1) Put $m=n=0$ in \eqref{eq:odC} to get $f(0,i)=0$. Now let $m\ne0$. With $n=0$, \eqref{eq:odA} gives
$m(j+q)\bigl(f(m,i+j)-f(m,i)\bigr)=0$. So $f(m,\cdot)$ is invariant under every shift $j\ne-q$, and
hence constant (every shift is a composite of two admissible ones). Comparing \eqref{eq:odA} for $(m,1,n,0)$ and $(m,0,n,0)$ gives
$n\bigl(f(m+n,0)-f(m,0)-f(n,0)\bigr)=0$. So $m\mapsto f(m,0)$ is additive, and $f(m,i)=am$.

(3), (2) With $m=0$, \eqref{eq:odB} and $f(0,\cdot)=0$ give $n(i+q)(g(n,i+j)-g(n,j))=0$. Hence $g(n,\cdot)$ is constant for $n\ne0$.
With $n=0$ and $m\ne0$, \eqref{eq:odB} gives
\begin{equation}\label{eq:star}
-\bigl(j+\tfrac q2\bigr)g(m,0)=2qa-(j+\beta+q)\,g(0,j).
\end{equation}
Comparing \eqref{eq:star} at $j=0$ for $m$ and for $m=1$ gives $g(m,0)=g(1,0)$ for all $m\ne0$, since $q/2\ne0$.
Then \eqref{eq:odB} with $(1,0,1,0)$ gives (2). Finally, for $m=0$: if $i+\beta+q\ne0$, then \eqref{eq:star}
together with (2) gives $g(0,i)=g(1,0)$. If $i+\frac{3q}{2}\ne0$, then \eqref{eq:odB} with $(1,0,-1,i)$
together with (2) gives the same conclusion.
\end{proof}

\begin{proposition}\label{prop:odd-qne0}
Let $q\ne0$ and $D\in\Der_{\bar1}(\cS(q))$. Then
\begin{itemize}
\item $D\in\ad(\cS_{\bar1})$ if $q\notin2\mathbb Z$;
\item $D\in\ad(\cS_{\bar1})+\langle D_6\rangle$ if $q\in2\mathbb Z^*$.
\end{itemize}
\end{proposition}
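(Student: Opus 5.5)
We follow the pattern of the proof of Proposition~\ref{prop:even-qne0}. By Lemma~\ref{lem:reduce} we replace $D$ by $\tilde D=D-\ad_x$ with $x\in\cS_{\bar1}$, so that $\tilde D$ preserves the first index. Then $\tilde D$ splits into $\mathbb Z^2$-homogeneous components of degree $(0,\beta)$. For each $\beta$ the component is described by the functions $f,g$ of \eqref{eq:odA}--\eqref{eq:odC}, and Lemma~\ref{lem:odd-solve} applies. Write $g_\beta:=g(1,0)$ and $a_\beta=f(1,0)$. Lemma~\ref{lem:odd-solve} gives $f(m,i)=a_\beta m$ with $2qa_\beta=g_\beta(\beta+\frac q2)$. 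It also gives $g(m,i)=g_\beta$ for all $(m,i)$, with the single possible exception $(0,i_0)$ where $i_0=-\beta-q=-\frac{3q}{2}$. This exception can occur only when $\beta=\frac q2$, and the index $i_0$ must be an integer, i.e.\ only when $q\in2\mathbb Z^*$.

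Next we compare each component with an inner derivation. By \eqref{GG} and \eqref{useful}, $\ad_{G_{0,\beta}}$ sends $G_{m,i}\mapsto 2qL_{m,i+\beta}$ and $L_{m,i}\mapsto -m(\beta+\frac q2)G_{m,i+\beta}$; the sign comes from super skew-symmetry. So $\ad_{G_{0,\beta}}$ has $g\equiv 2q$ and $f=-m(\beta+\frac q2)$. We will double-check this sign convention against the identity $2qa=g(1,0)(\beta+\frac q2)$. Since the relation between $a_\beta$ and $g_\beta$ forced by Lemma~\ref{lem:odd-solve}(2) is linear, the generic solution is determined by $g_\beta$ alone. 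Hence it coincides with $\frac{g_\beta}{2q}\ad_{G_{0,\beta}}$, after fixing signs consistently. Subtracting this leaves a component with $f\equiv0$, and with $g$ vanishing except possibly at $G_{0,-3q/2}$ when $\beta=q/2$. In that exceptional case the remaining map is $G_{0,-3q/2}\mapsto cL_{0,-q}$, which is $cD_6$ and exists only for $q\in2\mathbb Z^*$. For $q\notin2\mathbb Z$ no exception arises.

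Finiteness must be checked, as in Proposition~\ref{prop:even-qne0}. The coefficient $g_\beta$ is the $L_{1,\beta}$-coefficient of $\tilde D(G_{1,0})$, so it is nonzero for only finitely many $\beta$. Therefore $y=\sum_\beta\frac{g_\beta}{2q}G_{0,\beta}$ is a finite element of $\cS_{\bar1}$. This gives $D=\ad_{x+y}$, plus $cD_6$ when $q\in2\mathbb Z^*$.

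The main obstacle is a clean bookkeeping of the sign in $[G_{0,\beta},L_{m,i}]$ versus the convention in \eqref{eq:odB}. If the signs do not match, the identification with $\ad_{G_{0,\beta}}$ must be redone using $\ad_{G_{0,\beta}}$ written directly via \eqref{LG}. The matching is nonetheless guaranteed, since $\ad_{G_{0,\beta}}$ is itself a solution, and the solution space is one-dimensional away from the exceptional point.
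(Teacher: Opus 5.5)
Your proposal is correct and follows the paper's proof essentially step for step. You reduce via Lemma~\ref{lem:reduce}, apply Lemma~\ref{lem:odd-solve} to each degree-$(0,\beta)$ component, identify that component with $\frac{g_\beta(1,0)}{2q}\ad_{G_{0,\beta}}$ except for the single coefficient $G_{0,-3q/2}\mapsto L_{0,-q}$ at $\beta=q/2$ (which gives $D_6$), and get finiteness of $y$ from the finitely many nonzero coefficients of $\tilde D(G_{1,0})$.

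There is one small slip. By \eqref{useful}, $[G_{0,\beta},L_{m,i}]=+m(\beta+\tfrac q2)G_{m,i+\beta}$, with no minus sign. This is exactly what makes $\ad_{G_{0,\beta}}$ satisfy Lemma~\ref{lem:odd-solve}(2). Your fallback argument already makes the sign irrelevant: $\ad_{G_{0,\beta}}$ is itself a solution, and the solution space is one-dimensional off the exceptional coefficient.
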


\begin{proof}
Let $\tilde D=D-\ad_x$ be as in Lemma~\ref{lem:reduce}. We have
$[G_{0,\beta},L_{m,i}]=m(\beta+\frac q2)G_{m,i+\beta}$ and $[G_{0,\beta},G_{m,i}]=2q\,L_{m,i+\beta}$.
By Lemma~\ref{lem:odd-solve}, the degree-$(0,\beta)$ component of $\tilde D$ coincides with
$Y_\beta\ad_{G_{0,\beta}}$, where $Y_\beta=g_\beta(1,0)/(2q)$, except possibly at the single
coefficient $g_\beta(0,i)$ with $i+\beta+q=0=i+\frac{3q}{2}$. Such an exceptional coefficient
requires $q=2\beta$, i.e.\ $q=2k\in2\mathbb Z^*$ with $\beta=k$ and $i=-3k$. At this coefficient the
difference is $\bigl(g_k(0,-3k)-g_k(1,0)\bigr)D_6$. As before, $Y_\beta\ne0$ only for the finitely many $\beta$
occurring in $\tilde D(G_{1,0})$, so $y=\sum Y_\beta G_{0,\beta}\in\cS_{\bar1}$. Hence
$D=\ad_{x+y}$ if $q\notin2\mathbb Z$, and $D=\ad_{x+y}+\bigl(g_k(0,-3k)-g_k(1,0)\bigr)D_6$ if $q=2k$.
\end{proof}

\subsection{\normalsize Derivations of $\cS(0)$}\label{sec:q0}

For $q=0$ we have $[L_{0,0},\cdot\,]=0$, so Lemma~\ref{lem:reduce} is not available. Instead we
use the structure $\cS(0)=\cB(0)\ltimes M$. Write $\alpha=(m,i)$, $L_\alpha=L_{m,i}$, $G_\alpha=G_{m,i}$ and
\begin{equation}\label{eq:B0}
s(\alpha,\beta)=\beta_1\alpha_2-\alpha_1\beta_2\qquad(\alpha,\beta\in\mathbb Z^2).
\end{equation}
Then $s$ is bilinear and alternating, and
\[
[L_\alpha,L_\beta]=s(\alpha,\beta)L_{\alpha+\beta},\qquad
[L_\alpha,G_\beta]=s(\alpha,\beta)G_{\alpha+\beta},\qquad
[G_\alpha,G_\beta]=0.
\]

For a function $F\colon\mathbb Z^2\to\mathbb C$ and $\sigma\in\mathbb Z^2$ consider the equations
\begin{align}
s(\alpha,\beta)F(\alpha+\beta)&=F(\alpha)\,s(\alpha+\sigma,\beta)+s(\alpha,\beta+\sigma)F(\beta),\tag{E$_\sigma$}\label{eq:E}\\
s(\alpha,\beta)H(\alpha+\beta)&=s(\alpha,\beta+\sigma)H(\beta).\tag{Z$_\sigma$}\label{eq:Z}
\end{align}
Equation (E$_\sigma$) says that $L_\alpha\mapsto F(\alpha)L_{\alpha+\sigma}$ is a derivation of $\cB(0)$ of degree $\sigma$;
equation (Z$_\sigma$) says that the map $H$ commutes with the action of $\cB(0)$ in degree $\sigma$.

\begin{lemma}\label{lem:der-ne}
Let $\sigma\ne0$, let $F$ satisfy \textup{(E$_\sigma$)}, and choose $\rho$ with $s(\sigma,\rho)\ne0$. Then
$F(\alpha)=t\,s(\sigma,\alpha)$ for all $\alpha$, where $t=F(\rho)/s(\sigma,\rho)$. In other words, the component is $t\,\ad_{L_\sigma}$.
\end{lemma}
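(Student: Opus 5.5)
The plan is to subtract the inner part first and then show that what remains vanishes. Since $[L_\sigma,L_\alpha]=s(\sigma,\alpha)L_{\alpha+\sigma}$, the function $\alpha\mapsto s(\sigma,\alpha)$ solves (E$_\sigma$), because $\ad_{L_\sigma}$ is a derivation. Equation (E$_\sigma$) is linear in $F$, so
\[
G(\alpha):=F(\alpha)-t\,s(\sigma,\alpha)
\]
again satisfies (E$_\sigma$), and $G(\rho)=0$ by the choice of $t$. It then suffices to prove that a solution of (E$_\sigma$) vanishing at one point $\rho$ with $s(\sigma,\rho)\ne0$ vanishes identically.

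First I collect special substitutions in (E$_\sigma$).
\begin{itemize}
\item With $\beta=0$ we get $0=s(\alpha,\sigma)G(0)$. Choosing $\alpha=\rho$ gives $G(0)=0$.
\item With $\beta=-\alpha$ we get $s(\alpha,\sigma)\bigl(G(\alpha)+G(-\alpha)\bigr)=0$, so $G$ is odd off the "line" $\Lambda=\{\alpha: s(\sigma,\alpha)=0\}$.
\item With $\alpha=\sigma$ we get, using bilinearity, $s(\sigma,\beta)G(\beta+\sigma)=s(\sigma,\beta)\bigl(2G(\sigma)+G(\beta)\bigr)$. Hence $G(\beta+\sigma)=G(\beta)+2G(\sigma)$ for all $\beta\notin\Lambda$.
\end{itemize}
Next I feed in $\rho$.
\begin{itemize}
\item With $\beta=\rho$ we get $s(\alpha,\rho)G(\alpha+\rho)=s(\alpha+\sigma,\rho)G(\alpha)$.
\item With $\alpha=\rho$ we get $s(\rho,\beta)G(\rho+\beta)=s(\rho,\beta+\sigma)G(\beta)$.
\end{itemize}
Put $\beta=-\rho+\gamma$ and compare the two. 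Using oddness of $G$ and the translation rule along $\sigma$, I aim to show $G(\sigma)=0$. Concretely, I evaluate the $\alpha=\sigma$ relation at $\beta=\rho$ and $\beta=\rho-\sigma$ (both off $\Lambda$), and combine this with the $\beta=\rho$ relation at $\alpha=-\sigma$. After that, $G$ is $\sigma$-periodic off $\Lambda$.

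The main step is to propagate $G=0$ from $\rho$ to every point off $\Lambda$. I would use the relation $s(\alpha,\rho)G(\alpha+\rho)=s(\alpha+\sigma,\rho)G(\alpha)$, together with its analogue obtained after replacing $\rho$ by any point already known to be a zero of $G$. Each relation moves the zero set by an arbitrary step whenever the relevant coefficient $s(\cdot,\cdot)$ is nonzero. Since $\sigma$-translates of $\rho$ and multiples obtained via these moves generate $\mathbb Z^2$, and since a "bad" step with vanishing coefficient can be bypassed by a two-step path (as in the proof of Lemma~\ref{lem:odd-solve}), this should give $G(\alpha)=0$ for all $\alpha\notin\Lambda$.

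Finally, for $\alpha\in\Lambda$ I pick $\beta$ with $s(\alpha,\beta)\ne0$ and with $\beta,\alpha+\beta\notin\Lambda$. Then (E$_\sigma$) reduces to $0=G(\alpha)s(\alpha+\sigma,\beta)$. Since $s(\sigma,\alpha)=0$, both $\alpha$ and $\sigma$ lie on the rational line $\Lambda$, so $\alpha+\sigma\in\Lambda$ as well. If $\alpha+\sigma\neq0$, then $s(\alpha+\sigma,\beta)\neq0$ for $\beta\notin\Lambda$, and $G(\alpha)=0$ follows. The lone point $\alpha=-\sigma$ is handled using oddness and $G(\sigma)=0$, or via (E$_\sigma$) with $\alpha=-\sigma$ and generic $\beta$, which yields $G(-\sigma)s(0,\beta)$ terms against known zeros.

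I expect the main obstacle to be the propagation step: keeping track of which coefficients $s(\cdot,\cdot)$ may vanish, and making sure the chain of substitutions reaches every lattice point.
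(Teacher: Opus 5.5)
Your preliminary steps are correct: $G(0)=0$, oddness off $\Lambda$, the translation rule along $\sigma$, and your derivation of $G(\sigma)=0$ (from $G(\rho+\sigma)=2G(\sigma)$, $G(\rho-\sigma)=-2G(\sigma)$ and $s(-\sigma,\rho)G(\rho-\sigma)=0$) all check out. So $G$ is $\sigma$-periodic off $\Lambda$.

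The genuine gap is the propagation step, which is the heart of the lemma. You do not carry it out, and the mechanism you sketch cannot work. Passing from a known zero $\alpha$ to $\alpha+\rho'$, with $\rho'$ itself a known zero, never leaves the subgroup $\mathbb Z\rho+\mathbb Z\sigma$. That subgroup contains all your initial zeros, and periodicity and oddness preserve it. Its index in $\mathbb Z^2$ is $|s(\sigma,\rho)|$. It is therefore proper whenever $|s(\sigma,\rho)|>1$, and for non-primitive $\sigma$ it is proper for every admissible $\rho$ (e.g.\ $\sigma=(0,2)$ gives $s(\sigma,\rho)=2\rho_1$). Two-step bypasses stay inside the same subgroup. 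So your claim that these moves reach all of $\mathbb Z^2\setminus\Lambda$ is false.

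The missing idea is to combine two relations with periodicity, rather than using them for zero-propagation. Write \textup{(E$_\sigma$)} at $(\alpha,\rho')$ and at $(\alpha,\rho'+\sigma)$, where $G(\rho')=G(\rho'+\sigma)=0$, and replace $G(\alpha+\rho'+\sigma)$ by $G(\alpha+\rho')$. Subtracting gives $s(\alpha,\sigma)\bigl(G(\alpha+\rho')-G(\alpha)\bigr)=0$, hence $G(\alpha+\rho')=G(\alpha)$. The first relation then collapses to $s(\sigma,\rho')G(\alpha)=0$. With $\rho'=\rho$ this kills $G(\alpha)$ directly whenever $\alpha,\alpha+\rho\notin\Lambda$; in particular $G(2\rho)=0$. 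With $\rho'=2\rho$ it handles the remaining $\alpha\notin\Lambda$, because then $s(\sigma,\alpha+2\rho)=s(\sigma,\rho)\neq0$. This is exactly the content of steps (3)--(4) in the paper, phrased there as additivity plus $G(\alpha)s(\sigma,\beta)=G(\beta)s(\sigma,\alpha)$.

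Your treatment of the point $\alpha=-\sigma$ also fails. Oddness was proved only off $\Lambda$: at $\alpha=\sigma$ its coefficient is $s(\sigma,\sigma)=0$. In \textup{(E$_\sigma$)} with $\alpha=-\sigma$, the coefficient of $G(-\sigma)$ is $s(0,\beta)=0$, so $G(-\sigma)$ does not occur at all. You must put $-\sigma$ on the left-hand side as a sum. Then \textup{(E$_\sigma$)} at $(\rho,-\sigma-\rho)$ gives $s(\sigma,\rho)G(-\sigma)=0$, since both right-hand coefficients $s(\rho+\sigma,-\sigma-\rho)$ and $s(\rho,-\rho)$ vanish. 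The paper handles all of $\Lambda\setminus\{0\}$ uniformly this way, via \textup{(E$_\sigma$)} at $(\rho,\alpha-\rho)$. Your argument for the other points of $\Lambda$ is fine once $G=0$ off $\Lambda$ is known.
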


\begin{proof}
The function $\alpha\mapsto s(\sigma,\alpha)$ solves (E$_\sigma$), so $G=F-t\,s(\sigma,\cdot)$ solves
(E$_\sigma$) and satisfies $G(\rho)=0$. We show that $G\equiv0$.
\begin{enumerate}
\item Combining (E$_\sigma$) at $(\rho,-\rho)$, $(-\rho,\sigma)$ and $(\rho,\sigma-\rho)$ gives
$3s(\sigma,\rho)G(\sigma)=0$, so $G(\sigma)=0$.
\item (E$_\sigma$) at $(x,\sigma)$ gives $s(x,\sigma)G(x+\sigma)=s(x,\sigma)G(x)$. So $G(x+\sigma)=G(x)$ whenever $s(\sigma,x)\neq0$.
\item If $s(\sigma,\alpha),s(\sigma,\beta),s(\sigma,\alpha+\beta)$ are all nonzero, then comparing (E$_\sigma$) at
$(\alpha,\beta)$ and $(\alpha,\beta+\sigma)$, using (2), gives $G(\alpha+\beta)=G(\alpha)+G(\beta)$. Then (E$_\sigma$)
at $(\alpha,\beta)$ becomes $G(\alpha)s(\sigma,\beta)=G(\beta)s(\sigma,\alpha)$.
\item If $s(\sigma,\alpha)\ne0$, apply (3) with $\beta=\rho$, or with $\beta=2\rho$ when $s(\sigma,\alpha+\rho)=0$ (first obtaining $G(2\rho)=0$ from (3) with $\alpha=\rho$, $\beta=\rho$). This gives $G(\alpha)=0$.
\item (E$_\sigma$) at $(\rho,0)$ gives $s(\rho,\sigma)G(0)=0$, so $G(0)=0$.
\item Let $s(\sigma,\alpha)=0$. If also $s(\rho,\alpha)=0$, then $\alpha=0$ because $\sigma,\rho$ are linearly independent. Otherwise
(E$_\sigma$) at $(\rho,\alpha-\rho)$ gives $s(\rho,\alpha)G(\alpha)=0$, since $G(\rho)=0$ and $G(\alpha-\rho)=0$ by (4)
(note $s(\sigma,\alpha-\rho)=-s(\sigma,\rho)\ne0$). \qedhere
\end{enumerate}
\end{proof}

\begin{lemma}\label{lem:der-zero}
If $F$ satisfies \textup{(E$_0$)}, then $F(\alpha)=a\,\alpha_2+b\,\alpha_1$ for all $\alpha\ne0$, where
$a=F(0,1)$ and $b=F(1,0)$. The value $F(0)$ is arbitrary.
\end{lemma}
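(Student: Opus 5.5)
Write (E$_0$) explicitly:
\[
s(\alpha,\beta)F(\alpha+\beta)=F(\alpha)s(\alpha,\beta)+s(\alpha,\beta)F(\beta),
\]
that is, $s(\alpha,\beta)\bigl(F(\alpha+\beta)-F(\alpha)-F(\beta)\bigr)=0$. So $F$ is additive on every pair $\alpha,\beta$ that is linearly independent over $\mathbb Q$. The value $F(0)$ never appears with a nonzero coefficient: $\alpha+\beta=0$ forces $s(\alpha,\beta)=0$, and $\alpha=0$ or $\beta=0$ also gives $s=0$. Hence $F(0)$ is arbitrary. We then put $G(\alpha)=F(\alpha)-a\alpha_2-b\alpha_1$ for $\alpha\neq0$. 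This $G$ again satisfies additivity on independent pairs, because linear functions do, and $G(1,0)=G(0,1)=0$. It remains to show $G\equiv0$ on $\mathbb Z^2\setminus\{0\}$.

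\emph{Step 1 (first reduction).} Applying additivity to the independent pair $(1,0),(0,1)$ gives $G(1,1)=0$. More generally, for $\alpha=(m,i)$ with $m\neq0$ and $i\neq0$, the vectors $(m,0)$ and $(0,i)$ are independent, so $G(m,i)=G(m,0)+G(0,i)$. It therefore suffices to show $G(m,0)=0$ and $G(0,i)=0$ for all $m,i\neq0$.

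\emph{Step 2 (the axes; main obstacle).} Along a single line, additivity is not directly available, because $s$ vanishes there. So we pass through an independent vector. Since $(m,0)$ and $(0,1)$ are independent,
\[
G(m,1)=G(m,0)+G(0,1)=G(m,0).
\]
We also want to decompose $(m,1)=(m-1,0)+(1,1)$, where the two summands are independent whenever $m\neq1$. This gives $G(m,1)=G(m-1,0)$ for $m\neq1$, using $G(1,1)=0$; for $m=1$ the relation $G(1,1)=G(0,0)+\dots$ is not allowed. Combining the two, $G(m,0)=G(m-1,0)$ for $m\neq1$, and $m-1\neq0$ is needed for $G(m-1,0)$ to be part of our domain. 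Starting from $G(1,0)=0$, induction upward gives $G(m,0)=0$ for all $m\geq1$.

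For negative $m$ we argue differently. First, $(m,0)+(0,1)=(m,1)$ and $(-m,0)+(m,1)=(0,1)$ are both independent decompositions, so
\[
G(-m,0)+G(m,1)=0 \quad\text{and}\quad G(m,1)=G(m,0),
\]
which give $G(-m,0)=-G(m,0)=0$. The symmetric argument, swapping the roles of the coordinates, handles $G(0,i)$. Together with Step 1 this yields $G\equiv0$ off the origin.

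Alternatively, the axis cases can be handled in one stroke with the general identity used in Lemma~\ref{lem:der-ne}(3)-type arguments. For $\alpha\neq0$ on a line, pick $\gamma$ independent of $\alpha$, so that $\alpha+\gamma$ is independent of both $\alpha$ and $\gamma$ when $\gamma$ is chosen generic. Then compute $G(2\alpha+\gamma)$ in two ways:
\[
G(2\alpha+\gamma)=G(\alpha)+G(\alpha+\gamma)=2G(\alpha)+G(\gamma),
\]
and also
\[
G(2\alpha+\gamma)=G(2\alpha)+G(\gamma).
\]
This yields $G(2\alpha)=2G(\alpha)$, and similarly $G(k\alpha)=kG(\alpha)$ for $k\neq0$. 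That route is an alternative if the induction bookkeeping becomes messy.

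The only delicate point is making sure every decomposition used consists of genuinely independent vectors, since $s\neq0$ is required; this is where I expect care to be needed.
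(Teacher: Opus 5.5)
Your proof is correct and follows essentially the paper's route. Both subtract the linear part, use additivity on pairs with $s(\alpha,\beta)\neq0$ through the auxiliary row $\{(m,1)\}$ and column $\{(1,i)\}$ to kill $G$ on the axes, and finish with $G(m,i)=G(m,0)+G(0,i)$. The only difference is bookkeeping: the paper propagates along the row using $s((m,1),(1,0))=1$ for every $m$, which covers all $m\in\mathbb Z$ in one two-sided induction, whereas you induct along the axis by comparing two decompositions of $(m,1)$ and then need the extra step $(-m,0)+(m,1)=(0,1)$ for negative indices.
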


\begin{proof}
Let $G(\alpha)=F(\alpha)-a\alpha_2-b\alpha_1$. Then $G$ satisfies (E$_0$) and $G(1,0)=G(0,1)=0$, and
(E$_0$) says $G(\alpha+\beta)=G(\alpha)+G(\beta)$ whenever $s(\alpha,\beta)\ne0$. Since $s((m,1),(1,0))=1$, we get
$G(m+1,1)=G(m,1)+G(1,0)=G(m,1)$ for all $m\in\mathbb Z$, hence $G(m,1)=G(0,1)=0$ for all $m$.
Similarly, $s((1,i),(0,1))=-1$ gives $G(1,i+1)=G(1,i)$, hence $G(1,i)=G(1,0)=0$ for all $i$.
Next, $s((-2,1),(1,-1))=-1$ gives $G(-1,0)=G(-2,1)+G(1,-1)=0$.
For $j\ne0$ we have $s((1,j),(-1,0))=-j\neq0$, so $G(0,j)=G(1,j)+G(-1,0)=0$;
for $n\neq0$ we have $s((n,1),(0,-1))=n\neq0$, so $G(n,0)=G(n,1)+G(0,-1)=0$.
Finally, for $mi\ne0$ we have $s((m,0),(0,i))=-mi\neq0$, so $G(m,i)=G(m,0)+G(0,i)=0$.
\end{proof}

\begin{lemma}\label{lem:cent-ne}
If $\sigma\ne0$ and $H$ satisfies \textup{(Z$_\sigma$)}, then $H\equiv0$.
\end{lemma}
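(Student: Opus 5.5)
The plan is to exploit the degeneracy of \textup{(Z$_\sigma$)} on well-chosen pairs $(\alpha,\beta)$. This gives three steps: first kill $H$ at all points not parallel to $\sigma$, then at nonzero-shifted points parallel to $\sigma$, and finally at the single remaining point $-\sigma$. Only bilinearity and the alternating property of $s$ from \eqref{eq:B0} are used.

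\emph{Step 1: points not parallel to $\sigma$.} Put $\alpha=-\beta$ in \textup{(Z$_\sigma$)}. The left-hand side vanishes because $s(-\beta,\beta)=0$. The right-hand side is
\[
s(-\beta,\beta+\sigma)H(\beta)=-s(\beta,\sigma)H(\beta).
\]
Hence $H(\beta)=0$ whenever $s(\beta,\sigma)\neq0$.

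\emph{Step 2: points parallel to $\sigma$ other than $-\sigma$.} Let $\beta$ satisfy $s(\beta,\sigma)=0$, and choose $\alpha$ with $s(\alpha,\sigma)\ne0$, which is possible since $\sigma\ne0$. Then
\[
s(\alpha+\beta,\sigma)=s(\alpha,\sigma)\neq0,
\]
so $H(\alpha+\beta)=0$ by Step~1, and \textup{(Z$_\sigma$)} reduces to $s(\alpha,\beta+\sigma)H(\beta)=0$. The vector $\beta+\sigma$ is parallel to $\sigma$ (as $s(\beta+\sigma,\sigma)=0$ and $\sigma\ne0$). If $\beta+\sigma\neq0$, then $\beta+\sigma=c\,\sigma$ with $c\in\mathbb Q^*$, so $s(\alpha,\beta+\sigma)=c\,s(\alpha,\sigma)\neq0$. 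This gives $H(\beta)=0$ for every $\beta\neq-\sigma$ parallel to $\sigma$, including $\beta=0$.

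\emph{Step 3: the point $-\sigma$.} This is the only point not covered by the above, and it is the one I expect to need care, since it must appear on the left-hand side of the equation rather than the right. Choose $\rho$ with $s(\sigma,\rho)\neq0$, and apply \textup{(Z$_\sigma$)} with $\beta=\rho$ and $\alpha=-\sigma-\rho$, so that $\alpha+\beta=-\sigma$. The right-hand side vanishes because $H(\rho)=0$ by Step~1. On the left,
\[
s(\alpha,\beta)=s(-\sigma-\rho,\rho)=-s(\sigma,\rho)\neq0,
\]
so $H(-\sigma)=0$. Combining the three steps gives $H\equiv0$.
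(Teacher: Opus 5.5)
Your proof is correct and is essentially the paper's argument: the substitution $(-\beta,\beta)$ kills $H$ at every point not parallel to $\sigma$, and a fixed $\rho$ with $s(\sigma,\rho)\neq0$ then propagates the vanishing to the points parallel to $\sigma$. The only difference is bookkeeping: the paper gets $H(0)=0$ from the pair $(\rho,0)$ and treats every nonzero $\alpha$ parallel to $\sigma$ at once via $(\rho,\alpha-\rho)$, with $H(\alpha)$ on the left-hand side, whereas you put the parallel points on the right-hand side and therefore need the separate Step~3 for $-\sigma$.
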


\begin{proof}
(Z$_\sigma$) at $(-y,y)$ gives $s(\sigma,y)H(y)=0$. Choose $\rho$ with $s(\sigma,\rho)\ne0$. Then
(Z$_\sigma$) at $(\rho,0)$ gives $H(0)=0$. If $s(\sigma,\alpha)=0$ and $\alpha\ne0$, then $s(\rho,\alpha)\ne0$, and
(Z$_\sigma$) at $(\rho,\alpha-\rho)$ gives $s(\rho,\alpha)H(\alpha)=0$.
\end{proof}

\begin{lemma}\label{lem:cent-zero}
If $H$ satisfies \textup{(Z$_0$)}, then $H$ is constant on $\mathbb Z^2\setminus\{0\}$.
\end{lemma}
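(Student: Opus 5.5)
With $\sigma=0$, equation (Z$_0$) reads $s(\alpha,\beta)H(\alpha+\beta)=s(\alpha,\beta)H(\beta)$. So whenever $s(\alpha,\beta)\neq0$ we get the transfer rule
\[
H(\alpha+\beta)=H(\beta).
\]
Because $s$ is alternating, $s(\alpha,\beta)\ne0$ exactly when $s(\alpha+\beta,\beta)\ne0$. The plan is therefore: \emph{if $\gamma,\beta\in\mathbb Z^2$ are linearly independent, then $H(\gamma)=H(\beta)$}. To see this, take $\alpha=\gamma-\beta$; then $s(\alpha,\beta)=s(\gamma,\beta)\ne0$, and the transfer rule gives $H(\gamma)=H(\beta)$.

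It remains to compare $H$ at two nonzero vectors $\gamma,\gamma'$ that are linearly dependent, i.e.\ proportional. Pick any $\beta$ not proportional to $\gamma$; for instance $\beta=(0,1)$ if $\gamma_1\neq0$, and $\beta=(1,0)$ otherwise. Then $\beta$ is linearly independent from both $\gamma$ and $\gamma'$, so
\[
H(\gamma)=H(\beta)=H(\gamma').
\]
Hence $H$ takes a single value on $\mathbb Z^2\setminus\{0\}$.

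Nothing can be said about $H(0)$. Setting $\alpha+\beta=0$ gives $s(\alpha,-\alpha)=0$, and setting $\beta=0$ gives $s(\alpha,0)=0$. So every instance of (Z$_0$) that involves $H(0)$ has a vanishing coefficient, and $H(0)$ is left free. This matches the statement, which only asserts constancy on $\mathbb Z^2\setminus\{0\}$. This freedom is exactly what produces the central maps $E_L,E_G$ later.

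The only point needing care is the nondegeneracy step: justifying $s(\alpha,\beta)\neq0$ for the choice $\alpha=\gamma-\beta$. That follows from bilinearity and alternation, since
\[
s(\gamma-\beta,\beta)=s(\gamma,\beta)-s(\beta,\beta)=s(\gamma,\beta).
\]
No real obstacle is expected.
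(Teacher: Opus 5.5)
Your proof is correct and follows the paper's argument. The paper likewise applies (Z$_0$) at $(x-\beta,\beta)$ to get $H(x)=H(\beta)$ whenever $s(x,\beta)\neq0$, and then links any two nonzero vectors through $(1,0)$ or $(0,1)$; your treatment of the proportional case simply makes that choice of intermediate vector explicit.
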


\begin{proof}
(Z$_0$) at $(x-\beta,\beta)$ gives $H(x)=H(\beta)$ whenever $s(x,\beta)\ne0$. Any two nonzero vectors can be connected by a chain of such steps via $(1,0)$ and $(0,1)$.
\end{proof}

\begin{proposition}\label{prop:even-q0}
$\Der_{\bar0}(\cS(0))=\ad(\cS_{\bar0})+\langle D_1,D_2,E_L,E_G,T\rangle$.
\end{proposition}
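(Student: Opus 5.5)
Since $q=0$, both $\cS(0)$ and every derivation split naturally along the $\mathbb Z^2$-grading. Given $D\in\Der_{\bar0}(\cS(0))$, I decompose it into homogeneous components of degree $\sigma\in\mathbb Z^2$. For each basis vector only finitely many components are nonzero, and each component $D_\sigma$ is again a derivation. Write
\[
D_\sigma(L_\alpha)=F(\alpha)L_{\alpha+\sigma},\qquad D_\sigma(G_\alpha)=K(\alpha)G_{\alpha+\sigma}.
\]
No odd targets occur because $D$ is even. The pair $(L_\alpha,L_\beta)$ says that $F$ satisfies (E$_\sigma$). The pair $(L_\alpha,G_\beta)$ gives
\[
s(\alpha,\beta)K(\alpha+\beta)=F(\alpha)s(\alpha+\sigma,\beta)+s(\alpha,\beta+\sigma)K(\beta).
\]
The pair $(G,G)$ gives nothing, since $[G,G]=0$.

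\textbf{Case $\sigma\ne0$.} Lemma~\ref{lem:der-ne} gives $F=t\,s(\sigma,\cdot)$. The function $K'=t\,s(\sigma,\cdot)$ solves the mixed equation, because $\ad_{L_\sigma}$ is a derivation. Hence $H=K-K'$ satisfies (Z$_\sigma$), and Lemma~\ref{lem:cent-ne} forces $H\equiv0$. Therefore $D_\sigma=t_\sigma\ad_{L_\sigma}$.

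\textbf{Case $\sigma=0$.} Lemma~\ref{lem:der-zero} gives $F(\alpha)=a\alpha_2+b\alpha_1$ for $\alpha\neq0$, with $F(0)$ arbitrary. Set
\[
D_0-aD_1-bD_2-F(0)E_L=:D'.
\]
This $D'$ kills all $L_\alpha$, so the mixed equation for $D'$ reduces to (Z$_0$) for $K'$. Lemma~\ref{lem:cent-zero} then shows that $K'$ is constant, say $c$, on $\mathbb Z^2\setminus\{0\}$. Consequently $D'=cT+(K'(0)-c)E_G$.

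\textbf{Finiteness.} It remains to check that $\sum_\sigma t_\sigma L_\sigma$ is a finite sum, so that the inner part is $\ad_y$ for a genuine element $y\in\cS_{\bar0}$. I expect this to be the main obstacle, since the components $t_\sigma\ad_{L_\sigma}$ must be controlled uniformly. I plan to fix $\rho_1=(1,0)$ and $\rho_2=(0,1)$. Every $\sigma\ne0$ has $s(\sigma,\rho_j)\ne0$ for some $j$, and then
\[
t_\sigma=F_\sigma(\rho_j)/s(\sigma,\rho_j).
\]
Now $F_\sigma(\rho_j)$ is the coefficient of $L_{\rho_j+\sigma}$ in $D(L_{\rho_j})$. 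So $t_\sigma\ne0$ only for $\sigma$ among the finitely many degrees occurring in $D(L_{1,0})$ or $D(L_{0,1})$. Thus $y=\sum t_\sigma L_\sigma$ is finite, and
\[
D=\ad_y+aD_1+bD_2+F(0)E_L+K'(0)E_G-cE_G+cT,
\]
which proves the proposition. One small point still needs care: the mixed equation must be correctly reduced to (Z$_\sigma$) after subtracting the ad-part, and this works because $\ad_{L_\sigma}$ acts on $G$ by the same coefficient $s(\sigma,\cdot)$.
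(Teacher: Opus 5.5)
Your proof is correct and follows essentially the same route as the paper: you split $D$ into $\mathbb Z^2$-homogeneous components, apply Lemmas~\ref{lem:der-ne} and~\ref{lem:cent-ne} for $\sigma\neq0$ and Lemmas~\ref{lem:der-zero} and~\ref{lem:cent-zero} for $\sigma=0$, and get finiteness of $\sum_\sigma t_\sigma L_\sigma$ from the supports of $D(L_{1,0})$ and $D(L_{0,1})$. The only cosmetic difference is that you first subtract known derivations ($t_\sigma\ad_{L_\sigma}$, respectively $aD_1+bD_2+F(0)E_L$) before invoking the centralizer lemmas, whereas the paper directly notes that $g_\sigma-f_\sigma$ satisfies (Z$_\sigma$); this is the same computation.
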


\begin{proof}
Let $D$ be an even derivation. For $\sigma\in\mathbb Z^2$ put \begin{center}
    $f_\sigma(\alpha)=D(L_\alpha)_{L_{\alpha+\sigma}}$ and
$g_\sigma(\alpha)=D(G_\alpha)_{G_{\alpha+\sigma}}$.
\end{center} The pair $(L_\alpha,L_\beta)$ shows that $f_\sigma$ satisfies
(E$_\sigma$). The pair $(L_\alpha,G_\beta)$ gives
\[
s(\alpha,\beta)g_\sigma(\alpha+\beta)=f_\sigma(\alpha)s(\alpha+\sigma,\beta)+s(\alpha,\beta+\sigma)g_\sigma(\beta).
\]
Subtracting, we see that $h_\sigma=g_\sigma-f_\sigma$ satisfies (Z$_\sigma$).

For $\sigma\ne0$, Lemmas~\ref{lem:der-ne} and~\ref{lem:cent-ne} give $f_\sigma=g_\sigma=t_\sigma s(\sigma,\cdot)$. So the
degree-$\sigma$ component of $D$ is $t_\sigma\ad_{L_\sigma}$. Moreover $t_\sigma\ne0$ forces $f_\sigma(1,0)\ne0$ or
$f_\sigma(0,1)\ne0$, which happens only for finitely many $\sigma$. Hence $x=\sum_\sigma t_\sigma L_\sigma\in\cS_{\bar0}$.

For $\sigma=0$, Lemmas~\ref{lem:der-zero} and~\ref{lem:cent-zero} give
$f_0(\alpha)=a\alpha_2+b\alpha_1$ and $g_0(\alpha)=a\alpha_2+b\alpha_1+t$ for $\alpha\ne0$.
The values $f_0(0)=c$ and $g_0(0)$ are arbitrary. Hence
\[
D=\ad_x+aD_1+bD_2+cE_L+\bigl(g_0(0)-t\bigr)E_G+tT. \qedhere
\]
\end{proof}

\begin{proposition}\label{prop:odd-q0}
$\Der_{\bar1}(\cS(0))=\ad(\cS_{\bar1})+\langle O_1,O_2,O_5,\Th,\Th_0\rangle$.
\end{proposition}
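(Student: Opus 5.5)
We mirror the proof of Proposition~\ref{prop:even-q0}. Let $D$ be an odd derivation of $\cS(0)$. For $\sigma\in\mathbb Z^2$ put
\begin{center}
$f_\sigma(\alpha)=D(L_\alpha)_{G_{\alpha+\sigma}}$ and $g_\sigma(\alpha)=D(G_\alpha)_{L_{\alpha+\sigma}}$.
\end{center}
We read off the three families of scalar equations from the pairs $(L_\alpha,L_\beta)$, $(L_\alpha,G_\beta)$ and $(G_\alpha,G_\beta)$, using $[L_\alpha,X_\beta]=s(\alpha,\beta)X_{\alpha+\beta}$ and $[G,G]=0$.

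The pair $(L_\alpha,L_\beta)$ gives
\[
s(\alpha,\beta)f_\sigma(\alpha+\beta)=f_\sigma(\alpha)s(\alpha+\sigma,\beta)+s(\alpha,\beta+\sigma)f_\sigma(\beta).
\]
Here we use that $[G_{\alpha+\sigma},L_\beta]=-s(\beta,\alpha+\sigma)G=s(\alpha+\sigma,\beta)G$. So $f_\sigma$ satisfies (E$_\sigma$).

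The pair $(L_\alpha,G_\beta)$ has $D(L_\alpha)$ odd, so $[D(L_\alpha),G_\beta]=0$, and the sign $(-1)^{\theta|L_\alpha|}=1$. This gives
\[
s(\alpha,\beta)g_\sigma(\alpha+\beta)=s(\alpha,\beta+\sigma)g_\sigma(\beta),
\]
so $g_\sigma$ satisfies (Z$_\sigma$).

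The pair $(G_\alpha,G_\beta)$ gives
\[
0=[D(G_\alpha),G_\beta]-[G_\alpha,D(G_\beta)]=g_\sigma(\alpha)s(\alpha+\sigma,\beta)G-g_\sigma(\beta)s(\beta+\sigma,\alpha)G\cdot(\pm1).
\]
I will check the sign carefully, but it should only add constraints.

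For $\sigma\neq0$, Lemma~\ref{lem:cent-ne} gives $g_\sigma\equiv0$. Lemma~\ref{lem:der-ne} gives $f_\sigma=t_\sigma s(\sigma,\cdot)$. Now compare with $\ad_{G_\sigma}$, which sends $L_\alpha\mapsto[G_\sigma,L_\alpha]=s(\sigma,\alpha)G_{\alpha+\sigma}$ and $G_\alpha\mapsto0$. The degree-$\sigma$ component is therefore exactly $t_\sigma\ad_{G_\sigma}$. As in Proposition~\ref{prop:even-q0}, $t_\sigma\neq0$ forces a nonzero coefficient in $D(L_{1,0})$ or $D(L_{0,1})$, so only finitely many $\sigma$ occur and $x=\sum t_\sigma G_\sigma\in\cS_{\bar1}$.

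For $\sigma=0$, Lemma~\ref{lem:der-zero} gives $f_0(\alpha)=a\alpha_2+b\alpha_1$ for $\alpha\neq0$, with $f_0(0)=c$ arbitrary. Lemma~\ref{lem:cent-zero} gives $g_0\equiv t$ on $\mathbb Z^2\setminus\{0\}$, with $g_0(0)=e$ arbitrary. The $(G,G)$ equation with $\sigma=0$ reads $t\,s(\alpha,\beta)+t\,s(\alpha,\beta)$ or its difference, depending on the sign. Since $\Th$ is a derivation by Lemma~\ref{lem:outer-are-der}, it must impose no restriction on $t$, and I will confirm this consistency directly. We then get
\[
D=\ad_x+aO_1+bO_2+cO_5+t\Th+(e-t)\Th_0 .
\]
This works because $\Th_0$ corrects the value at $G_{0,0}$, and $O_5$ the value at $L_{0,0}$; note $O_1,O_2$ vanish at $L_{0,0}$.

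The main obstacle is the sign bookkeeping in the $(L,L)$, $(L,G)$ and $(G,G)$ equations for an odd map, to confirm they reduce exactly to (E$_\sigma$) and (Z$_\sigma$) and add nothing further. In particular the $(G,G)$ relation must be automatically satisfied by the constant and zero solutions found. If a sign were to turn $(L,L)$ into a twisted version of (E$_\sigma$), I would compare directly with the known derivations $\ad_{G_\sigma}$, $O_1$, $O_2$, which satisfy it, so the solution space of Lemmas~\ref{lem:der-ne} and \ref{lem:der-zero} still applies.
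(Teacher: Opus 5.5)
Your proposal is correct and follows the paper's proof essentially verbatim. You derive (E$_\sigma$) from the pair $(L_\alpha,L_\beta)$ and (Z$_\sigma$) from $(L_\alpha,G_\beta)$, apply Lemmas \ref{lem:der-ne}, \ref{lem:cent-ne}, \ref{lem:der-zero} and \ref{lem:cent-zero}, and assemble $D=\ad_x+aO_1+bO_2+cO_5+t\Th+(e-t)\Th_0$ exactly as the paper does.

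The $(G_\alpha,G_\beta)$ sign worry is moot. The correct relation is $g_\sigma(\alpha)s(\alpha+\sigma,\beta)=g_\sigma(\beta)s(\alpha,\beta+\sigma)$, which your solutions satisfy automatically. In any case, this inclusion only needs necessary conditions, and the reverse inclusion is Lemma \ref{lem:outer-are-der}.
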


\begin{proof}
Let $D$ be odd, and put $u_\sigma(\alpha)=D(L_\alpha)_{G_{\alpha+\sigma}}$ and $v_\sigma(\alpha)=D(G_\alpha)_{L_{\alpha+\sigma}}$.
The pair $(L_\alpha,L_\beta)$ shows that $u_\sigma$ satisfies (E$_\sigma$). The pair $(L_\alpha,G_\beta)$, together with $[G,G]=0$, shows that $v_\sigma$ satisfies (Z$_\sigma$).
Note that $[G_\sigma,L_\alpha]=s(\sigma,\alpha)G_{\sigma+\alpha}$ and $[G_\sigma,G_\alpha]=0$.

For $\sigma\ne0$: $u_\sigma=t_\sigma s(\sigma,\cdot)$ and $v_\sigma=0$, so the component is $t_\sigma\ad_{G_\sigma}$. Again only finitely many
$t_\sigma$ are nonzero, so $y=\sum t_\sigma G_\sigma\in\cS_{\bar1}$.

For $\sigma=0$: $u_0(\alpha)=a\alpha_2+b\alpha_1$ and $v_0(\alpha)=d$ for $\alpha\ne0$, while $u_0(0)=c$ and $v_0(0)$ are arbitrary.
Hence
\[
D=\ad_y+aO_1+bO_2+cO_5+d\,\Th+\bigl(v_0(0)-d\bigr)\Th_0. \qedhere
\]
\end{proof}

\subsection{\normalsize Directness of the sums}\label{sec:direct}

Suppose that $\ad_x+\sum_i c_iD_i=0$ with $x=\sum_\gamma x_\gamma X_\gamma\in\cS_\theta$ and $D_i$ the outer maps of the
relevant case. Since $[X_\gamma,X_a]$ is a multiple of $X'_{a+\gamma}$, the coefficient of $X'_{a+\gamma}$ in
$\ad_x(X_a)$ equals $x_\gamma$ times one structure constant. We read off the relation at a few well-chosen
coefficients. In each case the contribution of $\ad_x$ is either $0$ (because the structure constant vanishes) or
is determined by a second coefficient.
\begin{itemize}
\item $q\notin\mathbb Z$: the coefficient of $G_{0,1}$ in the image of $G_{0,1}$ gives the coefficient of $D_1$.
\item $q=k\in\mathbb Z^*$: use $G_{0,1}\to G_{0,1}$ (for $D_1$), $L_{1,-k}\to L_{1,-2k}$ (for $D_3$; here
$[L_{0,-k},L_{1,-k}]=0$), and $L_{0,1-k}\to L_{0,1-2k}$ (for $D_4$).
\item $q=0$, even: use $L_{0,1}\to L_{0,1}$ ($D_1$), $L_{1,0}\to L_{1,0}$ ($D_2$), $L_{0,0}\to L_{0,0}$ ($E_L$),
$G_{1,0}\to G_{1,0}$ ($T$, after $D_2$), and $G_{0,0}\to G_{0,0}$ ($E_G$, after $T$).
\item $q=2k\in2\mathbb Z^*$, odd: the coefficient of $L_{0,-2k}$ in the image of $G_{0,-3k}$ is
$2q\,x_{G_{0,k}}+c$, while the coefficient of $L_{0,k}$ in the image of $G_{0,0}$ is $2q\,x_{G_{0,k}}$. Hence $c=0$.
\item $q=0$, odd: use $L_{0,1}\to G_{0,1}$ ($O_1$), $L_{1,0}\to G_{1,0}$ ($O_2$), $L_{0,0}\to G_{0,0}$ ($O_5$),
$G_{1,0}\to L_{1,0}$ ($\Th$; recall $[G,G]=0$), and $G_{0,0}\to L_{0,0}$ ($\Th_0$).
\end{itemize}
This completes the proof of Theorem~\ref{thm:main}. \hfill$\Box$

\section{Quasi-derivations of Block Lie superalgebras}

For $q\neq 0$ we set
\begin{gather*}
\Ann^Q(\cS(q))=\Ann^Q_{\bar0}\oplus\Ann^Q_{\bar1},\\
\Ann^Q_{\theta}=\big\{g\in\mathfrak{gl}(\cS(q)) : g(\cS(q))\subseteq\langle L_{0,-q}\rangle,\ g(\cS(q)_{\theta+\bar1})=0\big\}
\end{gather*}
if $q\in\mathbb Z^*$, and $\Ann^Q(\cS(q))=0$ if $q\notin\mathbb Z$. $\Ann^Q_\theta$ consists of the maps in
$\mathfrak{gl}_\theta(\cS(q))$ with values in the center.
Note that $\Ann^Q_\theta$ is infinite-dimensional: the values $g(X_{m,i})\in\langle L_{0,-q}\rangle$ for $X_{m,i}\in\cS(q)_\theta$ can be chosen arbitrarily.
In particular, $\Ann^Q_{\bar0}$ contains the maps $g_{d,k}\colon L_{m,i}\mapsto\delta_{d,m}\delta_{k,i}L_{0,-q}$,
and $\Ann^Q_{\bar1}$ contains the maps $\tilde g_{d,k}\colon G_{m,i}\mapsto\delta_{d,m}\delta_{k,i}L_{0,-q}$.
For $q\in2\mathbb Z^*$ we have $D_6=\tilde g_{0,-\frac{3q}{2}}\in\Ann^Q_{\bar1}$.

\begin{remark}
For $q\in\mathbb Z^*$ the derivation $D_5\colon L_{0,-2q}\mapsto L_{0,-q}$ of $\cB(q)$ does not extend to a derivation of $\cS(q)$
(Remark \ref{rem:compare}). It is, however, an element of $\Ann^Q_{\bar0}(\cS(q))$ and hence a quasi-derivation.
\end{remark}

Let $D\in\QDer_\theta(\cS(q))$ and let $D'$ be as in \eqref{qder}.
Comparing coefficients in \eqref{qder} for $x=X_a$, $y=X_b$, we obtain the main identity
\begin{equation}\label{mainidentity}
c(X_a,X_b)\,D'(X'_{a+b})_{Y_c}=D(X_a)_{Y_{c-b}}\,c(Y_{c-b},X_b)+(-1)^{\theta|X_a|}c(X_a,Y_{c-a})\,D(X_b)_{Y_{c-a}},
\end{equation}
where $Y_{c-b}$ and $Y_{c-a}$ are the basis vectors (of the appropriate parities) with indices $c-b$ and $c-a$.
Note that if $(D,D')$ satisfies \eqref{qder} and $x\in\cS(q)_\theta$, then so does $(D-\ad_x,D'-\ad_x)$.

\subsection{\normalsize Quasi-derivations of $\cS(q)$ for $q\notin\mathbb Z$}

\begin{theorem}\label{thm:qnotZ}
Let $q\notin\mathbb Z$. Then
\begin{align*}
\QDer_{\bar0}(\cS(q))&=\ad(\cS(q)_{\bar0})\oplus\langle D_1,\ \id\rangle=\Der_{\bar0}(\cS(q))\oplus\langle\id\rangle,\\
\QDer_{\bar1}(\cS(q))&=\ad(\cS(q)_{\bar1})=\Der_{\bar1}(\cS(q)).
\end{align*}
\end{theorem}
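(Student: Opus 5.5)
The approach mirrors the derivation computation: first reduce $D$ to maps that preserve the first index, then solve the resulting functional equations degree by degree. Let $D\in\QDer_\theta(\cS(q))$ with companion $D'$ as in \eqref{qder}, and put $\delta_{\theta}=1$ if $\theta=\bar0$. For $q\notin\mathbb Z$ the center is $0$. Moreover, $L_{0,0}$ acts diagonally with eigenvalue $mq$ on $X_{m,i}$, and these eigenvalues separate the first indices.

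\textbf{Step 1 (reduction).} Apply \eqref{qder} with $x=L_{0,0}$: we get $[D(L_{0,0}),y]+[L_{0,0},D(y)]=D'([L_{0,0},y])$. Comparing coefficients as in Lemma~\ref{lem:reduce}, the off-first-index part of $D(L_{0,0})$ can be removed by subtracting $\ad_x$. This works because $(D-\ad_x,D'-\ad_x)$ again satisfies \eqref{qder}. One point differs from Lemma~\ref{lem:reduce}: $D'$ now appears, so we must also show that $D'$ preserves first indices. Read \eqref{qder} at $(L_{0,0},X_b)$ at a coefficient $Y_c$ with $m_c\ne m_b$. This gives $(m_c-m_b)q\,D(X_b)_{Y_c}=m_bq\,D'(X_b)_{Y_c}$ plus terms from $D(L_{0,0})$, whose off-index components we have already killed. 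Together with the symmetric information from pairs $(L_{0,j},\cdot)$, this should force both $D$ and $D'$ to preserve first indices, at least off a small exceptional set. I expect this to be a mild nuisance rather than a real obstacle. Once it is done, we decompose into $\mathbb Z^2$-degrees $(0,\beta)$.

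\textbf{Step 2 (functional equations).} For even $\theta$, set $f,g$ as for \eqref{eq:evC}--\eqref{eq:evA} and $f',g'$ for $D'$. The $(G,G)$, $(L,L)$ and $(L,G)$ pairs give
\begin{align*}
2q f'(m+n,i+j)&=g(m,i)\,c_1+g(n,j)\,c_2,\\
\bigl(n(i+q)-m(j+q)\bigr)f'(m+n,i+j)&=f(m,i)\bigl(n(i+\beta+q)-m(j+q)\bigr)+\bigl(n(i+q)-m(j+\beta+q)\bigr)f(n,j),
\end{align*}
together with an analogous $(L,G)$ equation. Take $n=0$, or small values as in Lemma~\ref{lem:even-solve}, to solve for $f'$ in terms of $f,g$. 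Substituting back yields equations of the same shape as in Lemma~\ref{lem:even-solve}, now with one extra free parameter. The expected solution is $f=g=Am+Bi+\lambda$ when $\beta=0$, with $\lambda=0$ when $\beta\ne0$. Here the constant $\lambda$ is the identity map, whose companion satisfies $f'=g'=2\lambda+\cdots$. The constraint $C(\beta+q)=0$ and its analogues kill all exceptional solutions, since $\beta+q\neq0$ and $\tfrac{3q}{2}+i\neq0$ for every integer $i$ when $q\notin\mathbb Z$. It follows that the degree-$(0,\beta)$ component is $\frac{A_\beta}{\beta+q}\ad_{L_{0,\beta}}$, plus $B_0D_1+\lambda\,\id$ when $\beta=0$. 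Finiteness of $\sum_\beta$ follows exactly as in Proposition~\ref{prop:even-qne0}.

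\textbf{Step 3 (odd case and directness).} For $\theta=\bar1$, set up the analogues of \eqref{eq:odA}--\eqref{eq:odC} with $D'$ on the left-hand sides. Eliminate $f',g'$ using specializations with $m=0$ or $n=0$. The coefficient $i+\beta+q$ never vanishes, so the argument of Lemma~\ref{lem:odd-solve} should go through with no exceptional coefficient. The result is $D=\ad_y$. Here the identity does not occur, since it is even. Directness follows from Subsection~\ref{sec:direct} together with one observation: $\id$ is not a derivation, because $\id([x,y])\ne2[x,y]$ for any nonzero bracket. Hence $\id\notin\Der_{\bar0}$.

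The main obstacle is Step 2. There the presence of $D'$ doubles the unknowns, and one must check that the elimination of $f'$ really leaves only the one-parameter extension $\lambda\,\id$. In particular, $g(0,0)$ is no longer tied to $f$ by \eqref{eq:evC}. I would first use $(m,i,n,j)=(0,0,1,0)$, $(0,1,1,0)$ and $(1,0,1,0)$ in all three equations to pin down the constants.
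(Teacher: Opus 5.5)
Your Step 1 has a genuine gap, and it is exactly where quasi-derivations differ from derivations.

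After the $L_{0,0}$-normalization, read \eqref{qder} at $(L_{0,0},X_b)$ at a coefficient $Y_c$ with $m_c\neq m_b$. This gives $m_c\,D(X_b)_{Y_c}=m_b\,D'(X_b)_{Y_c}$; your version with $(m_c-m_b)$ on the left is not right. In Lemma~\ref{lem:reduce} this relation killed the off-index part only because $D'=D$. Here it merely links $D$ to $D'$. It forces only $D(X_b)_{Y_c}=0$ when $m_b=0$, and $D'(X_b)_{Y_c}=0$ when $m_c=0$.

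Neither the pairs $(L_{0,j},\cdot)$ nor any other pairs with an even first entry can repair this. Take $\alpha$ with $\alpha_1\neq0$ and the even map $X_{m,i}\mapsto m\,X_{m+\alpha_1,i+\alpha_2}$, with companion $X_{m,i}\mapsto (m+\alpha_1)X_{m+\alpha_1,i+\alpha_2}$. It vanishes on $L_{0,0}$ and satisfies every $(L,L)$ and $(L,G)$ instance of \eqref{qder}. Indeed, after shifting the second index by $q$ for $L$ and by $\frac q2$ for $G$, the check is again the vanishing of an alternating trilinear form on $\mathbb C^2$, as in Proposition~\ref{prop:q0}. Yet this map does not preserve first indices. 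It fails only the $(G,G)$ instance, where the two sides differ by the factor $2q\alpha_1\neq0$. So first-index preservation is not a mild nuisance. It is the main new content of the proof, and it needs the odd--odd bracket $[G,G]=2qL$.

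The paper handles this in Step 3 of the proof of Theorems~\ref{thm:qnotZ} and~\ref{thm:qZ}, working directly with a component of degree $\alpha$, $\alpha_1\neq0$.
\begin{itemize}
\item \emph{Even case.} The normalization gives $mf'=(m+\alpha_1)f$ and $mg'=(m+\alpha_1)g$, so $g(0,\cdot)=0$. The $(G,G)$ equation $f'(m+n,i+j)=g(m,i)+g(n,j)$ at $m=0$ gives $g=f'=\gamma m$. Setting $m=-\alpha_1$ in $mf'=(m+\alpha_1)f$ gives $\alpha_1^2\gamma=0$. Hence $f$ can survive only at $m=-\alpha_1$. The $(L,L)$ equation at $(\alpha_1,0,-\alpha_1,j)$ then gives $\alpha_1(j+\alpha_2+q)f(-\alpha_1,j)=0$, which is zero for $q\notin\mathbb Z$.
\item \emph{Odd case.} Use the $(L,G)$ equation at $m=0$ and at $n=0$; there $[D(L_{m,i}),G_{0,j}]$ is a $[G,G]$-bracket. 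This gives $f=0$ and $g(m,\cdot)=0$ for $m\neq-\alpha_1$. The $(G,G)$ equation removes the rest.
\end{itemize}

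Only after this may you restrict to degrees $(0,\beta)$. There your Steps 2--3 match the paper's Step 4. You should also use the $L_{0,0}$ relation at that point: it gives $f'=f+\varepsilon$ and $g'=g+\varepsilon$ for $m\neq0$, with $\varepsilon=u_\beta(\beta+q)/q$, and this is what makes the constants solvable.

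A minor correction: $\frac{3q}{2}+i$ can vanish for $q\notin\mathbb Z$, e.g.\ $q=\frac23$, $i=-1$. This is harmless, because the exceptional coefficient also requires $i+\beta+q=0$.
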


\subsection{\normalsize Quasi-derivations of $\cS(q)$ for $q\in\mathbb Z^*$}

\begin{theorem}\label{thm:qZ}
Let $q\in\mathbb Z^*$. Then
\begin{align*}
\QDer_{\bar0}(\cS(q))&=\ad(\cS(q)_{\bar0})\oplus\langle D_1,D_3,D_4,\id\rangle\oplus\Ann^Q_{\bar0}(\cS(q)),\\
\QDer_{\bar1}(\cS(q))&=\ad(\cS(q)_{\bar1})\oplus\Ann^Q_{\bar1}(\cS(q)).
\end{align*}
\end{theorem}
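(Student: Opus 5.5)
We prove Theorem~\ref{thm:qZ} in the same way as the derivation case: first make the quasi-derivation preserve the first index, then solve the resulting scalar equations one $\mathbb Z^2$-degree at a time. Each solution will turn out to be a derivation, plus a multiple of $\id$, plus a part that lives only on coefficients landing in the center $\langle L_{0,-q}\rangle$.

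\textbf{Step 1: reduction to degree $(0,\beta)$.} Let $D\in\QDer_\theta(\cS)$ with companion $D'$, and set $q=k\in\mathbb Z^*$.
\begin{itemize}
\item We apply \eqref{mainidentity} to the pair $(L_{0,0},X_b)$. Since $c(L_{0,0},X_b)=m_bq$, this relates $D(X_b)$, $D'(X_b)$ and $[D(L_{0,0}),X_b]$.
\item Next we use the pair $(X_b,L_{0,0})$, or simply super skew-symmetry applied to the quasi-derivation identity. This gives $(m_c-m_b)q\,D(X_b)_{X_c}=\pm[D(L_{0,0}),X_b]_{X_c}$ for $m_c\ne m_b$, which is the same identity as in Lemma~\ref{lem:reduce}.
\item We subtract $\ad_x$, with $x$ built from the components of $D(L_{0,0})$ having $m_\gamma\ne0$, replacing $(D,D')$ by $(D-\ad_x,D'-\ad_x)$.
\end{itemize}
One caveat: the central components $L_{0,-k}$ of $D(L_{0,0})$ are harmless, because they bracket to zero. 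Elements of $\Ann^Q$ may break first-index preservation, but only on coefficients of $L_{0,-k}$. So in this reduction we discard coefficients at the central target $L_{0,-k}$ and handle them separately. After the reduction, $D$ and $D'$ decompose into components of degree $(0,\beta)$, where every coefficient whose target is $L_{0,-k}$ is excluded from the equations.

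\textbf{Step 2: even case.} Write $f,g$ for the coefficients of $D$ on $L$ and on $G$, and $f',g'$ for those of $D'$. The pair $(G,G)$ gives $f'(m+n,i+j)=g(m,i)+g(n,j)$. The pairs $(L,L)$ and $(L,G)$ give analogues of \eqref{eq:evA} with $f'$ on the left.
\begin{itemize}
\item As in Lemma~\ref{lem:even-solve}, $g$ is affine: $g=Am+Bi+C$, and $f'=Am+Bi+2C$.
\item Comparing $(L,G)$ and $(L,L)$ should force $f$ to be affine as well, with $f-f'$ and $g-g'$ proportional to the identity part.
\item We then expect the $\beta=0$ component to be a combination $\lambda\,\id+\mu\ad_{L_{0,0}}+\nu D_1$ (with $D'=2\lambda\id+\dots$).
\item For $\beta\neq0,-k$ we expect a multiple of $\ad_{L_{0,\beta}}$, and for $\beta=-k$ a combination of $D_3$ and $D_4$, as in Proposition~\ref{prop:even-qne0}.
\end{itemize}
The new feature compared with derivations is the constant $C$ at $\beta=0$, now allowed with $C(\beta+q)$ no longer forced to vanish. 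We expect this to be absorbed exactly by $\id$.

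\textbf{Step 3: odd case and the exceptional coefficients.} For odd $\theta$ we repeat Lemma~\ref{lem:odd-solve}, with $D'$ appearing on the left-hand sides of \eqref{eq:odA}--\eqref{eq:odC}.
\begin{itemize}
\item The target is that $f=am$ and $g\equiv$ const. This would make each component equal to $\ad_{G_{0,\beta}}$, except at finitely many coefficients.
\item Those exceptional coefficients must be shown to map into $\langle L_{0,-k}\rangle$; $D_6$ is one example, and it lies in $\Ann^Q_{\bar1}$.
\end{itemize}

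\textbf{Expected obstacle and final steps.} The main obstacle is the bookkeeping of the exceptional coefficients. Concretely, we must show that every coefficient not determined by the functional equations is a coefficient with target $L_{0,-k}$. To do this we will enumerate the pairs $(X_a,X_b)$ with $c(X_a,X_b)=0$ or with target index $(0,-k)$. We then check that any remaining free value must have target $L_{0,-k}$, so the difference from $\ad+\langle D_1,D_3,D_4,\id\rangle$ lies in $\Ann^Q_\theta$.

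Finiteness of the inner part follows as before, from the finitely many $\beta$ that occur in $D(G_{1,0})$ and $D(G_{0,0})$. For directness, we evaluate at well-chosen basis vectors, as in Subsection~\ref{sec:direct}:
\begin{itemize}
\item maps in $\Ann^Q$ vanish on coefficients with non-central targets;
\item $\id$ is detected by $G_{0,1}\mapsto G_{0,1}$ after removing $D_1$, together with $L_{0,1}$.
\end{itemize}
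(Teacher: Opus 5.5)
Your Step~1 does not carry over to quasi-derivations, and Steps~2 and~3 depend on it. For the pair $(L_{0,0},X_b)$, the coefficient of $X_c$ in \eqref{qder} is
\[
m_b q\,D'(X_b)_{X_c}-m_c q\,D(X_b)_{X_c}=[D(L_{0,0}),X_b]_{X_c}.
\]
This involves the independent map $D'$, and it reduces to the identity of Lemma~\ref{lem:reduce} only when $D'=D$. Reversing the pair does not help. The left-hand side $[D(x),y]+(-1)^{\theta|x|}[x,D(y)]$ of \eqref{qder} is super-skew-symmetric in $(x,y)$, exactly like $D'([x,y])$, so the pair $(X_b,L_{0,0})$ gives the same scalar equation. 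After normalizing $D(L_{0,0})$, the components of degree $\alpha=(\alpha_1,\alpha_2)$ only satisfy $m\,f'(m,i)=(m+\alpha_1)f(m,i)$. For $\alpha_1\neq0$ this gives $f(0,\cdot)=g(0,\cdot)=0$, but it does not make these components vanish. So the reduction to degree $(0,\beta)$ is unproved, even after setting aside the coefficients with target $L_{0,-k}$.

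The missing piece is a separate analysis of the components with $\alpha_1\neq0$; this is where $[G,G]=2qL$ with $q\neq0$ enters.
\begin{itemize}
\item In the even case, the $GG$-equation $f'(m+n,i+j)=g(m,i)+g(n,j)$ together with $g(0,\cdot)=0$ gives $g(n,j)=f'(n,j)=\gamma n$. The relation $mf'=(m+\alpha_1)f$ at $m=-\alpha_1$ gives $\alpha_1^2\gamma=0$. Hence $g=f'=0$ and $f(m,\cdot)=0$ for $m\neq-\alpha_1$, and the $LL$-equation at $(\alpha_1,0,-\alpha_1,j)$ leaves only coefficients with target $L_{0,-q}$.
\item In the odd case, the $LG$-equation with $m=0$ and then with $n=0$ kills $f$ and $g(m,\cdot)$ for $m\neq-\alpha_1$. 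The $GG$-equation at $(-\alpha_1,j,2\alpha_1,0)$ again leaves only the central target.
\end{itemize}
In the degree-$(0,\beta)$ analysis you must also track the $L_{0,\beta}$-component $u_\beta$ of $D(L_{0,0})$, which normalization does not remove. It gives $f'=f+\varepsilon$ and $g'=g+\varepsilon$ for $m\neq0$, with $\varepsilon=u_\beta(\beta+q)/q$. One then needs $\beta\varepsilon=0$, from the $LL$-equations at $(1,0,1,1)$ and $(1,0,2,0)$ and the $LG$-equation at $(1,0,1,0)$. This shows that the $\id$-part arises only at $\beta=0$, and that $C(\beta+q)=0$ and $\beta B=-2C$ still hold for $\beta\neq0$. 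This computation is what should replace your ``should force''.

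Finally, your directness check does not separate $\id$ from $D_1$. The coefficient $G_{0,1}\to G_{0,1}$ gives $c_{D_1}+l$, and $L_{0,1}\to L_{0,1}$ gives the same combination (plus a $Z$-term when $q=-1$). Use $G_{0,0}\to G_{0,0}$, which sees $l$ alone. Alternatively, apply the derivation identity to $[G_{0,0},G_{m,i}]=2qL_{m,i}$ to get $Z(L_{m,i})=l\,L_{m,i}$, and evaluate at $L_{1,0}$.
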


\subsection{\normalsize Proof of Theorems \ref{thm:qnotZ} and \ref{thm:qZ}}

Throughout, $q\neq0$. All maps on the right-hand sides are quasi-derivations: derivations with $D'=D$ (Theorem \ref{thm:main}),
the identity with $D'=2\id$, and elements of $\Ann^Q$ with $D'=0$. Let $(D,D')$ satisfy \eqref{qder}, $D,D'\in\mathfrak{gl}_\theta(\cS(q))$.

\medskip
\noindent\textbf{Step 1: normalization.}
Since $[L_{0,0},X_{m,i}]=mq\,X_{m,i}$, the element $x=-\sum_{m\neq0}(mq)^{-1}D(L_{0,0})_m\in\cS(q)_\theta$,
where $D(L_{0,0})_m$ is the part of $D(L_{0,0})$ with first index $m$, satisfies
$(D-\ad_x)(L_{0,0})\in\langle X_{0,j} : j\in\mathbb Z\rangle$. Replacing $(D,D')$ by $(D-\ad_x,D'-\ad_x)$, we may assume that
$D(L_{0,0})$ involves only basis vectors with first index $0$.

\medskip
\noindent\textbf{Step 2: graded components.}
For $\alpha=(\alpha_1,\alpha_2)\in\mathbb Z^2$ let $f(m,i)$ and $g(m,i)$ be the coefficients of $X_{m+\alpha_1,i+\alpha_2}$ (of the appropriate parity)
in $D(L_{m,i})$ and $D(G_{m,i})$, and let $f',g'$ be the corresponding coefficients for $D'$.
Identity \eqref{mainidentity} splits into a system of functional equations for $f,g,f',g'$ (for the pairs $LL$, $LG$, $GL$, $GG$).
Taking $X_a=L_{0,0}$ gives
\begin{equation}\label{R}
nq\,f'(n,j)=u_\alpha\,c_\alpha(n,j)+(n+\alpha_1)q\,f(n,j),
\end{equation}
and similarly for $g,g'$, where $u_\alpha$ is the coefficient of $X_\alpha$ in $D(L_{0,0})$ and $c_\alpha(n,j)$ is the corresponding structure constant.

\medskip
\noindent\textbf{Step 3: components with $\alpha_1\neq0$.}
By Step 1, $u_\alpha=0$, so \eqref{R} gives 
\begin{center}
$mf'(m,i)=(m+\alpha_1)f(m,i)$ and $mg'(m,i)=(m+\alpha_1)g(m,i)$.
\end{center}In particular $f(0,\cdot)=g(0,\cdot)=0$.

\emph{Even case.} The $GG$-equation gives $f'(m+n,i+j)=g(m,i)+g(n,j)$. Taking $m=0$ and using $g(0,\cdot)=0$ we get $f'(n,i+j)=g(n,j)$ for all $i$,
so $g(n,j)=f'(n,j)$ does not depend on $j$ and is additive in $n$; hence $g(n,j)=f'(n,j)=\gamma n$
for a constant $\gamma$. At $m=-\alpha_1$ the relation $mf'=(m+\alpha_1)f$ gives $\alpha_1^2\gamma=0$, so $\gamma=0$.
Then $g=0$, $f'=0$ and $f(m,\cdot)=0$ for $m\neq-\alpha_1$. The $LL$-equation with $(m,i,n,j)=(\alpha_1,0,-\alpha_1,j)$ gives
$\alpha_1(j+\alpha_2+q)f(-\alpha_1,j)=0$. Hence the only coefficient that may survive is the coefficient of $L_{0,-q}$ in $D(L_{-\alpha_1,j})$ with $j+\alpha_2=-q$,
which can occur only if $q\in\mathbb Z$.

\emph{Odd case.} The $LG$-equation with $m=0$ shows that $g(n,\cdot)$ is constant for $n\neq-\alpha_1$. With $n=0$ it gives
$2qf(m,i)=-(j+\frac q2)(m+\alpha_1)g(m,i+j)$ for all $j$. Hence $f=0$ and $g(m,\cdot)=0$ for $m\neq-\alpha_1$.
The $GG$-equation at $(m,i,n,j)=(-\alpha_1,j,2\alpha_1,0)$ leaves only the coefficient of $L_{0,-q}$ in $D(G_{-\alpha_1,-q-\alpha_2})$.

\medskip
\noindent\textbf{Step 4: components with $\alpha=(0,\beta)$.}

\emph{Even case.} Now \eqref{R} gives $f'=f+\varepsilon$ and $g'=g+\varepsilon$ for $m\neq0$, where $\varepsilon=u_\beta(\beta+q)/q$.
The $GG$-equation gives, exactly as in Lemma \ref{lem:even-solve}, $g(m,i)=Am+Bi+C$ and $f'(m,i)=Am+Bi+2C$, hence $f(m,i)=Am+Bi+2C-\varepsilon$ for $m\neq0$.
The $LL$-equations at $(1,0,1,1)$ and $(1,0,2,0)$ and the $LG$-equation at $(1,0,1,0)$ give
$$\beta B=2\varepsilon-2C,\qquad q(2\varepsilon-2C)=\beta(2C-\varepsilon),\qquad q(\varepsilon-C)=\beta(C-\varepsilon).$$
Hence $\beta\varepsilon=0$. If $\beta=0$ then $\varepsilon=C$; if $\beta\neq0$ then $\varepsilon=0$, $C(\beta+q)=0$ and $\beta B=-2C$.
The $LL$-equation with $m=0$ extends the formula for $f$ to $m=0$, except for the coefficient $L_{0,-q-\beta}\mapsto L_{0,-q}$. Consequently, up to coefficients with target $L_{0,-q}$, the component is
\begin{itemize}
\item $\frac Aq\ad_{L_{0,0}}+B\,D_1+C\,\id$ if $\beta=0$;
\item $\frac A{\beta+q}\ad_{L_{0,\beta}}$ if $\beta\neq0,-q$;
\item $A\,D_3+B\,D_4$ if $\beta=-q$ (possible only if $q\in\mathbb Z^*$).
\end{itemize}

\emph{Odd case.} The $LG$-equation with $m=n=0$ gives $f(0,\cdot)=0$, so $u_\beta=0$, and $D'=D$ on all basis vectors with $m\neq 0$.
The remaining equations give, as in Lemma \ref{lem:odd-solve}, that $f(n,\cdot)$ and $g(n,\cdot)$ are constant for $n\neq0$, $g(n,0)=\Gamma$ for all $n\neq0$,
$2qf(m,i)=\Gamma m(\beta+\frac q2)$, and $g(0,j)=\Gamma$ unless $j+\beta+q=0$. Hence, up to the coefficient $G_{0,-q-\beta}\mapsto L_{0,-q}$, the component is $\frac{\Gamma}{2q}\ad_{G_{0,\beta}}$.

\medskip
\noindent\textbf{Step 5: assembly.}
Put $y=\sum_\beta\frac{A_\beta}{\beta+q}L_{0,\beta}$ (sum over $\beta\neq-q$) in the even case and $y=\sum_\beta\frac{\Gamma_\beta}{2q}G_{0,\beta}$ in the odd case.
These sums are finite, since $A_\beta$ and $\Gamma_\beta$ are coefficients of the vectors $D(G_{1,0})$ and $D(G_{0,0})$.
By Steps 3--4, all coefficients of
$$D-\ad_y-\big(B_0 D_1+C_0\,\id+A_{-q} D_3+B_{-q} D_4\big)\quad\text{(even case)},\qquad D-\ad_y\quad\text{(odd case)}$$
vanish, except those with target $L_{0,-q}$ and source in $\cS(q)_\theta$; here the constants are those of the components $\beta=0$ and $\beta=-q$,
and the terms $A_{-q} D_3+B_{-q} D_4$ occur only if $q\in\mathbb Z^*$.
If $q\notin\mathbb Z$, there are no such coefficients; if $q\in\mathbb Z^*$, the difference belongs to $\Ann^Q_\theta(\cS(q))$.

\medskip
\noindent\textbf{Step 6: the sums are direct.}
By Theorem \ref{thm:main} it is enough to show the following.
Suppose $\Delta+l\,\id+Z=0$ with $\Delta\in\Der_{\bar0}(\cS(q))$, $l\in\mathbb C$, $Z\in\Ann^Q_{\bar0}$. Applying the derivation identity for $\Delta$ to
$[G_{0,0},G_{m,i}]=2qL_{m,i}$ and using $Z(G_{n,j})=0$, we obtain $Z(L_{m,i})=l\,L_{m,i}$; at $L_{1,0}$ this gives $l=0$, hence $Z=0$ and $\Delta=0$.
If $\ad_x+Z=0$ with $x\in\cS(q)_{\bar1}$ and $Z\in\Ann^Q_{\bar1}$, then comparing the coefficients of $L_{1,i}$ in the images of $G_{1-m,0}$ gives
$2q\,x_{G_{m,i}}=0$, so $x=0$ and $Z=0$. \hfill$\Box$

\subsection{\normalsize Quasi-derivations of $\cS(0)$}\label{sec:qder-q0}

For $q=0$ we use the form $s$ from \eqref{eq:B0}: $[L_a,L_b]=s(a,b)L_{a+b}$, $[L_a,G_b]=s(a,b)G_{a+b}$ and $[G_a,G_b]=0$.
The center of $\cS(0)$ is $Z(\cS(0))=\langle L_{0,0},G_{0,0}\rangle$, and we set
\[
\Ann^Q(\cS(0))=\Ann^Q_{\bar0}\oplus\Ann^Q_{\bar1},\qquad
\Ann^Q_\theta=\big\{g\in\mathfrak{gl}_\theta(\cS(0)) : g(\cS(0))\subseteq\langle L_{0,0},G_{0,0}\rangle\big\}.
\]
Thus an element of $\Ann^Q_{\bar0}$ sends every $L_{m,i}$ to a multiple of $L_{0,0}$ and every $G_{m,i}$ to a multiple of $G_{0,0}$,
while an element of $\Ann^Q_{\bar1}$ sends every $L_{m,i}$ to a multiple of $G_{0,0}$ and every $G_{m,i}$ to a multiple of $L_{0,0}$;
these multiples can be chosen arbitrarily. In particular $E_L,E_G\in\Ann^Q_{\bar0}$ and $O_5,\Th_0\in\Ann^Q_{\bar1}$.

\begin{proposition}\label{prop:q0}
For every $\beta=(\beta_1,\beta_2)\in\mathbb Z^2$ and $\kappa,\lambda\in\mathbb C$ the following maps are quasi-derivations of $\cS(0)$:
\begin{enumerate}
\item[{\rm 1.}] the even map $V_{\beta,\kappa,\lambda}\colon X_{m,i}\mapsto(\kappa m+\lambda i)X_{m+\beta_1,i+\beta_2}$, with
$V'_{\beta,\kappa,\lambda}(X_{m,i})=\big(\kappa(m+\beta_1)+\lambda(i+\beta_2)\big)X_{m+\beta_1,i+\beta_2}$;
\item[{\rm 2.}] the odd map $V^{o}_{\beta,\kappa,\lambda}\colon L_{m,i}\mapsto(\kappa m+\lambda i)G_{m+\beta_1,i+\beta_2}$, $G_{m,i}\mapsto0$;
\item[{\rm 3.}] $\id$, $T$, $\Th$, the odd map $\Th'\colon L_{m,i}\mapsto G_{m,i}$, $G_{m,i}\mapsto 0$ (with $D'=2\Th'$),
and every element of $\Ann^Q(\cS(0))$.
\end{enumerate}
Moreover, $V_{\beta,\kappa,\lambda}$ is a derivation if and only if $\kappa\beta_1+\lambda\beta_2=0$.
\end{proposition}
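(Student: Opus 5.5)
Everything reduces to checking \eqref{qder} on pairs of basis vectors, using the bilinear alternating form $s$ from \eqref{eq:B0}: $[L_a,L_b]=s(a,b)L_{a+b}$, $[L_a,G_b]=s(a,b)G_{a+b}$, $[G_a,G_b]=0$. The basic identity is bilinearity,
\[
s(a+\beta,b)+s(a,b+\beta)=s(a,b)+s(\beta,b)+s(a,\beta)=s(a,b)+s(\beta,a+b),
\]
where $\beta=(\beta_1,\beta_2)$ is the shift. Write $w(a)=\kappa a_1+\lambda a_2$; it is linear, so $w(a)+w(b)=w(a+b)$.

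For item 1, take a pair $(X_a,Y_b)$ with at least one factor $L$; the pair $(G,G)$ is trivial on both sides. The right-hand side of \eqref{qder} is
\[
w(a)s(a+\beta,b)+w(b)s(a,b+\beta)
\]
times $Z_{a+b+\beta}$. I will expand it by bilinearity and use $w(a)+w(b)=w(a+b)$. This is where the real work lies, because the cross terms $w(a)s(\beta,b)+w(b)s(a,\beta)$ do not obviously combine into a function of $a+b$. They are bilinear in $(a,b)$ but not symmetric-in-sum. So I will not insist that $V'$ has the same shape as $V$: I will compute the whole expression as a polynomial in $a$ and $b$ and read off $D'(Z_{a+b})$. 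I first check whether
\[
w(a)s(a+\beta,b)+w(b)s(a,b+\beta)=\bigl(w(a+b)+w(\beta)\bigr)s(a,b)
\]
holds, which is the stated $V'$ with value $w(a+b+\beta)=\kappa(m+\beta_1)+\lambda(i+\beta_2)$ at $X_{a+b}$. This amounts to
\[
w(a)s(\beta,b)+w(b)s(a,\beta)=w(\beta)s(a,b).
\]
That is a two-dimensional alternating identity: for linear forms on $\mathbb Z^2$ one has $w(a)s(\beta,b)+w(b)s(a,\beta)+w(\beta)s(b,a)=0$, since a cyclic alternating trilinear-type expression in three vectors of a rank-2 lattice vanishes. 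I will verify this directly with coordinates $a=(m,i)$, $b=(n,j)$, $\beta$; this identity is the key step.

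With the identity in hand, $V'$ works uniformly for $LL$ and $LG$ because the $G$-brackets use the same $s$. The sign $(-1)^{\theta|x|}$ is trivial since $\theta=\bar0$. The derivation criterion follows by comparison. $V$ is a derivation iff $V'=V$ on $[\cS,\cS]$, i.e.\ iff $w(\beta)s(a,b)=0$ for all $a,b$. Since some $s(a,b)\ne0$ (every basis vector $X_c$ with $c\neq0$ lies in $[\cS,\cS]$), this holds iff $w(\beta)=\kappa\beta_1+\lambda\beta_2=0$.

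For item 2, $D=V^o$ is odd and kills $G$. On the pair $(L_a,L_b)$ the same computation produces $w(a+b+\beta)s(a,b)G_{a+b+\beta}$, so I set $D'(L_c)=w(c+\beta)G_{c+\beta}$. On $(L_a,G_b)$ the term $[D(L_a),G_b]$ is $[G,G]=0$ and the term $D(G_b)=0$, so the right-hand side vanishes; I set $D'(G_c)=0$. On $(G_a,G_b)$ everything is zero. I will take care with the sign on $(G_a,L_b)$ by super skew-symmetry, but there too the right-hand side is a $[G,G]$ term plus zero.

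For item 3, $\id$ (with $D'=2\id$) and $\Ann^Q$ (with $D'=0$) were already noted in the preliminaries. For $T$ I take $D'=T$: on $(L,L)$ both sides are $0$, on $(L,G)$ both sides are $[L,G]$, and on $(G,G)$ both sides are $0$. $\Th$ is a derivation by Theorem~\ref{thm:main}. For $\Th'$ with $D'=2\Th'$: on $(L_a,L_b)$ the right-hand side is $s(a,b)G_{a+b}+s(a,b)G_{a+b}=2s(a,b)G_{a+b}$, which equals $2\Th'$ of the bracket. On pairs involving a $G$, each term is a $[G,G]$ bracket or $D(G)=0$, so the right-hand side is $0$; I set $2\Th'(G)=0$, and that is consistent.
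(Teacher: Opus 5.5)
Your proposal is correct and follows the paper's proof. Item 1 reduces, via bilinearity of $s$ and linearity of $w$, to $w(a)s(\beta,b)+w(b)s(a,\beta)=w(\beta)s(a,b)$, which holds because an alternating trilinear form on a rank-$2$ lattice vanishes; the remaining maps are direct checks; and the derivation criterion comes from comparing $V$ and $V'$ on $[\cS,\cS]$ (the paper uses the pair $L_{1,0},L_{0,1}$). One slip: your opening display is false, since in fact $s(a+\beta,b)+s(a,b+\beta)=2s(a,b)+s(\beta,b)+s(a,\beta)$ and $s(\beta,b)+s(a,\beta)=s(\beta,b-a)$; but you never use it, and your actual reduction, which expands each term separately, is correct.
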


\begin{proof}
For $\ell(m,i)=\kappa m+\lambda i$, statement (1) reduces to the identity
\begin{center}
$s(a,b)\ell(a+b+\beta)=\ell(a)s(a+\beta,b)+\ell(b)s(a,b+\beta)$,
\end{center} which is equivalent to
$\ell(\beta)s(a,b)=\ell(a)s(\beta,b)+\ell(b)s(a,\beta)$; the latter holds because an alternating trilinear form on a $2$-dimensional space vanishes.
The other statements are direct verifications. For the last assertion, compare coefficients in the derivation identity for $L_{1,0}$ and $L_{0,1}$.
\end{proof}

Since $\ad_{L_\beta}=V_{\beta,\beta_2,-\beta_1}$ on $\cS(0)$, every $\beta\neq0$ gives quasi-derivations $V_{\beta,\kappa,\lambda}$ which are not derivations.
Hence $\QDer(\cS(0))/\Der(\cS(0))$ is infinite-dimensional.
We now show that the maps of Proposition \ref{prop:q0} exhaust all quasi-derivations of $\cS(0)$.
Since \begin{center}
$V_{\beta,\kappa,\lambda}+V_{\beta,\kappa',\lambda'}=V_{\beta,\kappa+\kappa',\lambda+\lambda'}$
\end{center} and similarly for $V^o$, the spaces
\[
\mathcal V_{\bar0}=\big\langle V_{\beta,\kappa,\lambda} : \beta\in\mathbb Z^2,\ \kappa,\lambda\in\mathbb C\big\rangle,\qquad
\mathcal V_{\bar1}=\big\langle V^o_{\beta,\kappa,\lambda} : \beta\in\mathbb Z^2,\ \kappa,\lambda\in\mathbb C\big\rangle
\]
consist of the finite sums $\sum_{\beta\in S}V_{\beta,\kappa_\beta,\lambda_\beta}$ (respectively $\sum_{\beta\in S}V^o_{\beta,\kappa_\beta,\lambda_\beta}$)
over finite sets $S\subset\mathbb Z^2$.

\begin{theorem}\label{thm:q0}
\begin{align*}
\QDer_{\bar0}(\cS(0))&=\mathcal V_{\bar0}\oplus\langle\id,\ T\rangle\oplus\Ann^Q_{\bar0}(\cS(0)),\\
\QDer_{\bar1}(\cS(0))&=\mathcal V_{\bar1}\oplus\langle\Th,\ \Th'\rangle\oplus\Ann^Q_{\bar1}(\cS(0)).
\end{align*}
In other words, a linear map $D\in\mathfrak{gl}_{\bar0}(\cS(0))$ is a quasi-derivation if and only if
\begin{center}$D=\sum_{\beta\in S}V_{\beta,\kappa_\beta,\lambda_\beta}+a\,\id+t\,T+Z$
\end{center} for a finite set $S\subset\mathbb Z^2$, constants $\kappa_\beta,\lambda_\beta,a,t\in\mathbb C$
and $Z\in\Ann^Q_{\bar0}(\cS(0))$; and $D\in\mathfrak{gl}_{\bar1}(\cS(0))$ is a quasi-derivation if and only if
$D=\sum_{\beta\in S}V^o_{\beta,\kappa_\beta,\lambda_\beta}+b\,\Th+c\,\Th'+Z$ with $Z\in\Ann^Q_{\bar1}(\cS(0))$.
These decompositions are unique.
\end{theorem}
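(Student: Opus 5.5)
Proposition~\ref{prop:q0} already shows that every map on the right-hand sides is a quasi-derivation, so two things remain: every quasi-derivation decomposes as stated, and the decomposition is unique. Let $(D,D')$ satisfy \eqref{qder} with $D,D'\in\mathfrak{gl}_\theta(\cS(0))$. Since $\cS(0)$ is $\mathbb Z^2$-graded and \eqref{mainidentity} only couples coefficients of the same shift $\sigma\in\mathbb Z^2$, we split $D$ and $D'$ into graded components. In the even case put $f_\sigma(\alpha)=D(L_\alpha)_{L_{\alpha+\sigma}}$ and $g_\sigma(\alpha)=D(G_\alpha)_{G_{\alpha+\sigma}}$, and define $f'_\sigma,g'_\sigma$ analogously for $D'$. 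In the odd case put $u_\sigma(\alpha)=D(L_\alpha)_{G_{\alpha+\sigma}}$ and $v_\sigma(\alpha)=D(G_\alpha)_{L_{\alpha+\sigma}}$, with primed versions for $D'$. Coefficients landing in $L_{0,0}$ or $G_{0,0}$ (target index $0$, i.e.\ source $\alpha=-\sigma$) never occur on the right of \eqref{mainidentity} with a nonzero constant, because $s(a,-a)=0$. These coefficients are exactly the freedom of $\Ann^Q$, so we discard them and solve only for $\alpha\neq-\sigma$.

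\emph{Even case.} The pairs give three equations. From $LL$: $s(\alpha,\beta)f'_\sigma(\alpha+\beta)=f_\sigma(\alpha)s(\alpha+\sigma,\beta)+s(\alpha,\beta+\sigma)f_\sigma(\beta)$. From $LG$: the same equation with $g'_\sigma$ on the left and $f_\sigma(\alpha),g_\sigma(\beta)$ on the right. The $GG$ pair gives nothing. Subtracting the $LG$ equation from $LL$ makes $h=g_\sigma-f_\sigma$ satisfy $s(\alpha,\beta)(f'-g')(\alpha+\beta)=s(\alpha,\beta+\sigma)h(\beta)$. Taking $\alpha=-\beta$ shows $s(\sigma,\beta)h(\beta)=0$, after which an argument as in Lemmas~\ref{lem:cent-ne} and~\ref{lem:cent-zero} should show that $h$ is constant for $\sigma=0$ and that $h$ vanishes away from $\alpha=-\sigma$ for $\sigma\neq0$. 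This constant produces $T$. The core task is the ``quasi'' version of (E$_\sigma$) for the pair $(f,f')$. The ansatz $f(\alpha)=\ell(\alpha)+c$, $f'(\alpha)=\ell(\alpha+\sigma)+2c$ with $\ell$ linear solves it; this is the $V$ part plus a multiple of $\id$, and $c\neq0$ forces $\sigma=0$ by checking against $s$. To prove this is the only solution, evaluate at $\beta=0$: this gives $s(\alpha,\sigma)f(0)=0$ and $f'(\alpha)s(\alpha,0)=\dots$, and the substitutions used in Lemmas~\ref{lem:der-ne} and~\ref{lem:der-zero} then show $f'(\alpha+\beta)=f(\alpha)+f(\beta)+(\text{correction linear in }\sigma)$ whenever $s(\alpha,\beta)\neq0$. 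The key intermediate claim is that $f-\ell$ is additive on pairs with $s(\alpha,\beta)\ne0$, hence affine off the exceptional point. Then $f'$ is determined wherever $s(\alpha,\beta)\ne0$ for some decomposition, which covers all targets except $0$.

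\emph{Odd case.} The $LL$ pair gives the same quasi-(E$_\sigma$) system for $(u,u')$. The $LG$ pair gives $s(\alpha,\beta)v'(\alpha+\beta)=s(\alpha,\beta+\sigma)v(\beta)$ together with a term in $u(\alpha)$ paired with $[G,G]=0$, which drops out. The $GG$ pair gives $0=v(\alpha)s(\alpha+\sigma,\beta)-s(\alpha,\beta+\sigma)v(\beta)$. Set $\sigma=0$ in the first relation: $v'$ is constant off $0$, and the $GG$ relation forces $v$ to be constant for $\sigma=0$, producing $\Th$. For $\sigma\ne0$, the same relation with $\alpha=-\beta$ and the method of Lemma~\ref{lem:cent-ne} kill $v$ away from the central target. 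The $u$-part solves like the even $f$, giving $V^o$ and $\Th'$ (the latter playing the role of the constant $c$).

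\emph{Finiteness and uniqueness.} Since $D(L_{1,0})$ and $D(L_{0,1})$ are finite sums, the linear parts $\ell_\sigma$ vanish for all but finitely many $\sigma$, so the result is a finite sum over some $S$. Uniqueness: if the sum vanishes, read off coefficients on non-central targets. The values at $L_{1,0},L_{0,1}$ recover $\kappa_\beta,\lambda_\beta$; then the value at $L_{1,1}$ recovers $a$ (resp.\ $c$), and the $G$-values recover $t$ (resp.\ $b$). What remains lies in $\Ann^Q$.

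The main obstacle is the rigidity step for the quasi-(E$_\sigma$) system with $\sigma\neq0$: showing that the affine ansatz is the only solution when $D'$ is not tied to $D$. The exceptional line $s(\sigma,\alpha)=0$ needs separate chaining, as in steps (4)--(6) of Lemma~\ref{lem:der-ne}.
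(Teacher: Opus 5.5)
\textbf{Verdict: genuine gap.} Your architecture matches the paper's: $\mathbb Z^2$-graded components; free coefficients with central target (the $\Ann^Q$ part); the system (Q$_\sigma$) from the $LL$ pair; a (C$_\sigma$)-type system for $g_\sigma-f_\sigma$ from $LL$ minus $LG$; finiteness via $D(L_{1,0}),D(L_{0,1})$; and coefficient reading for uniqueness. Your extra use of the odd $GG$ pair, $v(\alpha)s(\alpha+\sigma,\beta)=s(\alpha,\beta+\sigma)v(\beta)$, is correct. It is even convenient: $\beta=\alpha$ gives $2s(\sigma,\alpha)v(\alpha)=0$ directly, so $v$ vanishes wherever $s(\sigma,\alpha)\neq0$.

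The gap is the heart of the theorem. You must show that every solution $(f,F)$ of (Q$_\sigma$) has $f=\ell$ off $-\sigma$ for $\sigma\neq0$, and $f=\ell+c$ off $0$ for $\sigma=0$. You leave this unproved, and the route you indicate does not work. The substitutions in Lemmas \ref{lem:der-ne} and \ref{lem:der-zero} all rely on the same function appearing on both sides. For example, step (2) of Lemma \ref{lem:der-ne}, at the pair $(x,\sigma)$, gives $G(x+\sigma)=G(x)$. In the quasi setting the same pair gives only $F(x+\sigma)=f(x)+2f(\sigma)$, which still involves the unknown $F$. Your other steps also do not advance:
\begin{itemize}
\item The $\beta=0$ evaluation yields only $f(0)=0$, and nothing at all when $\sigma=0$.
\item The formula $F(\alpha+\beta)=f(\alpha)+f(\beta)+\dots$ is just (Q$_\sigma$) rewritten.
\item The ``key intermediate claim'' that $f-\ell$ is additive cannot be reached before $F$ is eliminated. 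As phrased it is also inaccurate. For $\sigma=0$ one has $f-\ell\equiv c$, so it is $f-c$ that becomes additive, and only after $F-f\equiv c$ is known. For $\sigma\neq0$, additivity fails for pairs with $\alpha+\beta=-\sigma$, because $f(-\sigma)$ is arbitrary.
\end{itemize}

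What is missing is a mechanism that decouples $F$ from $f$; without it the ``only if'' direction is not established.
\begin{itemize}
\item \emph{Case $\sigma=0$.} The paper proves $F-f$ is constant on $\mathbb Z^2\setminus\{0\}$. For $x,y\neq0$, pick $\beta$ with $s(x,\beta)\notin\{0,s(x,y)\}$ and $s(y,\beta)\notin\{0,s(y,x)\}$. Compare (Q$_0$) on the four pairs $(x,y-\beta)$, $(x-\beta,y)$, $(x-\beta,\beta)$, $(\beta,y-\beta)$. Then $f-c$ satisfies (E$_0$), and Lemma \ref{lem:der-zero} applies.
\item \emph{Case $\sigma\neq0$.} The decoupling is the pair $(\gamma+\sigma,-\sigma)$. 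It gives $F(\gamma)=f(\gamma+\sigma)$ whenever $s(\sigma,\gamma)\neq0$; the free value $f(-\sigma)$ drops out. Next subtract the linear $\ell$ agreeing with $f$ at $\sigma$ and $\rho$. The pairs $(\alpha,\sigma)$ and $(\alpha,-2\sigma)$ then give $\sigma$-periodicity on $N=\{\alpha: s(\sigma,\alpha)\neq0\}$. Comparing $(\alpha,\rho')$ with $(\alpha,\rho'+\sigma)$ kills $f-\ell$ on $N$. Finally, the line $s(\sigma,\cdot)=0$ is handled via the pair $(\rho,\gamma)$.
\item \emph{Your $h=g_\sigma-f_\sigma$.} A smaller instance of the same issue occurs here. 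The last step of Lemma \ref{lem:cent-ne} puts the unknown on the left-hand side, which is not available for (C$_\sigma$). You must first show the left-hand function vanishes off $0$, and then evaluate with the point on the line as the second argument, as in Lemma \ref{lem:qcent}.
\end{itemize}
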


For the proof we need two functional equations. For functions $f,F\colon\mathbb Z^2\to\mathbb C$, $k,K\colon\mathbb Z^2\to\mathbb C$ and $\sigma\in\mathbb Z^2$ consider
\begin{align}
s(\alpha,\beta)F(\alpha+\beta)&=f(\alpha)\,s(\alpha+\sigma,\beta)+s(\alpha,\beta+\sigma)f(\beta),\tag{Q$_\sigma$}\label{eq:Q}\\
s(\alpha,\beta)K(\alpha+\beta)&=s(\alpha,\beta+\sigma)k(\beta)\tag{C$_\sigma$}\label{eq:C}
\end{align}
for all $\alpha,\beta\in\mathbb Z^2$. They differ from (E$_\sigma$) and (Z$_\sigma$) in that the function on the left-hand side is not required to coincide with
the function on the right-hand side; this reflects the fact that $D'$ need not be equal to $D$.
We shall repeatedly use the following elementary fact: if $x_1,\dots,x_r\in\mathbb Z^2$ are nonzero and $c_1,\dots,c_r\in\mathbb C$, then there is
$\gamma\in\mathbb Z^2$ with $s(x_j,\gamma)\neq c_j$ for all $j$, because each condition $s(x_j,\gamma)=c_j$ defines a line, and $\mathbb Z^2$ is not a
union of finitely many lines. Recall also that if $s(\sigma,\rho)\neq0$ and $s(\sigma,\gamma)=s(\rho,\gamma)=0$, then $\gamma=0$.

\begin{lemma}\label{lem:qcent}
Let $(k,K)$ satisfy \textup{(C$_\sigma$)}.
\begin{enumerate}
\item[{\rm (1)}] If $\sigma\neq0$, then $k(\beta)=0$ for all $\beta\neq-\sigma$.
\item[{\rm (2)}] If $\sigma=0$, then $k$ is constant on $\mathbb Z^2\setminus\{0\}$.
\end{enumerate}
\end{lemma}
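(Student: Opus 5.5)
The approach is to substitute well-chosen pairs $(\alpha,\beta)$ into (C$_\sigma$) so that the left-hand side vanishes. That isolates $s(\alpha,\beta+\sigma)k(\beta)$, and we then pick $\alpha$ to make this coefficient nonzero. The natural choice is $\alpha=-\beta$: then $\alpha+\beta=0$ and $s(-\beta,\beta)=0$, so (C$_\sigma$) at $(-\beta,\beta)$ gives
\[
0=s(-\beta,\beta+\sigma)k(\beta)=-s(\beta,\sigma)k(\beta).
\]
Hence $k(\beta)=0$ whenever $s(\beta,\sigma)\neq0$. Any $\alpha$ with $s(\alpha,\beta)=0$ also kills the left side, giving $s(\alpha,\beta+\sigma)k(\beta)=0$ for all $\alpha$ collinear with $\beta$. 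This is only useful if such an $\alpha$ can be chosen with $s(\alpha,\beta+\sigma)\ne0$.

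For (1), let $\sigma\ne0$ and $\beta\ne-\sigma$. If $s(\beta,\sigma)\neq0$ we are done by the above. Otherwise $\beta$ and $\sigma$ are collinear.
\begin{itemize}
\item If $\beta=0$, use (C$_\sigma$) at $(\alpha,0)$: the left side is $s(\alpha,0)K(\alpha)=0$, and the right side is $s(\alpha,\sigma)k(0)$. Choosing $\alpha$ with $s(\alpha,\sigma)\ne0$ gives $k(0)=0$.
\item If $\beta\ne0$ is collinear with $\sigma$, then $\beta+\sigma$ is collinear with $\beta$, so choosing $\alpha$ collinear with $\beta$ gives nothing. Instead use a rescaling trick: take any $\alpha$ with $s(\alpha,\beta)\ne0$ and compare (C$_\sigma$) at $(\alpha,\beta)$ with another equation computing $K(\alpha+\beta)$. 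Write $\alpha+\beta=\alpha'+\beta'$ with $\beta'=\beta+\gamma$ non-collinear with $\sigma$, so that $k(\beta')=0$ by the first step. Then (C$_\sigma$) at $(\alpha',\beta')$ gives $s(\alpha',\beta')K(\alpha+\beta)=0$; if $s(\alpha',\beta')\ne0$ this yields $K(\alpha+\beta)=0$. Plugging back into (C$_\sigma$) at $(\alpha,\beta)$ gives $s(\alpha,\beta+\sigma)k(\beta)=0$.
\end{itemize}
It remains to ensure $s(\alpha,\beta+\sigma)\ne0$. Since $\beta+\sigma\neq0$, this excludes only one line of $\alpha$'s. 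So we choose $\alpha$ off the finitely many bad lines: $s(\alpha,\beta)\neq0$, $s(\alpha,\beta+\sigma)\ne0$. We then choose the splitting $\alpha+\beta=\alpha'+\beta'$ so that $s(\beta',\sigma)\ne0$ and $s(\alpha',\beta')=s(\alpha+\beta,\beta')\ne0$. This uses the paper's stated fact that $\mathbb Z^2$ is not a finite union of lines: pick $\beta'$ avoiding the line through $\sigma$ and the line through $\alpha+\beta$, noting that $\alpha+\beta\neq0$ because $s(\alpha,\beta)\ne0$.

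For (2), with $\sigma=0$, (C$_0$) reads $s(\alpha,\beta)K(\alpha+\beta)=s(\alpha,\beta)k(\beta)$. So $K(\alpha+\beta)=k(\beta)$ whenever $s(\alpha,\beta)\ne0$; equivalently, $K(x)=k(\beta)$ whenever $s(x,\beta)\neq0$. Hence for nonzero $\beta,\beta'$ with a common $x$ satisfying $s(x,\beta)\ne0\ne s(x,\beta')$, we get $k(\beta)=K(x)=k(\beta')$. Such an $x$ always exists by the finite-union-of-lines fact, so $k$ is constant on $\mathbb Z^2\setminus\{0\}$ (and $K$ takes the same value there).

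The main obstacle is the collinear subcase of (1), which requires the two-step argument through $K$. The only care needed is in choosing $\alpha$ and $\beta'$ simultaneously to avoid the finitely many degenerate lines.
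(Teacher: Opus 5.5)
Your proof is correct and follows the paper's argument: the substitution $(-\beta,\beta)$ kills $k$ off the line through $\sigma$, and choosing an auxiliary $\beta'$ off two lines forces $K(\alpha+\beta)=0$, which then gives $k(\beta)=0$ at the remaining points collinear with $\sigma$; part (2) is exactly the paper's chaining through a common $x$. The only differences are cosmetic: the paper proves $K(\gamma)=0$ for all $\gamma\neq0$ once and treats $\beta=0$ together with the other collinear $\beta$, whereas you handle $\beta=0$ separately and prove $K=0$ only at the point you need.
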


\begin{proof}
(1) Equation (C$_\sigma$) at $(-\beta,\beta)$ gives $s(\sigma,\beta)k(\beta)=0$, so $k(\beta)=0$ whenever $s(\sigma,\beta)\neq0$.
Let $\gamma\neq0$ and choose $\beta$ with $s(\sigma,\beta)\neq0$ and $s(\gamma,\beta)\neq0$. Then (C$_\sigma$) at $(\gamma-\beta,\beta)$ gives\begin{center}
$s(\gamma,\beta)K(\gamma)=s(\gamma-\beta,\beta+\sigma)k(\beta)=0$,
\end{center} so $K(\gamma)=0$ for all $\gamma\neq0$.
Finally, let $s(\sigma,\beta)=0$ and $\beta\neq-\sigma$. Choose $\alpha$ with $s(\alpha,\beta+\sigma)\neq0$. Then $\alpha+\beta\neq0$, since
$s(-\beta,\beta+\sigma)=-s(\beta,\sigma)=0$. Now (C$_\sigma$) at $(\alpha,\beta)$ gives $0=s(\alpha,\beta+\sigma)k(\beta)$, so $k(\beta)=0$.

(2) Equation (C$_0$) at $(\gamma-\beta,\beta)$ gives $s(\gamma,\beta)K(\gamma)=s(\gamma,\beta)k(\beta)$, so $k(\beta)=K(\gamma)$ whenever $s(\gamma,\beta)\neq0$.
For nonzero $\beta,\beta'$ choose $\gamma$ with $s(\gamma,\beta)\neq0\neq s(\gamma,\beta')$; then $k(\beta)=K(\gamma)=k(\beta')$.
\end{proof}

\begin{lemma}\label{lem:qder-ne}
Let $\sigma\neq0$ and let $(f,F)$ satisfy \textup{(Q$_\sigma$)}. Then there are $\kappa,\lambda\in\mathbb C$ such that
$f(\alpha)=\kappa\alpha_1+\lambda\alpha_2$ for all $\alpha\neq-\sigma$.
\end{lemma}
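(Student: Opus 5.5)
The plan is to reduce (Q$_\sigma$) to a linear functional equation for $f$ alone by eliminating $F$, and then to solve that equation along lines parallel to $\sigma$ and across them.

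\textbf{Step 1: expressing $F$ through $f$.} Evaluating (Q$_\sigma$) at pairs $(\alpha,\beta)$ and $(\alpha',\beta')$ with the same sum $\gamma=\alpha+\beta=\alpha'+\beta'$ and $s(\alpha,\beta)\neq0\neq s(\alpha',\beta')$ gives two expressions for $F(\gamma)$. Equating them yields a relation purely in $f$. The most convenient specialisation is $\beta=\sigma$, where $s(\alpha,\beta+\sigma)=2s(\alpha,\sigma)$ and $s(\alpha+\sigma,\sigma)=s(\alpha,\sigma)$. This gives
\[
F(\alpha+\sigma)=f(\alpha)+2f(\sigma)\qquad\text{whenever } s(\alpha,\sigma)\neq0 .
\]
Similarly, using the pair $(\alpha,\beta)$ together with the pair $(\alpha+\beta-\sigma,\sigma)$, we can substitute $F(\alpha+\beta)=f(\alpha+\beta-\sigma)+2f(\sigma)$ into (Q$_\sigma$). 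This produces an equation of the form
\[
s(\alpha,\beta)\bigl(f(\alpha+\beta-\sigma)+2f(\sigma)\bigr)=f(\alpha)s(\alpha+\sigma,\beta)+s(\alpha,\beta+\sigma)f(\beta),
\]
valid on a Zariski-dense set of pairs, namely those avoiding finitely many lines.

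\textbf{Step 2: the special solutions.} We check that every linear $\ell(\alpha)=\kappa\alpha_1+\lambda\alpha_2$ solves this equation, together with $F=\ell(\cdot)+\ell(\sigma)$. This is exactly the alternating trilinear form argument from Proposition~\ref{prop:q0}. Choose $\kappa,\lambda$ so that $\ell$ agrees with $f$ on two independent vectors, say $\rho$ and $\rho+\sigma$ with $s(\sigma,\rho)\neq0$. Subtracting $\ell$, we may assume $f(\rho)=f(\rho+\sigma)=0$, and we need to show $f\equiv0$ off $-\sigma$. Note that, unlike Lemma~\ref{lem:der-ne}, a general $\ell$ is a solution, so two free parameters are expected.

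\textbf{Step 3: propagation.} Taking $\beta$ fixed with $s(\sigma,\beta)\neq0$ and comparing the equation at $(\alpha,\beta)$ and $(\alpha+\sigma,\beta)$, we get a recursion in steps of $\sigma$. In these steps the coefficient $s(\alpha+\sigma,\beta)$ is affine in the position along the line. This recursion, together with the additivity-type relation obtained from Step~1 (namely $f(\alpha+\beta-\sigma)=f(\alpha)+f(\beta)-\text{const}$ on generic pairs, once the coefficients are matched), should force $f$ to be additive on generic pairs. I would then mimic steps (2)--(6) of Lemma~\ref{lem:der-ne}:
\begin{enumerate}
\item first show $f=0$ on all $\alpha$ with $s(\sigma,\alpha)\neq0$, using the elementary fact that $\mathbb Z^2$ is not a finite union of lines to pick auxiliary $\gamma$;
\item then handle the line $\mathbb Z\sigma$ by evaluating (Q$_\sigma$) at $(\rho,\alpha-\rho)$.
\end{enumerate}
Here the coefficient is $s(\rho,\alpha+\sigma)$, which vanishes exactly when $\alpha=-\sigma$ for $\alpha\in\mathbb Z\sigma$. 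This is why $f(-\sigma)$ stays free, consistent with the statement.

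\textbf{Main obstacle.} The hardest step is the elimination of $F$ and the control of the exceptional lines where the structure constants vanish, specifically $s(\alpha,\beta)=0$, $s(\alpha,\sigma)=0$, and $\alpha+\beta\in\mathbb Z\sigma$. I will first try to prove linearity on the open set $\{s(\sigma,\alpha)\neq0\}$ via a Cauchy-type argument. Then I will extend to the line $\mathbb Z\sigma\setminus\{-\sigma\}$ by a single well-chosen substitution, checking that the coefficient there is nonzero.
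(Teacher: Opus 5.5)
There is a genuine gap. Your normalisation by a linear $\ell$ matches the paper, and so does your overall architecture: first kill $f$ on $N=\{\alpha: s(\sigma,\alpha)\neq0\}$, then on the complementary line, with $-\sigma$ exceptional. But the central step is only announced, and the mechanism you sketch for it does not work.

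The ``additivity-type relation'' is not a consequence of your Step~1. Solving your eliminated equation for $f(\alpha+\beta-\sigma)$ gives
\[
f(\alpha+\beta-\sigma)=f(\alpha)+f(\beta)-2f(\sigma)+\frac{f(\alpha)\,s(\sigma,\beta)+s(\alpha,\sigma)\,f(\beta)}{s(\alpha,\beta)}.
\]
The last term is a constant only if one already knows that $f$ is linear, so ``matching the coefficients'' presupposes the conclusion. Comparing the equation at $(\alpha,\beta)$ and $(\alpha+\sigma,\beta)$ is not a recursion along one line either. After substituting $F(\gamma+\sigma)=f(\gamma)+2f(\sigma)$, it ties the values of $f$ on $\alpha+\mathbb Z\sigma$ to those on $\alpha+\beta+\mathbb Z\sigma$. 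Nor can steps (2)--(6) of Lemma~\ref{lem:der-ne} simply be copied: there the pair $(x,\sigma)$ yields $\sigma$-periodicity only because $F=f$, whereas here it merely links $F$ to $f$.

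The missing idea is a \emph{second} expression for $F$ on $N$. It comes from pairs whose second entry is a multiple of $\sigma$ other than $\sigma$ itself; your Step~3 restricts to $s(\sigma,\beta)\neq0$ and so never sees them.
\begin{enumerate}
\item The pair $(\gamma+\sigma,-\sigma)$ gives $F(\gamma)=f(\gamma+\sigma)$ for $\gamma\in N$.
\item Combined with your relation, this gives $f(\alpha+2\sigma)=f(\alpha)+2f(\sigma)$ on $N$.
\item The pair $(\alpha,-2\sigma)$, again using $F(\gamma)=f(\gamma+\sigma)$, gives $f(\alpha-\sigma)=f(\alpha)+\tfrac12 f(-2\sigma)$ on $N$.
\item Together these give $f(\alpha+\sigma)=f(\alpha)+f(\sigma)$ on $N$.
\end{enumerate}
With your normalisation $f(\rho)=f(\rho+\sigma)=0$, this forces $f(\sigma)=0$. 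Hence $f$ is $\sigma$-periodic on $N$ and $F=f$ on $N$. Only at this point does \textup{(Q$_\sigma$)}, restricted to $\alpha,\beta,\alpha+\beta\in N$, become \textup{(E$_\sigma$)}. Comparing $(\alpha,\rho')$ with $(\alpha,\rho'+\sigma)$ then gives $f(\alpha+\rho')=f(\alpha)+f(\rho')$ and $f(\alpha)s(\sigma,\rho')=f(\rho')s(\sigma,\alpha)$. Taking $\rho'=\rho$ or $\rho'=2\rho$ yields $f=0$ on $N$; this is the content of steps (1)--(6) of the paper's proof.

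Your last step also uses the wrong pair. The pair $(\rho,\alpha-\rho)$ involves $F(\alpha)$ but not $f(\alpha)$. The coefficient $s(\rho,\alpha+\sigma)$ you quote belongs to the pair $(\rho,\alpha)$, and using that pair requires $F(\rho+\alpha)=0$, which again comes from $F=f$ on $N$. Note also that the complement of $N$ is $\{\gamma: s(\sigma,\gamma)=0\}$. This is larger than $\mathbb Z\sigma$ when $\sigma$ is not primitive, although the same pair handles those extra points.
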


\begin{proof}
Choose $\rho$ with $s(\sigma,\rho)\neq0$ and let $\ell(\alpha)=\big(f(\sigma)s(\alpha,\rho)-f(\rho)s(\alpha,\sigma)\big)/s(\sigma,\rho)$.
Then $\ell$ is additive, $\ell(\sigma)=f(\sigma)$ and $\ell(\rho)=f(\rho)$, and, by the identity used in the proof of Proposition \ref{prop:q0},
the pair $(\ell,\ell(\cdot+\sigma))$ satisfies (Q$_\sigma$). Hence $g=f-\ell$ and $G=F-\ell(\cdot+\sigma)$ satisfy (Q$_\sigma$), and $g(\sigma)=g(\rho)=0$.
It suffices to prove that $g(\alpha)=0$ for $\alpha\neq-\sigma$. Put $N=\{\alpha : s(\sigma,\alpha)\neq0\}$; note that $N+\mathbb Z\sigma=N$.
\begin{enumerate}
\item (Q$_\sigma$) at $(\gamma+\sigma,-\sigma)$ gives $s(\gamma,\sigma)G(\gamma)=s(\gamma,\sigma)g(\gamma+\sigma)$, so $G(\gamma)=g(\gamma+\sigma)$ for $\gamma\in N$.
\item (Q$_\sigma$) at $(\alpha,\sigma)$, $\alpha\in N$, together with (1) and $g(\sigma)=0$, gives $g(\alpha+2\sigma)=g(\alpha)$.
\item (Q$_\sigma$) at $(\alpha,-2\sigma)$, $\alpha\in N$, together with (1), gives $g(\alpha-\sigma)=g(\alpha)+\frac12g(-2\sigma)$.
\item Applying (3) to $\alpha+\sigma$ and to $\alpha+2\sigma$ and using (2), we get $g(\alpha)=g(\alpha+\sigma)+\frac12g(-2\sigma)$ and
$g(\alpha+\sigma)=g(\alpha)+\frac12g(-2\sigma)$. Hence $g(-2\sigma)=0$ and $g(\alpha+\sigma)=g(\alpha)$ for all $\alpha\in N$.
\item Let $\rho'\in N$ with $g(\rho')=0$, and let $\alpha,\alpha+\rho'\in N$. By (1) and (4), $G(\alpha+\rho')=G(\alpha+\rho'+\sigma)=g(\alpha+\rho')$ and $g(\rho'+\sigma)=0$.
Then (Q$_\sigma$) at $(\alpha,\rho')$ and at $(\alpha,\rho'+\sigma)$ read
\begin{align*}
s(\alpha,\rho')\,g(\alpha+\rho')&=g(\alpha)\big(s(\alpha,\rho')+s(\sigma,\rho')\big),\\
\big(s(\alpha,\rho')+s(\alpha,\sigma)\big)g(\alpha+\rho')&=g(\alpha)\big(s(\alpha,\rho')+s(\alpha,\sigma)+s(\sigma,\rho')\big).
\end{align*}
Subtracting, $s(\alpha,\sigma)\big(g(\alpha+\rho')-g(\alpha)\big)=0$, so $g(\alpha+\rho')=g(\alpha)$, and then the first equation gives $s(\sigma,\rho')g(\alpha)=0$, i.e.\ $g(\alpha)=0$.
\item Applying (5) with $\rho'=\rho$ and $\alpha=2\rho$ (note that $2\rho,3\rho\in N$) gives $g(2\rho)=0$. Let $\alpha\in N$. If $\alpha+\rho\in N$, then (5) with $\rho'=\rho$ gives $g(\alpha)=0$;
otherwise $s(\sigma,\alpha+2\rho)=s(\sigma,\rho)\neq0$, and (5) with $\rho'=2\rho$ gives $g(\alpha)=0$. Thus $g=0$ on $N$.
\item Let $s(\sigma,\gamma)=0$ and $\gamma\neq-\sigma$. Then $\rho+\gamma\in N$, so $G(\rho+\gamma)=g(\rho+\gamma+\sigma)=0$ by (1) and (6), and
(Q$_\sigma$) at $(\rho,\gamma)$ gives $s(\rho,\gamma+\sigma)g(\gamma)=0$. Since $s(\sigma,\gamma+\sigma)=0$ and $\gamma+\sigma\neq0$, we have $s(\rho,\gamma+\sigma)\neq0$, so $g(\gamma)=0$.
\end{enumerate}
Hence $f=\ell$ outside $-\sigma$, and $\ell(\alpha)=\kappa\alpha_1+\lambda\alpha_2$ with $\kappa=\ell(1,0)$, $\lambda=\ell(0,1)$.
\end{proof}

\begin{lemma}\label{lem:qder-zero}
Let $(f,F)$ satisfy \textup{(Q$_0$)}. Then there are $\kappa,\lambda,c\in\mathbb C$ such that
$f(\alpha)=\kappa\alpha_1+\lambda\alpha_2+c$ for all $\alpha\neq0$.
\end{lemma}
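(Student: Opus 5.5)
Our plan is to follow the pattern of Lemma~\ref{lem:der-zero}, with the extra freedom that $F$ need not coincide with $f$ accounting for the constant $c$. For $\sigma=0$, (Q$_0$) reads
\[
s(\alpha,\beta)F(\alpha+\beta)=s(\alpha,\beta)\bigl(f(\alpha)+f(\beta)\bigr),
\]
so whenever $s(\alpha,\beta)\neq0$ we get $F(\alpha+\beta)=f(\alpha)+f(\beta)$. First we compare $F$ and $f$. For $\gamma\neq0$ we pick $\beta$ with $s(\gamma,\beta)\neq0$ and apply the relation to the pair $(\gamma-\beta,\beta)$; since $s(\gamma-\beta,\beta)=s(\gamma,\beta)$, this gives $F(\gamma)=f(\gamma-\beta)+f(\beta)$. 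Next we apply it to $(\gamma-\beta,\beta)$ and to $(\gamma-\beta-\beta',\beta+\beta')$ for suitable $\beta'$, which yields a Cauchy-type functional equation.

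The cleanest route is to set $c:=F(\gamma_0)-f(\gamma_0)$ for some fixed $\gamma_0\neq0$, and to show that $h=f-c$ and $H=F-2c$ satisfy (Q$_0$) with $H=h$ on $\mathbb Z^2\setminus\{0\}$. In other words, we want to show that $F-f$ is the constant $c$ on nonzero vectors. Once this is done, $h$ satisfies (E$_0$) away from $0$. The argument of Lemma~\ref{lem:der-zero} only uses pairs $(\alpha,\beta)$ with $s(\alpha,\beta)\neq0$, so in particular $\alpha,\beta,\alpha+\beta\neq0$. It therefore applies verbatim and gives $h(\alpha)=\lambda\alpha_2+\kappa\alpha_1$ for $\alpha\neq0$. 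This is the claim.

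\textbf{Main obstacle: showing $F-f$ is constant on nonzero vectors.} We would write $F(\alpha+\beta)=f(\alpha)+f(\beta)$ for three summands in two groupings. Take $\alpha,\beta,\gamma$ with $s(\alpha,\beta)$, $s(\alpha+\beta,\gamma)$, $s(\beta,\gamma)$ and $s(\alpha,\beta+\gamma)$ all nonzero. This is possible by the elementary fact that $\mathbb Z^2$ is not a finite union of lines. Then
\[
F(\alpha+\beta+\gamma)=F(\alpha+\beta)-f(\alpha+\beta)+f(\alpha)+f(\beta)+f(\gamma)-\bigl(\text{the same quantity with } \beta+\gamma\bigr).
\]
Writing $d=F-f$, this gives $d(\alpha+\beta)=d(\beta+\gamma)$ for generic triples. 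Hence $d(x)=d(y)$ for any two nonzero $x,y$ that can be written as $\alpha+\beta$ and $\beta+\gamma$ with the genericity conditions satisfied.

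Given nonzero $x,y$, we choose $\beta$ generic (avoiding finitely many lines) and set $\alpha=x-\beta$, $\gamma=y-\beta$. The required nonvanishing conditions become $s(x,\beta)\neq0$, $s(x,y-\beta)\neq0$, $s(\beta,y)\neq0$ and $s(x-\beta,y)\neq0$. Each of these excludes only a line or an affine line in $\beta$, except when $s(x,y)$ makes a condition degenerate. In the degenerate case where $x,y$ are parallel, we would connect them through an intermediate nonparallel vector. This shows that $d$ is constant, equal to $c$, on $\mathbb Z^2\setminus\{0\}$, and the argument above finishes the proof. The value $f(0)$ remains unconstrained, just as in Lemma~\ref{lem:der-zero}.
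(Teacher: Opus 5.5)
Your proposal is correct and is essentially the paper's proof: with $\alpha=x-\beta$ and $\gamma=y-\beta$, your four pairs are exactly the paper's $(x-\beta,\beta)$, $(x,y-\beta)$, $(\beta,y-\beta)$, $(x-\beta,y)$, after which $h=f-c$ satisfies (E$_0$) and Lemma~\ref{lem:der-zero} finishes. Two small fixes: your displayed identity is garbled and should read $F(\alpha+\beta+\gamma)=f(\alpha)+f(\beta)+f(\gamma)-d(\alpha+\beta)=f(\alpha)+f(\beta)+f(\gamma)-d(\beta+\gamma)$; and the parallel case needs no separate treatment, since for $s(x,y)=0$ the four conditions simply reduce to $s(x,\beta)\neq0\neq s(y,\beta)$.
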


\begin{proof}
Equation (Q$_0$) says that $F(\alpha+\beta)=f(\alpha)+f(\beta)$ whenever $s(\alpha,\beta)\neq0$.
We claim that $F-f$ is constant on $\mathbb Z^2\setminus\{0\}$. Let $x,y\neq0$ and choose $\beta$ with
$s(x,\beta)\notin\{0,s(x,y)\}$ and $s(y,\beta)\notin\{0,s(y,x)\}$. Then the four pairs $(x,y-\beta)$, $(x-\beta,\beta)$, $(x-\beta,y)$, $(\beta,y-\beta)$
have nonzero value of $s$, so
$$
f(x)+f(y-\beta)=F(x+y-\beta)=f(x-\beta)+f(y),$$$$ F(x)=f(x-\beta)+f(\beta),\quad F(y)=f(\beta)+f(y-\beta).
$$
Hence $F(x)-F(y)=f(x-\beta)-f(y-\beta)=f(x)-f(y)$, which proves the claim. Let $c$ be the constant value of $F-f$ on $\mathbb Z^2\setminus\{0\}$
and put $h=f-c$. If $s(\alpha,\beta)\neq0$, then $\alpha+\beta\neq0$, so $h(\alpha+\beta)=F(\alpha+\beta)-2c=h(\alpha)+h(\beta)$.
Thus $h$ satisfies (E$_0$), and Lemma \ref{lem:der-zero} gives $h(\alpha)=\kappa\alpha_1+\lambda\alpha_2$ for $\alpha\neq0$, with $\kappa=h(1,0)$, $\lambda=h(0,1)$.
\end{proof}

\begin{proof}[Proof of Theorem \ref{thm:q0}]
All maps on the right-hand sides are quasi-derivations by Proposition \ref{prop:q0}, and $\QDer_\theta(\cS(0))$ is a vector space.
Conversely, let $D,D'\in\mathfrak{gl}_\theta(\cS(0))$ satisfy \eqref{qder}.

\emph{Even case.} For $\sigma\in\mathbb Z^2$ put $f_\sigma(\alpha)=D(L_\alpha)_{L_{\alpha+\sigma}}$, $g_\sigma(\alpha)=D(G_\alpha)_{G_{\alpha+\sigma}}$,
$F_\sigma(\alpha)=D'(L_\alpha)_{L_{\alpha+\sigma}}$ and $H_\sigma(\alpha)=D'(G_\alpha)_{G_{\alpha+\sigma}}$.
Comparing the coefficients of $L_{\alpha+\beta+\sigma}$ in \eqref{qder} for the pair $(L_\alpha,L_\beta)$ shows that $(f_\sigma,F_\sigma)$ satisfies (Q$_\sigma$).
The pair $(L_\alpha,G_\beta)$ gives
\[
s(\alpha,\beta)H_\sigma(\alpha+\beta)=f_\sigma(\alpha)s(\alpha+\sigma,\beta)+s(\alpha,\beta+\sigma)g_\sigma(\beta),
\]
and subtracting (Q$_\sigma$) we see that $(g_\sigma-f_\sigma,H_\sigma-F_\sigma)$ satisfies (C$_\sigma$).
By Lemmas \ref{lem:qcent}, \ref{lem:qder-ne} and \ref{lem:qder-zero} there are constants $\kappa_\sigma,\lambda_\sigma$ such that, with
$\ell_\sigma(\alpha)=\kappa_\sigma\alpha_1+\lambda_\sigma\alpha_2$,
\[
f_\sigma(\alpha)=g_\sigma(\alpha)=\ell_\sigma(\alpha)\quad(\sigma\neq0,\ \alpha\neq-\sigma),\]
\[
f_0(\alpha)=\ell_0(\alpha)+a,\quad g_0(\alpha)=\ell_0(\alpha)+a+t\quad(\alpha\neq0)
\]
for some $a,t\in\mathbb C$. For $\sigma\notin\{0,(-1,0),(0,-1)\}$ we have $\kappa_\sigma=f_\sigma(1,0)$ and $\lambda_\sigma=f_\sigma(0,1)$, which are the
coefficients of $L_{(1,0)+\sigma}$ in $D(L_{1,0})$ and of $L_{(0,1)+\sigma}$ in $D(L_{0,1})$. Hence the set $S$ of those $\sigma$ with $(\kappa_\sigma,\lambda_\sigma)\neq(0,0)$ is finite.
Put
\[
Z=D-\sum_{\sigma\in S}V_{\sigma,\kappa_\sigma,\lambda_\sigma}-a\,\id-t\,T.
\]
By the formulas above, the coefficient of $X_{\alpha+\sigma}$ in $Z(X_\alpha)$ ($X\in\{L,G\}$) vanishes whenever $\alpha+\sigma\neq0$.
Thus $Z(L_\alpha)\in\langle L_{0,0}\rangle$ and $Z(G_\alpha)\in\langle G_{0,0}\rangle$ for all $\alpha$, i.e.\ $Z\in\Ann^Q_{\bar0}(\cS(0))$.

\emph{Odd case.} Put $u_\sigma(\alpha)=D(L_\alpha)_{G_{\alpha+\sigma}}$, $v_\sigma(\alpha)=D(G_\alpha)_{L_{\alpha+\sigma}}$,
$U_\sigma(\alpha)=D'(L_\alpha)_{G_{\alpha+\sigma}}$ and $W_\sigma(\alpha)=D'(G_\alpha)_{L_{\alpha+\sigma}}$.
Since $[G_{\alpha+\sigma},L_\beta]=s(\alpha+\sigma,\beta)G_{\alpha+\beta+\sigma}$, the pair $(L_\alpha,L_\beta)$ shows that $(u_\sigma,U_\sigma)$ satisfies (Q$_\sigma$).
Since $[G,G]=0$, the pair $(L_\alpha,G_\beta)$ gives $s(\alpha,\beta)W_\sigma(\alpha+\beta)=s(\alpha,\beta+\sigma)v_\sigma(\beta)$, i.e.\ $(v_\sigma,W_\sigma)$ satisfies (C$_\sigma$).
By Lemmas \ref{lem:qcent}, \ref{lem:qder-ne} and \ref{lem:qder-zero},
\[
u_\sigma(\alpha)=\ell_\sigma(\alpha),\quad v_\sigma(\alpha)=0\quad(\sigma\neq0,\ \alpha\neq-\sigma),\qquad
u_0(\alpha)=\ell_0(\alpha)+c,\quad v_0(\alpha)=b\quad(\alpha\neq0),
\]
with $\ell_\sigma(\alpha)=\kappa_\sigma\alpha_1+\lambda_\sigma\alpha_2$. As before, only finitely many $(\kappa_\sigma,\lambda_\sigma)$ are nonzero, and
$Z=D-\sum_{\sigma\in S}V^o_{\sigma,\kappa_\sigma,\lambda_\sigma}-b\,\Th-c\,\Th'$ satisfies $Z(L_\alpha)\in\langle G_{0,0}\rangle$ and $Z(G_\alpha)\in\langle L_{0,0}\rangle$,
i.e.\ $Z\in\Ann^Q_{\bar1}(\cS(0))$.

\emph{Uniqueness.} Suppose that $\sum_{\sigma\in S}V_{\sigma,\kappa_\sigma,\lambda_\sigma}+a\,\id+t\,T+Z=0$ with $Z\in\Ann^Q_{\bar0}$.
For fixed $\sigma$, the coefficient of $L_{\alpha+\sigma}$ in the image of $L_\alpha$ gives $\kappa_\sigma\alpha_1+\lambda_\sigma\alpha_2+a\,\delta_{\sigma,0}=0$
for all $\alpha\neq-\sigma$. Since $\mathbb Z^2\setminus\{-\sigma\}$ contains three affinely independent points, $\kappa_\sigma=\lambda_\sigma=0$ for all $\sigma$ and $a=0$.
The coefficient of $G_{\alpha}$ in the image of $G_\alpha$, $\alpha\neq0$, then gives $t=0$, and finally $Z=0$.
The odd case is identical: the coefficients of $G_{\alpha+\sigma}$ in the images of $L_\alpha$ give $\kappa_\sigma=\lambda_\sigma=c=0$,
the coefficient of $L_\alpha$ in the image of $G_\alpha$, $\alpha\neq0$, gives $b=0$, and then $Z=0$.
\end{proof}

\begin{remark}\label{rem:q0-der}
On $\cS(0)$ we have $\ad_{L_\beta}=V_{\beta,\beta_2,-\beta_1}$, $\ad_{G_\beta}=V^o_{\beta,\beta_2,-\beta_1}$, $D_1=V_{0,0,1}$, $D_2=V_{0,1,0}$,
$O_1=V^o_{0,0,1}$, $O_2=V^o_{0,1,0}$, while $E_L,E_G\in\Ann^Q_{\bar0}$ and $O_5,\Th_0\in\Ann^Q_{\bar1}$.
For $\beta\neq0$ the vectors $(\beta_2,-\beta_1)$ and $(\beta_1,\beta_2)$ are linearly independent, so every $V_{\beta,\kappa,\lambda}$ is a linear combination of
$\ad_{L_\beta}$ and $W_\beta=V_{\beta,\beta_1,\beta_2}$, and similarly for $V^o_{\beta,\kappa,\lambda}$, $\ad_{G_\beta}$ and $W^o_\beta=V^o_{\beta,\beta_1,\beta_2}$.
Hence, by Theorems \ref{thm:main} and \ref{thm:q0},
\begin{align*}
\QDer_{\bar0}(\cS(0))&=\Der_{\bar0}(\cS(0))+\big\langle W_\beta : \beta\in\mathbb Z^2\setminus\{0\}\big\rangle+\langle\id\rangle+\Ann^Q_{\bar0}(\cS(0)),\\
\QDer_{\bar1}(\cS(0))&=\Der_{\bar1}(\cS(0))+\big\langle W^o_\beta : \beta\in\mathbb Z^2\setminus\{0\}\big\rangle+\langle\Th'\rangle+\Ann^Q_{\bar1}(\cS(0)).
\end{align*}
In particular, for $q=0$ (in contrast with the case $q\neq0$) not every quasi-derivation is the sum of a derivation, a multiple of $\id$ and a map with values in the center.
\end{remark}

\subsection{\normalsize Corollaries}

Summarizing Theorems \ref{thm:main}, \ref{thm:qnotZ} and \ref{thm:qZ}, we have the following statement.

\begin{theoremM}
Let $q\in\mathbb C^*$. Then
$$\QDer\big(\cS(q)\big)=\Inn\big(\cS(q)\big)\oplus\langle\id\rangle\oplus\langle D_1\rangle\oplus\langle D_3,D_4\rangle\oplus\Ann^Q\big(\cS(q)\big),$$
where $\langle D_3,D_4\rangle$ and $\Ann^Q(\cS(q))$ occur only for $q\in\mathbb Z^*$.
In particular, $\QDer(\cS(q))=\Der(\cS(q))\oplus\langle\id\rangle$ for $q\notin\mathbb Z$, and
$\QDer(\cS(q))=\Der(\cS(q))+\langle\id\rangle+\Ann^Q(\cS(q))$ for $q\in\mathbb Z^*$.
\end{theoremM}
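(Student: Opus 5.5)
The Main Theorem is a summary statement, so I will assemble it from Theorems~\ref{thm:main}, \ref{thm:qnotZ} and \ref{thm:qZ} rather than recompute anything. First I split $\QDer(\cS(q))=\QDer_{\bar0}(\cS(q))\oplus\QDer_{\bar1}(\cS(q))$, which holds by definition since we only consider homogeneous quasi-derivations, and I use $\Inn(\cS(q))=\ad(\cS_{\bar0})\oplus\ad(\cS_{\bar1})$.

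For $q\notin\mathbb Z$, Theorem~\ref{thm:qnotZ} gives $\QDer_{\bar0}=\ad(\cS_{\bar0})\oplus\langle D_1,\id\rangle$ and $\QDer_{\bar1}=\ad(\cS_{\bar1})$. Summing these yields $\Inn\oplus\langle\id\rangle\oplus\langle D_1\rangle$. Theorem~\ref{thm:main} identifies $\Der(\cS(q))=\Inn\oplus\langle D_1\rangle$, which gives $\QDer=\Der\oplus\langle\id\rangle$.

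For $q\in\mathbb Z^*$, Theorem~\ref{thm:qZ} gives the even part $\ad(\cS_{\bar0})\oplus\langle D_1,D_3,D_4,\id\rangle\oplus\Ann^Q_{\bar0}$ and the odd part $\ad(\cS_{\bar1})\oplus\Ann^Q_{\bar1}$. Adding them and regrouping yields the displayed decomposition, with $\Ann^Q=\Ann^Q_{\bar0}\oplus\Ann^Q_{\bar1}$.

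The only point needing care is the phrase ``in particular'' for $q\in\mathbb Z^*$. There the sum $\Der+\langle\id\rangle+\Ann^Q$ need not be direct. When $q\in2\mathbb Z^*$ we have $D_6=\tilde g_{0,-3q/2}\in\Der_{\bar1}\cap\Ann^Q_{\bar1}$, which is why the statement writes ``$+$'' there. So I will simply note that $\Der(\cS(q))\subseteq\Inn\oplus\langle D_1,D_3,D_4\rangle\oplus\Ann^Q$ by Theorem~\ref{thm:main}, since $D_6\in\Ann^Q_{\bar1}$. Both inclusions of $\QDer=\Der+\langle\id\rangle+\Ann^Q$ then follow from the direct decomposition.

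Directness of the main decomposition is inherited from Step~6 of the proof of Theorems~\ref{thm:qnotZ} and~\ref{thm:qZ}, applied in each parity separately. No step is a real obstacle; the only thing to watch is the $D_6$ overlap just described.
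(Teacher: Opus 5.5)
Your proposal is correct and follows the paper's own route: the Main Theorem is stated there simply as a summary of Theorems \ref{thm:main}, \ref{thm:qnotZ} and \ref{thm:qZ}, assembled parity by parity as you do. Your observation that $D_6=\tilde g_{0,-\frac{3q}{2}}\in\Der_{\bar1}\cap\Ann^Q_{\bar1}$ for $q\in2\mathbb Z^*$ correctly explains why the ``in particular'' statement for $q\in\mathbb Z^*$ claims only a sum rather than a direct sum.
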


\begin{definition}
A nonzero $\delta$-derivation $\varphi$ is \textit{nontrivial} if $\delta\notin\{0;1\}$ and $\varphi\notin\langle\id\rangle$.
\end{definition}

\begin{theorem}\label{thm:delta}
Let $q\neq0$ and $\delta\notin\{0;1\}$. Then
$$\Der_{\delta,\bar0}(\cS(q))=\begin{cases}\langle\id\rangle, & \delta=\frac12,\\ 0, & \delta\neq\frac12,\end{cases}\qquad
\Der_{\delta,\bar1}(\cS(q))=\begin{cases}\langle D_6\rangle, & q\in2\mathbb Z^*,\\ 0, & q\notin2\mathbb Z.\end{cases}$$
In particular, $\cS(q)$ admits nontrivial homogeneous $\delta$-derivations if and only if $q\in2\mathbb Z^*$, and they are the nonzero multiples of $D_6$
(for $\delta=\frac12$, the nontrivial $\frac12$-derivations are the maps $l\,\id+c\,D_6$ with $c\neq0$);
the map $D_6$ is a $\delta$-derivation for every $\delta$.
\end{theorem}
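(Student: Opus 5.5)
Since $\delta\neq0$, every $\delta$-derivation $\varphi$ of degree $\theta$ is a quasi-derivation with $D'=\delta^{-1}\varphi$. By the Main Theorem (Theorems \ref{thm:qnotZ} and \ref{thm:qZ}) we can therefore write
\[
\varphi=\Delta+l\,\id+Z,
\]
where $\Delta\in\Der_\theta(\cS(q))$, $l\in\mathbb C$ (with $l=0$ if $\theta=\bar1$), and $Z\in\Ann^Q_\theta(\cS(q))$. The term $Z$ is present only for $q\in\mathbb Z^*$. The plan is to substitute this decomposition into the $\delta$-derivation identity and read off constraints coefficient by coefficient, using the uniqueness (directness) statements of Step 6 and Theorem \ref{thm:main}.

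First we compute both sides of the identity. Because $\Delta$ is a derivation, $\id$ satisfies $[x,y]=\frac12([x,y]+[x,y])$, and $Z$ annihilates products (its values are central), the identity $\varphi([x,y])=\delta([\varphi x,y]\pm[x,\varphi y])$ becomes
\[
\Delta([x,y])+l[x,y]+Z([x,y])=\delta\,\Delta([x,y])+2\delta l[x,y].
\]
Equivalently,
\[
(1-\delta)\Delta([x,y])+(1-2\delta)l[x,y]+Z([x,y])=0\qquad\text{for all }x,y.
\]
The derived superalgebra $[\cS,\cS]$ contains every basis vector except possibly those that never occur as products. Since $[G_{0,0},G_{m,i}]=2qL_{m,i}$, all $L_{m,i}$ lie in it. By \eqref{useful} and $q\ne0$, every $G_{m,i}$ with $m\ne0$ lies in it. The $G_{0,j}$ arise from $[L_{m,i},G_{-m,j-i}]$ unless the coefficient vanishes identically, and by the computation in Lemma~\ref{lem:outer-are-der} this happens only for $j=-\frac{3q}{2}$. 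So the map
\[
\Psi=(1-\delta)\Delta+(1-2\delta)l\,\id+Z
\]
vanishes on every basis vector except possibly $G_{0,-3q/2}$ when $q\in2\mathbb Z^*$.

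\emph{Even case.} Here $\Psi$ vanishes on all of $\cS$, since $G_{0,-3q/2}$ is odd and $\Psi$ is even but that single vector is still odd input; we check it directly as follows. Take $x=L_{1,0}$ and $y=G_{-1,-3q/2}$; a direct check shows the coefficient need not vanish, so instead we use the super Jacobi identity. If $\Psi$ is killed on $[\cS,\cS]$, then $\Psi(G_{0,-3q/2})$ is determined via $\Delta$'s derivation property. A simpler route: $\Psi$ is a quasi-derivation that is zero on $[\cS,\cS]$. Applying Step 6 of the directness argument to the relation $(1-\delta)\Delta+(1-2\delta)l\,\id+Z=0$ on the $L$'s (only $L$'s and $G_{m,i}$ with $m\neq0$ are used there) gives $(1-2\delta)l=0$ and $Z=0$ restricted to $\cS_{\bar0}$, and then $\Delta$ vanishes on these vectors. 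From $(1-\delta)\Delta=0$ on $L_{m,i}$ and on $G_{m,i}$ with $m\neq0$, we expect to conclude that $\Delta=0$ as a derivation. Indeed $\Delta(G_{0,j})$ is forced through $[L_{1,0},G_{0,j}]=qG_{1,j}$ (up to a nonzero factor $\ne0$ generically) and the pair $[G_{0,j},G_{1,0}]$. Hence $\varphi=l\,\id$ with $l=0$ unless $\delta=\frac12$.

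\emph{Odd case.} Here $l=0$ and $\Psi=(1-\delta)\Delta+Z$ vanishes on all basis vectors except possibly $G_{0,-3q/2}$. Write $\Delta=\ad_x+cD_6$, where $c=0$ unless $q\in2\mathbb Z^*$. Comparing images of $L_{m,i}$, exactly as in the second part of Step 6, gives $\ad_x=0$, i.e.\ $x=0$. Then $\Delta=cD_6$ and $Z+(1-\delta)cD_6$ vanishes except at $G_{0,-3q/2}$. So $\varphi=cD_6+Z$ is supported, up to $Z$, on that vector, and the values of $Z$ elsewhere are forced to zero. Thus $\varphi\in\langle D_6\rangle$. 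Conversely, $D_6$ is a $\delta$-derivation for every $\delta$, because both sides of the identity vanish: its value is central and its source never appears as a product.

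The main obstacle is the even-case handling of $G_{0,-3q/2}$ and the precise bookkeeping that forces $\Delta=0$. I would resolve it by applying the identity to the pair $(L_{1,0},G_{-1,-3q/2})$ and to $(G_{0,-3q/2},G_{0,0})$, whose product is $2qL_{0,-3q/2}\neq0$. This pins the value $\varphi(G_{0,-3q/2})$ in terms of $\varphi$ on vectors that have already been handled.
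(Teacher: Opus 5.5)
Your strategy is the paper's. You decompose $\varphi=\Delta+l\,\id+Z$ via Theorems~\ref{thm:qnotZ} and~\ref{thm:qZ}, observe that $\Psi=(1-\delta)\Delta+(1-2\delta)l\,\id+Z$ vanishes on every basis vector lying in $[\cS,\cS]$, and treat $G_{0,-\frac{3q}{2}}$ separately. Getting $(1-2\delta)l=0$ from the Step~6 computation at $L_{1,0}=\frac1{2q}[G_{0,0},G_{1,0}]$ is a valid substitute for the paper's use of $\Delta(L_{0,0})_{L_{0,0}}=0$.

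However, the odd case contains a step that fails as written. Comparing images of the $L_{m,i}$ only gives $[x,\cS_{\bar0}]=0$, and for $q\in2\mathbb Z^*$ this does not force $x=0$. By \eqref{LG}, $[G_{0,j},L_{m,i}]=m\bigl(j+\tfrac q2\bigr)G_{m,i+j}$, so $G_{0,-q/2}$ centralizes $\cS_{\bar0}$ although $\ad_{G_{0,-q/2}}\neq0$. The second part of Step~6 does not compare images of $L$'s; it uses the images of the odd vectors $G_{1-m,0}$. You need an odd input, as the paper does. The $L$'s give $x_{n,j}=0$ for $n\neq0$. Then $(1-\delta)[x,G_{1,0}]=-Z(G_{1,0})\in\langle L_{0,-q}\rangle$ (note $D_6(G_{1,0})=0$), while $[x,G_{1,0}]=2q\sum_j x_{0,j}L_{1,j}$, which kills the remaining $x_{0,j}$.

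In the even case several auxiliary claims are wrong, and the treatment of $G_{0,-\frac{3q}{2}}$ is not closed.
\begin{itemize}
\item The parity remark proves nothing.
\item $[L_{1,0},G_{-1,j}]=-\bigl(j+\tfrac{3q}{2}\bigr)G_{0,j}$ vanishes at $j=-\tfrac{3q}{2}$, so the pair $(L_{1,0},G_{-1,-\frac{3q}{2}})$ carries no information.
\item $[L_{1,0},G_{0,j}]=-\bigl(j+\tfrac q2\bigr)G_{1,j}$, not $qG_{1,j}$. Also $\ad_{L_{1,0}}$ is not injective on $\cS_{\bar1}$ (it kills $G_{0,-q/2}$ for even $q$), so this pair cannot force $\Delta(G_{0,j})=0$.
\item Step~6 writes every $L_{m,i}$ as $\frac1{2q}[G_{0,0},G_{m,i}]$. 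For $L_{0,-\frac{3q}{2}}$ this involves the unknown $\Psi(G_{0,-\frac{3q}{2}})$, so your transfer does not give $Z(L_{0,-\frac{3q}{2}})=0$. Your closing pair $(G_{0,-\frac{3q}{2}},G_{0,0})$, whose product is exactly $2qL_{0,-\frac{3q}{2}}$, then relates two unknowns.
\end{itemize}

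What does work is your other pair. The identity $[G_{0,-\frac{3q}{2}},G_{1,0}]=2qL_{1,-\frac{3q}{2}}$, with $\Delta(L_{1,-\frac{3q}{2}})=\Delta(G_{1,0})=0$ and the injectivity of $\ad_{G_{1,0}}$ on $\cS_{\bar1}$ (since $[G_{n,k},G_{1,0}]=2qL_{n+1,k}$), gives $\Delta(G_{0,-\frac{3q}{2}})=0$. Then $\Delta$ kills all odd basis vectors, hence all $L_{m,i}$, so $\Delta=0$, and $\Psi=0$ on $\cS_{\bar0}$ gives $Z=0$. Alternatively, follow the paper's order (iii)--(iv), writing $L_{0,-\frac{3q}{2}}$ as a multiple of $[G_{1,0},G_{-1,-\frac{3q}{2}}]$.

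Finally, $G_{0,-\frac{3q}{2}}$ is a basis vector whenever $\frac{3q}{2}\in\mathbb Z$, e.g.\ for $q=\frac23$, not only for $q\in2\mathbb Z^*$. In the even case this injectivity argument is needed for all such $q$.
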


\begin{proof}
\emph{Even part.} Let $\varphi\in\Der_{\delta,\bar0}(\cS(q))$. Then $\varphi$ is a quasi-derivation with $\varphi'=\delta^{-1}\varphi$, so by
Theorems \ref{thm:qnotZ} and \ref{thm:qZ}, $\varphi=\Delta+l\,\id+Z$ with $\Delta\in\Der_{\bar0}(\cS(q))$, $l\in\mathbb C$ and $Z\in\Ann^Q_{\bar0}(\cS(q))$.
For basis vectors $X_a,X_b$, the $\delta$-derivation identity together with the derivation identity for $\Delta$ and the centrality of the values of $Z$ gives
$$c(X_a,X_b)\Big((1-\delta)\Delta(X'_{a+b})+(1-2\delta)l\,X'_{a+b}+Z(X'_{a+b})\Big)=0.$$
Every basis vector $X_c\neq G_{0,-\frac{3q}{2}}$ is a nonzero multiple of a multiplication of two basis vectors: by \eqref{LG} and \eqref{GG},
\[
L_{m,i}=\tfrac{1}{2q}[G_{0,0},G_{m,i}],\quad G_{m,i}=\tfrac{1}{mq}[L_{0,0},G_{m,i}]\ (m\neq0),\quad
G_{0,i}=-\tfrac{1}{i+\frac{3q}{2}}[L_{1,0},G_{-1,i}]\ \bigl(i\neq-\tfrac{3q}2\bigr).
\]
Hence
\begin{equation}\label{psi}
(1-\delta)\Delta(X_c)+(1-2\delta)l\,X_c+Z(X_c)=0\quad\text{for every basis vector } X_c\neq G_{0,-\frac{3q}{2}}.
\end{equation}
(i) Comparing the coefficients of $L_{1,0}$ in the derivation identity for $[L_{0,0},L_{1,0}]=qL_{1,0}$ gives $\Delta(L_{0,0})_{L_{0,0}}=0$.
Since $Z(L_{0,0})\in\langle L_{0,-q}\rangle$ and $q\neq0$, the coefficient of $L_{0,0}$ in \eqref{psi} for $X_c=L_{0,0}$ gives $(1-2\delta)l=0$.
(ii) For odd $X_c\neq G_{0,-\frac{3q}2}$ we have $Z(X_c)=0$, hence $\Delta(X_c)=0$ by \eqref{psi}, as $\delta\neq1$.
(iii) Every $L_{m,i}$ is a nonzero multiple of $[G_{0,0},G_{m,i}]$, or, if $(m,i)=(0,-\frac{3q}2)$, of $[G_{1,0},G_{-1,i}]$; in both cases $\Delta$ kills both factors
by (ii), so $\Delta(L_{m,i})=0$. Then \eqref{psi} gives $Z(L_{m,i})=0$, i.e. $Z=0$.
(iv) Finally, if $j_0=-\frac{3q}2\in\mathbb Z$, applying $\Delta$ to $[G_{0,0},G_{0,j_0}]=2qL_{0,j_0}$ gives $[G_{0,0},\Delta(G_{0,j_0})]=0$;
since $\ad_{G_{0,0}}\colon G_{n,j}\mapsto 2qL_{n,j}$ is injective on $\cS(q)_{\bar1}$, $\Delta(G_{0,j_0})=0$.
Thus $\Delta=0$ and $\varphi=l\,\id$ with $(1-2\delta)l=0$. Conversely, $l\,\id$ is a $\delta$-derivation if and only if $l=2\delta l$.

\emph{Odd part.} Let $\varphi\in\Der_{\delta,\bar1}(\cS(q))$. As above, $\varphi=\ad_x+Z$ with $x\in\cS(q)_{\bar1}$ and $Z\in\Ann^Q_{\bar1}(\cS(q))$, and
$(1-\delta)[x,X_c]+Z(X_c)=0$ for every basis vector $X_c\neq G_{0,-\frac{3q}2}$.
Since $Z$ vanishes on $\cS(q)_{\bar0}$, we get $[x,L_{m,i}]=0$ for all $m,i$. Writing $x=\sum x_{n,j}G_{n,j}$ and using
$[G_{n,j},L_{0,0}]=-nq\,G_{n,j}$ we obtain $x_{n,j}=0$ for $n\neq0$. Next, $(1-\delta)[x,G_{1,0}]=-Z(G_{1,0})\in\langle L_{0,-q}\rangle$, while
$[x,G_{1,0}]=2q\sum_j x_{0,j}L_{1,j}$; hence $x=0$. Therefore $\varphi=Z$ and $Z(X_c)=0$ for all $X_c\neq G_{0,-\frac{3q}2}$.
If $q\notin\mathbb Z$, then $Z=0$. If $q\in\mathbb Z^*$, then $Z$ is determined by $Z(G_{0,-\frac{3q}2})\in\langle L_{0,-q}\rangle$; this vector exists only if
$\frac{3q}{2}\in\mathbb Z$, i.e. $q\in2\mathbb Z^*$, and then $Z\in\langle D_6\rangle$.
Conversely, for $q\in2\mathbb Z^*$ the map $D_6$ vanishes on $[\cS(q),\cS(q)]$ (as shown in the proof of Lemma \ref{lem:outer-are-der}, $G_{0,-\frac{3q}2}$ does not occur
in any multiplication of basis vectors with a nonzero coefficient) and takes values in the center, so both sides of the
$\delta$-derivation identity vanish.
\end{proof}

\begin{corollary}
Let $q\neq0$. Then $\Der_{\frac12}(\cS(q))=\langle\id\rangle$ if $q\notin2\mathbb Z$, and $\Der_{\frac12}(\cS(q))=\langle\id,D_6\rangle$ if $q\in2\mathbb Z^*$.
For $\delta\notin\{0;\frac12;1\}$, $\Der_\delta(\cS(q))=\langle D_6\rangle$ if $q\in 2\mathbb Z^*$, and $\Der_\delta(\cS(q))=0$ otherwise.
\end{corollary}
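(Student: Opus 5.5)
The corollary follows from Theorem~\ref{thm:delta} once a possibly inhomogeneous $\delta$-derivation is split into its even and odd parts. I plan to use the decomposition $\mathfrak{gl}(\cS)=\mathfrak{gl}_{\bar0}(\cS)\oplus\mathfrak{gl}_{\bar1}(\cS)$, which holds because every linear map splits into a parity-preserving and a parity-reversing part. For $\varphi=\varphi_{\bar0}+\varphi_{\bar1}$ and a homogeneous $x$ and $y$, I will project the defining identity
\[
\varphi([x,y])=\delta\big([\varphi(x),y]+(-1)^{\theta|x|}[x,\varphi(y)]\big)
\]
onto $\cS_{|x|+|y|}$ and onto $\cS_{|x|+|y|+\bar1}$. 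Because $\cS$ is $\mathbb Z_2$-graded, this projection separates the identity into the identity for $\varphi_{\bar0}$ and the identity for $\varphi_{\bar1}$. The parity sign is the one appropriate to each homogeneous part, as in the definition of $\Der_\delta(L)=\Der_{\delta,\bar0}\oplus\Der_{\delta,\bar1}$. Hence $\Der_\delta(\cS(q))=\Der_{\delta,\bar0}(\cS(q))\oplus\Der_{\delta,\bar1}(\cS(q))$, and it suffices to read off both summands from Theorem~\ref{thm:delta}.

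For $\delta=\frac12$, Theorem~\ref{thm:delta} gives even part $\langle\id\rangle$. The odd part is $\langle D_6\rangle$ when $q\in2\mathbb Z^*$ and $0$ when $q\notin2\mathbb Z$. Since $q\neq0$, these two cases exhaust all $q$. The direct sum is therefore $\langle\id\rangle$ or $\langle\id,D_6\rangle$, respectively.

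For $\delta\notin\{0,\frac12,1\}$, Theorem~\ref{thm:delta} gives even part $0$, and the odd part is the same as before. This yields $\langle D_6\rangle$ for $q\in2\mathbb Z^*$ and $0$ otherwise.

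The main point to verify is the sign convention in the parity splitting. The paper defines $\delta$-derivations only for homogeneous maps, so the inhomogeneous space $\Der_\delta$ is understood as the direct sum of its homogeneous components, exactly as $\Der(L)$ is defined. With that reading the corollary is immediate, and nothing beyond Theorem~\ref{thm:delta} is needed.
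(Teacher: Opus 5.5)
Your proposal is correct and matches how the paper obtains this corollary: it is read off directly from Theorem~\ref{thm:delta} via $\Der_\delta(\cS(q))=\Der_{\delta,\bar0}(\cS(q))\oplus\Der_{\delta,\bar1}(\cS(q))$, and the two cases $q\in2\mathbb Z^*$ and $q\notin2\mathbb Z$ exhaust $q\neq0$. Your projection argument for inhomogeneous maps is unnecessary, since the paper already defines the full space as this direct sum of homogeneous components; this is the reading you adopt in the end anyway.
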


\begin{theorem}\label{thm:tdp}
Let $q\neq0$ and $\delta\notin\{0;1\}$. Then every graded supercommutative bilinear product $\cdot$ on $\cS(q)$ satisfying \eqref{tdp} is zero.
In particular, $\cS(q)$ does not admit nontrivial transposed $\delta$-Poisson superalgebra structures.
\end{theorem}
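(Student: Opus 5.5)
By Lemma~\ref{lem:Lz}, for every homogeneous $z\in\cS(q)_\theta$ the left multiplication $\mathrm L_z$ is a homogeneous $\delta$-derivation of degree $\theta$. The plan is to combine this with Theorem~\ref{thm:delta}, then use supercommutativity and associativity to rule out the few remaining possibilities.

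\emph{Even part.} For $z\in\cS(q)_{\bar0}$, Theorem~\ref{thm:delta} gives $\mathrm L_z=l(z)\,\id$, and $l(z)$ can be nonzero only if $\delta=\frac12$. If $l(z)\neq0$ for some even $z$, then supercommutativity gives $z\cdot x=x\cdot z$ for every homogeneous $x$. Take $x$ even with $x\neq z$ up to scale. Then
\[
l(z)\,x=z\cdot x=x\cdot z=l(x)\,z .
\]
Choosing $x=L_{m,i}$ linearly independent from $z$ forces $l(z)=0$, a contradiction. Hence $\mathrm L_z=0$ for all even $z$, so $\cS_{\bar0}\cdot\cS=0$. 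By supercommutativity we also get $\cS\cdot\cS_{\bar0}=0$.

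\emph{Odd part.} For $z\in\cS(q)_{\bar1}$, Theorem~\ref{thm:delta} gives $\mathrm L_z\in\langle D_6\rangle$, and $\mathrm L_z=0$ unless $q\in2\mathbb Z^*$. So write $\mathrm L_z=c(z)D_6$ with $c$ a linear functional on $\cS_{\bar1}$. Then $z\cdot G_{0,-3q/2}=c(z)L_{0,-q}$, and $z\cdot x=0$ for every other basis vector $x$. We apply this with $z=G_{0,-3q/2}$ and with $z=G_{m,i}\neq G_{0,-3q/2}$. Super-skew-symmetry for odd elements reads $z\cdot w=-w\cdot z$. For $z=G_{m,i}\ne G_{0,-3q/2}$ this gives
\[
c(G_{m,i})L_{0,-q}=G_{m,i}\cdot G_{0,-3q/2}=-G_{0,-3q/2}\cdot G_{m,i}=0,
\]
the last product vanishing because $G_{m,i}$ is not $G_{0,-3q/2}$. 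Hence $c(G_{m,i})=0$. For $z=G_{0,-3q/2}$ itself, antisymmetry gives $G_{0,-3q/2}\cdot G_{0,-3q/2}=0$, so $c(G_{0,-3q/2})=0$.

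It follows that all products vanish. The only real obstacle is the bookkeeping of signs in supercommutativity: odd$\cdot$odd products are antisymmetric, which is what kills $D_6$ when it is paired with itself. The even case $\delta=\frac12$ is handled by the symmetric argument above. This matches the remark in the introduction that $L_{0,-2q}$, the source of the $\delta$-derivation $D_5$ used for $\cB(q)$, is a commutator in $\cS(q)$.
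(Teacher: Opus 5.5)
Your proof is correct and follows the paper's argument: Lemma~\ref{lem:Lz} and Theorem~\ref{thm:delta} reduce each $\mathrm L_z$ to a multiple of $\id$ (even $z$) or of $D_6$ (odd $z$), and supercommutativity, including $G_{0,-\frac{3q}{2}}\cdot G_{0,-\frac{3q}{2}}=0$, then kills all remaining products. The one difference is in the even step: you compare two linearly independent even vectors via $l(z)x=l(x)z$, whereas the paper pairs an even vector with the odd vector $G_{1,1}$; also, as in the paper, associativity is never used, despite your opening sentence saying it would be.
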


\begin{proof}
By Lemma \ref{lem:Lz} and Theorem \ref{thm:delta}, for every even basis vector $X_c$ there is $l_c$ with $\mathrm L_{X_c}=l_c\,\id$,
and for every odd basis vector $X_c$ the map $\mathrm L_{X_c}$ vanishes on all basis vectors except possibly $G_{0,-\frac{3q}2}$
(it is $0$ if $q\notin2\mathbb Z$ and a multiple of $D_6$ if $q\in2\mathbb Z^*$).
Let $X_c$ be even. Then $l_c\,G_{1,1}=X_c\cdot G_{1,1}=G_{1,1}\cdot X_c=\mathrm L_{G_{1,1}}(X_c)=0$, so $\mathrm L_{X_c}=0$.
Let $X_c$ be odd and $X_d$ a basis vector. If $X_d\neq G_{0,-\frac{3q}{2}}$ then $X_c\cdot X_d=0$. If $X_d=G_{0,-\frac{3q}{2}}$ and $X_c\neq X_d$,
then $X_c\cdot X_d=-X_d\cdot X_c=0$. Finally $X_d\cdot X_d=-X_d\cdot X_d$ gives $X_d\cdot X_d=0$. Hence the product is zero.
\end{proof}

\begin{remark}
Theorem \ref{thm:tdp} does not use the associativity of the product.
It should be compared with the Block Lie algebra $\cB(q)$, $q\in\mathbb Z$, which admits (up to isomorphism) exactly one nontrivial transposed $\delta$-Poisson structure
$L_{0,-2q}\cdot L_{0,-2q}=L_{0,-q}$. In $\cS(q)$ the vector $L_{0,-2q}$ lies in the derived superalgebra, so this structure does not extend.
The new odd $\delta$-derivation $D_6$ does not produce a structure either, because of supercommutativity: 
\begin{center}
$G_{0,-\frac{3q}{2}}\cdot G_{0,-\frac{3q}{2}}=0$.
\end{center}
\end{remark}

\end{document}